\theoremstyle{thmstyleone}%
\newtheorem{theorem}{Theorem}
\newtheorem{proposition}[theorem]{Proposition}%
\newtheorem{lemma}[theorem]{Lemma}
\newtheorem{corollary}[theorem]{Corollary}
\theoremstyle{thmstyletwo}%
\newtheorem{example}{Example}%
\newtheorem*{remark*}{Remark}%
\theoremstyle{thmstylethree}%
\newtheorem{definition}{Definition}%
\newtheorem{notation}{Notation}%
\newif\ifmytrue
\newcommand*{\tensor}[1]{\mathbf{#1}}
\renewcommand*{\matrix}[1]{\mathbf{#1}}
\renewcommand*{\vector}[1]{\bm{#1}}
\newcommand{\hatcirc}{\mathbin{\hat{\circ}}}
\newcommand\notsotiny{\@setfontsize\notsotiny\@vipt\@viipt}
\DeclareFontFamily{U}{mathx}{\hyphenchar\font45}
\DeclareFontShape{U}{mathx}{m}{n}{
      <5> <6> <7> <8> <9> <10>
      <10.95> <12> <14.4> <17.28> <20.74> <24.88>
      mathx10
      }{}
\DeclareSymbolFont{mathx}{U}{mathx}{m}{n}
\DeclareMathSymbol{\bigtimes}{1}{mathx}{"91}
\newdimen\myshiftaligndimen
\begin{document}

\title[Derivation of RK Order Conditions via Functional TTNs]{Derivation of Runge--Kutta Order Conditions via Functional Tree Tensor Networks}

\author{Junyuan He}
\address{SKLMS, Academy of Mathematics and Systems Science, Chinese Academy of Sciences; School of Mathematical Sciences, University of Chinese Academy of Sciences}
\curraddr{55 Zhongguancun East Rd., Beijing, 100190, PR China}
\email{hejunyuan@lsec.cc.ac.cn}

\author{Zhonghao Sun}
\address{SKLMS, Academy of Mathematics and Systems Science, Chinese Academy of Sciences; School of Mathematical Sciences, University of Chinese Academy of Sciences}
\curraddr{55 Zhongguancun East Rd., Beijing, 100190, PR China}
\email{sunzhonghao@amss.ac.cn}

\author{Jizu Huang}
\address{SKLMS, Academy of Mathematics and Systems Science, Chinese Academy of Sciences; School of Mathematical Sciences, University of Chinese Academy of Sciences}
\curraddr{55 Zhongguancun East Rd., Beijing, 100190, PR China}
\email{huangjz@lsec.cc.ac.cn}
\thanks{}

\subjclass[2010]{65L06, 46M05, 15A69}

\date{}

\dedicatory{}

\begin{abstract}
Tree tensor networks (TTNs) provide a compact and structured representation of high-dimensional data, making them valuable in various areas of computational mathematics and physics. In this paper, we present a rigorous mathematical framework for expressing high-order derivatives of functional TTNs, both with or without constraints. Our framework decomposes the total derivative of a given TTN into a summation of TTNs, each corresponding to the partial derivatives of the original TTN. Using this decomposition, we derive the Taylor expansion of vector-valued functions subject to ordinary differential equation constraints or algebraic constraints imposed by Runge--Kutta (RK) methods. As a concrete application, we employ this framework to construct order conditions for RK methods. Due to the intrinsic tensor properties of partial derivatives and the separable tensor structure in RK methods, the Taylor expansion of numerical solutions can be obtained in a manner analogous to that of exact solutions using tensor operators. This enables the order conditions of RK methods to be established by directly comparing the Taylor expansions of the exact and numerical solutions, eliminating the need for mathematical induction. For a given function $\vector{f}$, we derive sharper order conditions that go beyond the classical ones, enabling the identification of situations where a standard RK scheme of order {\it p} achieves unexpectedly higher convergence order for the particular function. These results establish new connections between tensor network theory and classical numerical methods, potentially opening new avenues for both analytical exploration and practical computation.
\end{abstract}

\maketitle


\mytruetrue

\section{Introduction}


Tensor-valued functions provide a natural generalization of scalar and vector functions, mapping between tensor spaces in both their domain and range. As fundamental elements in modern computational mathematics, they enable tractable computation in high-dimensional spaces through their inherent structural properties. Their ability of tensor functions to efficiently represent and manipulate complex multidimensional data has established them as indispensable mathematical tools with wide-ranging applications, such as machine learning \cite{ji2019survey,liu2019machine, liu2023tensor,stoudenmire2016supervised}, numerical analysis \cite{bachmayr2023low, grasedyck2013literature,hackbusch2012tensor}, and scientific computing \cite{hauschild2018efficient, khoromskij2012tensors,schroder2019tensor}. Furthermore, tensor functions have demonstrated remarkable efficacy in modeling complex physical phenomena where traditional methods face limitations, particularly in quantum many-body systems \cite{markov2008simulating,orus2014practical}, computational fluid dynamics \cite{gourianov2022exploiting,gourianov2024tensor}, and multidimensional signal processing \cite{acar2008unsupervised}.

To address the challenges of  high-dimensional tensor representation, various low rank tensor decomposition methods have been developed over recent decades, including CANDECOMP/PARAFAC (CP) \cite{kiers2000towards}, Tucker \cite{tucker1966some}, tensor-train (TT) \cite{oseledets2011tensor} and other tensor networks \cite{shi2006classical}. While originally developed for discrete settings, these decomposition techniques admit natural extensions to continuous function spaces \cite{khoromskij2015tensor}. The CP decomposition factorizes a tensor into a sum of rank-one components, providing a compact representation through separable terms. Tucker decomposition represents tensors via a core tensor and factor matrices, offering effective dimensionality reduction capabilities \cite{cichocki2016tensor}. 
TT decomposition parameterizes multilinear operations through sequential low rank matrix products \cite{oseledets2011tensor}, significantly reducing memory requirements and enabling efficient computations \cite{dolgov2012superfast}. Meanwhile, tensor networks, inspired by quantum many-body physics \cite{orus2019tensor,shi2006classical}, employ interconnected tensor cores in structured network topologies to capture complex, high-order interactions.
The remarkable efficiency of low-rank tensor representations has inspired extensive research into their integration with neural network architectures for deep learning applications \cite{wang2023tensor}. Notable examples include CP-CNN \cite{lebedev2015speeding}, Tucker-CNN \cite{phan2020stable}, TT-CNN \cite{novikov2015tensorizing}, TT-RNN \cite{yang2017tensor}, Conv-TT-LSTM \cite{su2020convolutional}, BTT-Transformer \cite{ma2019tensorized}, PMO-Transformer \cite{liu2021enabling}, TGNNS \cite{hua2022high}, etc. These tensor-based architectures leverage the inherent compressibility of tensor decompositions to achieve parameter efficiency while preserving model expressiveness.
This paradigm has demonstrated success across diverse applications, including neural network compression, multimodal information fusion, and quantum circuit simulation.


Low rank tensor functions have emerged as powerful tools across diverse applications, typically represented as sums of lower-order tensor functions. However, computing their derivatives, particularly for higher orders, presents significant computational challenges. This difficulty is particularly pronounced when dealing with constrained vector-valued functions, such as those governed by ordinary differential equation (ODE) constraints or algebraic constraints imposed by Runge--Kutta (RK) methods. The representation of high-order derivatives for vector-valued functions proves crucial in establishing order conditions for RK methods. The evolution of RK methods has been closely tied to the derivation of these conditions, which provide necessary and sufficient criteria for guaranteeing numerical accuracy. Carl Runge \cite{runge1895numerische} and Martin Kutta \cite{kutta1901beitrag} laid the groundwork for iterative integration processes, but it was the formalization of order conditions that enabled these methods to achieve reliable accuracy in solving ODEs. In the 1960s, Butcher introduced algebraic structures, such as Butcher trees, to systematically derive and analyze order conditions for both explicit and implicit RK methods \cite{butcher1963coefficients,butcher2016numerical}. These developments facilitated the development of high-order schemes, implicit schemes for stiff problems, and embedded RK pairs for adaptive time stepping, solidifying RK methods as indispensable tools for numerically solving ODEs.

Building upon Butcher's tree-based framework, researchers proposed several alternative approaches for deriving order conditions. The B-series formalism emerged as a powerful analytical tool, enabling systematic comparisons between the Taylor expansions of exact ODE solutions and their numerical approximations \cite{butcher1963coefficients,butcher2021b,chartier2010algebraic,hairer1987solving}. This foundational work has been extended and adapted for analyzing partitioned RK methods \cite{hairer1981order, murua1997order}, exponential integrators \cite{berland2005b}, stochastic RK methods \cite{burrage2000order}, etc. Lie-Butcher theory introduced another alternative perspective, constructing order conditions through vector field commutators \cite{munthe1995lie,munthe1998runge}. This approach has proven particularly valuable in structure-preserving geometric integration \cite{bruls2010use,munthe1999computations} and high-order symplectic algorithms \cite{munthe1999high}. Most recently, innovations in nonlinearly partitioned RK methods \cite{buvoli2024new,tran2024order} have motivated the search for simpler and more universal ways to establish order conditions.

In this work, we propose a novel framework based on tree tensor networks (TTNs) \cite{bachmayr2016tensor} to efficiently compute and represent high-order derivatives of constrained vector-valued functions. This framework enables the systematic derivation of order conditions for RK methods through the following key innovations:
\noindent\begin{enumerate}
\item {\bf Recursive derivative decomposition:} The $k^{\mathrm{th}}$-order total derivative of constrained vector-valued functions is decomposed into a sum of $k^{\mathrm{th}}$-order partial derivatives, each computed by differentiating the corresponding tensor cores of the $(k-1)^{\mathrm{th}}$-order partial derivative.
\item {\bf Diagrammatic representation and tensor calculus:} The resulting partial derivatives admit a natural representation as TTN diagrams, which share topological similarities to Butcher trees but differ fundamentally in their mathematical nature. Unlike Butcher’s abstract tree structures, our TTN framework operates on concrete partial derivatives with well-defined tensor operations.
\item {\bf Tree-derivative correspondence:} The framework establishes a fundamental connection between derivative operations and tree growth patterns. We present two distinct but equivalent perspectives: 

(i) Layer-wise growth from the root, where derivatives are taken as freely as possible before substituting constraints into the leaves. 

(ii) Leaf-wise expansion, where derivative is taken one at a time, immediately incorporating constraints into the new leaf.
\item {\bf Direct proof of order conditions:} Leveraging the tensor structure of TTNs, we achieve a constructive proof of order conditions without relying on mathematical induction. The terms in the Taylor expansion of numerical solutions decomposes naturally into: 
a) a contraction of method-dependent TTNs, determined by the RK tableau; and
b) a universal component identical to terms in the exact solution's expansion. This decomposition enables direct comparison of exact and numerical expansions, deriving order conditions through explicit tensor matching rather than inductive reasoning. 

\item {\bf Super convergence of an RK scheme for a given $\vector{f}$:} The classical order conditions $\gamma(T)\phi(T)=1,$ $\forall |T|\leq p$ are necessary for an RK scheme to achieve uniform order $p$ (i.e., convergence of order $p$ for all admissible $\vector{f}$). However, for a specific choice of $\vector{f}$, these conditions may not be necessary. By utilizing the TTN-based framework, we derive a refined set of order conditions tailored to each $\vector{f}$, as presented in \Cref{theorem:ordercondition-a}. These conditions enable us to identify situations where a standard RK scheme of order $p$ may exhibit a higher (superior) convergence order for that particular function.     
\end{enumerate}
A detailed comparison between our TTN-based method and Butcher's tree-based method is provided in \autoref{subsec:Comparison of our framework and Butcher's method}.

The remainder of the article is organized as follows. Section 2 introduces (functional) tensor networks, including fundamental tensor notations and operations, and presents our framework for computing derivatives of TTNs, both with and without constraints. Utilizing our TTNs derivative framework, the Taylor expansion of vector-valued functions subject to ODE constraints and satisfying constraints imposed by RK method, are explored in Section 3 and Section 4, respectively.  Section 5 applies the proposed framework to derive order conditions for RK methods. Section 6 concludes the article with a summary of key findings.

\section{Functional tree tensor networks and their derivatives}

In this section, we present a framework for computing the derivatives of functional TTNs. We begin by a brief review of (functional) TTNs, and then propose a framework to compute its derivatives either without or with constraints. The functional TTNs and its derivatives are utilized to construct order conditions of RK methods in \Cref{sec:order-condition-RK}.

\subsection{Preliminary operations on tensors}

We primarily introduce Kronecker products, tensor contractions, and symmetry of tensors, which are used throughout this paper. For additional tensor operations, we refer the reader to \cite{lee2018fundamental}.

\subsubsection{Kronecker products}

For $\matrix{A} = (a_{ij}) \in \mathbb{R}^{m \times n}$ and $\matrix{B} = (b_{ij}) \in \mathbb{R}^{p \times q}$, the Kronecker product of $\matrix{A}$ and $\matrix{B}$ results a matrix $\matrix{C} := \matrix{A} \otimes \matrix{B} \in \mathbb{R}^{mp \times nq}$, which can be written as
\begin{equation*}
    \matrix{C} = \begin{bmatrix}
        a_{11} \matrix{B} & \cdots & a_{1n} \matrix{B} \\
        \vdots & & \vdots \\
        a_{m1} \matrix{B} & \cdots & a_{mn} \matrix{B}
    \end{bmatrix}.
\end{equation*}
In order to extend the Kronecker product to higher order tensors, we first introduce some necessary notations. The $(i_1,\, \dots,\, i_N)^\mathrm{th}$ element of a tensor $\tensor{A}$ with size $I_1 \times \cdots \times I_N$ is denoted as $\tensor{A}[i_1,\, \dots,\, i_N]$. The multi-index  $\overline{i_1 i_2 \dots i_N}$ is defined as $i_N + (i_{N-1} - 1) I_N + (i_{N-2} - 1) I_N I_{N-1} + \cdots + (i_1 - 1) I_N I_{N-1} \cdots I_2$, where $i_n = 1,2,\, \dots,\, I_n$ for each $n = 1,2,\,\dots,\, N$. Then, the Kronecker product of tensors $\tensor{A} \in \mathbb{R}^{I_1 \times I_2 \times \cdots \times I_N}$ and $\tensor{B} \in \mathbb{R}^{J_1 \times J_2 \times \cdots \times J_N}$ is defined as $\matrix{C}=\matrix{A} \otimes \matrix{B}$ with the $(\overline{i_1 j_1},\, \dots,\, \overline{i_N j_N})^{\mathrm{th}}$ element given by
\begin{equation*}
    \tensor{C}[\overline{i_1 j_1},\, \dots,\, \overline{i_N j_N}] := (\tensor{A} \otimes \tensor{B})[\overline{i_1 j_1},\, \dots,\, \overline{i_N j_N}] = \tensor{A}[i_1,\, \dots,\, i_N] \tensor{B}[j_1,\, \dots,\, j_N].
\end{equation*}

\subsubsection{Contraction}

Contraction, or contracted product on tensors, is a natural extension of the matrix product. Unless otherwise specified, the contracted product usually refers to the product of the last mode of the first tensor and the first mode of the second tensor. This is mathematically expressed by the following definition.

\begin{definition} \label{def:contraction}
    Assume two tensors $\tensor{A} \in \mathbb{R}^{I_1 \times I_2 \times \cdots \times I_M}$, $\tensor{B} \in \mathbb{R}^{J_1 \times J_2 \times \cdots \times J_N}$ with $I_M = J_1$, and $M,\,N\geq 1$. The contracted product (or more precisely $(M,1)$-contracted product) is defined by
    \begin{equation*}
        \tensor{C} := \tensor{A} \times^1 \tensor{B} \in \mathbb{R}^{I_1 \times \cdots \times I_{M-1} \times J_2 \times \cdots \times J_N}
    \end{equation*}
    with elements
    \begin{equation*}
        \tensor{C}[i_1,\, \dots,\, i_{M-1},\, j_2,\, \dots,\, j_{N}] = \sum_{i_M = 1}^{I_M} \tensor{A}[i_1,\, \dots,\, i_{M-1},\, i_M]\, \tensor{B}[i_M,\, j_2,\, \dots,\, j_N].
    \end{equation*}
\end{definition}

\noindent The matrix product is a special example of contracted product on tensors. For simplicity, $\tensor{A} \tensor{B}$ will be written instead of $\tensor{A} \times^1 \tensor{B}$ in the rest of this paper.

\begin{notation} \label{notation:contraction}
Denote the contraction of the last mode of tensor $\tensor{A}$ and the first mode of tensor $\tensor{B}$ as $\tensor{A} \times^1 \tensor{B}:=\tensor{A} \tensor{B}$. In general, the contraction is always computed from left to right, i.e., $\tensor{A}\tensor{B}\tensor{C}:=(\tensor{A}\tensor{B})\tensor{C}$.
\end{notation}

A useful property, known as the mixed product property, is presented in \cite{pollock2013kronecker, ragnarsson2012structured}. We state this property as the following lemma. 

\begin{lemma}\label{lemma:mixed-product-property}
    If $\tensor{A}$, $\tensor{B}$, $\tensor{C}$, and $\tensor{D}$ are tensors which can form contracted products $\tensor{A} \tensor{C}$ and $\tensor{B} \tensor{D}$, then 
    \begin{equation*}
        (\tensor{A} \otimes \tensor{B}) (\tensor{C} \otimes \tensor{D}) = (\tensor{A} \tensor{C}) \otimes (\tensor{B} \tensor{D}).
    \end{equation*}
\end{lemma}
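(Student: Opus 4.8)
The plan is to verify the identity entrywise, reducing it to the factorization of a double sum, exactly as one proves the matrix-level mixed product property but now tracking multi-indices. Suppose $\tensor{A}$ and $\tensor{B}$ both have order $N$, with $\tensor{A}\in\mathbb{R}^{I_1\times\cdots\times I_N}$ and $\tensor{B}\in\mathbb{R}^{J_1\times\cdots\times J_N}$, and that $\tensor{C},\tensor{D}$ both have order $P$ with first modes matching the last modes of $\tensor{A},\tensor{B}$ respectively (so that $\tensor{A}\tensor{C}$ and $\tensor{B}\tensor{D}$ are well defined); then $\tensor{A}\otimes\tensor{B}$ has $n$-th mode of size $I_nJ_n$ and $\tensor{C}\otimes\tensor{D}$ has first mode of size $I_NJ_N$, so the left-hand contracted product makes sense and both sides have matching shape.

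First I would fix arbitrary multi-indices for a generic entry of the left-hand side, namely the entry indexed by $\overline{i_1j_1},\dots,\overline{i_{N-1}j_{N-1}}$ from $\tensor{A}\otimes\tensor{B}$ and by $\overline{k_2l_2},\dots,\overline{k_Pl_P}$ from $\tensor{C}\otimes\tensor{D}$. Applying the definition of the contracted product, this entry equals $\sum_{m=1}^{I_NJ_N}(\tensor{A}\otimes\tensor{B})[\,\overline{i_1j_1},\dots,\overline{i_{N-1}j_{N-1}},m\,]\,(\tensor{C}\otimes\tensor{D})[\,m,\overline{k_2l_2},\dots,\overline{k_Pl_P}\,]$. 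The key bookkeeping step is to rewrite the summation index $m\in\{1,\dots,I_NJ_N\}$ as a multi-index $m=\overline{i_Nj_N}$ with $i_N\in\{1,\dots,I_N\}$, $j_N\in\{1,\dots,J_N\}$: by the definition of the multi-index this map is a bijection, so $\sum_{m=1}^{I_NJ_N}$ may be replaced by the double sum $\sum_{i_N=1}^{I_N}\sum_{j_N=1}^{J_N}$.

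Then I would substitute the definition of the Kronecker product to split each factor, obtaining a sum of products $\tensor{A}[i_1,\dots,i_N]\,\tensor{B}[j_1,\dots,j_N]\,\tensor{C}[i_N,k_2,\dots,k_P]\,\tensor{D}[j_N,l_2,\dots,l_P]$; since the $i_N$-dependent factors and the $j_N$-dependent factors separate, the double sum factors as $\bigl(\sum_{i_N}\tensor{A}[\cdots]\tensor{C}[i_N,\cdots]\bigr)\bigl(\sum_{j_N}\tensor{B}[\cdots]\tensor{D}[j_N,\cdots]\bigr)$, which by the definition of the contracted product is precisely $(\tensor{A}\tensor{C})[i_1,\dots,i_{N-1},k_2,\dots,k_P]\,(\tensor{B}\tensor{D})[j_1,\dots,j_{N-1},l_2,\dots,l_P]$. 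Recognizing this last product, via the definition of the Kronecker product, as the $(\overline{i_1j_1},\dots,\overline{i_{N-1}j_{N-1}},\overline{k_2l_2},\dots,\overline{k_Pl_P})$ entry of $(\tensor{A}\tensor{C})\otimes(\tensor{B}\tensor{D})$ closes the argument. The only genuinely delicate point is the index-regrouping in the middle step — confirming that the single contraction sum over the combined dimension $I_NJ_N$ is the same as the iterated sum over the two factor dimensions; everything else is a direct application of the definitions already given, together with the mixed-product intuition. (One could alternatively phrase the whole proof as an instance of \Cref{lemma:mixed-product-property} at the matrix level after an appropriate mode unfolding, but the direct entrywise computation is cleaner and self-contained.)
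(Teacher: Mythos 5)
Your proof is correct. Note that the paper itself does not prove this lemma at all --- it states it as a known property and cites \cite{pollock2013kronecker, ragnarsson2012structured} --- so there is no in-paper argument to compare against; your direct entrywise verification is the standard, self-contained way to establish it and is consistent with the paper's definitions of the multi-index, the tensor Kronecker product, and the $(M,1)$-contracted product. You correctly isolate the one delicate step: decoding the shared contraction index $m\in\{1,\dots,I_NJ_N\}$ as the pair $(i_N,j_N)$ must give the \emph{same} pair whether $m$ is read as the last multi-index of $\tensor{A}\otimes\tensor{B}$ or as the first multi-index of $\tensor{C}\otimes\tensor{D}$; this holds precisely because the compatibility hypotheses force the first mode of $\tensor{C}$ to have size $I_N$ and the first mode of $\tensor{D}$ to have size $J_N$, so both decodings use the same inner dimension $J_N$. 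After that, the factorization of the double sum into a product of two single sums is immediate since the summand separates. Two minor remarks: your argument tacitly assumes $\tensor{A},\tensor{B}$ have equal order and $\tensor{C},\tensor{D}$ have equal order, but that is exactly what the paper's definition of the tensor Kronecker product already requires, so nothing is lost; and the closing parenthetical about deducing the result ``as an instance of the lemma at the matrix level'' reads as circular as phrased --- you mean the classical matrix mixed-product property after unfolding --- but since it is offered only as an aside and your main argument does not rely on it, it does not affect the validity of the proof.
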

This property transforms the contraction of Kronecker product into the Kronecker product of contractions, which is useful in computing decomposition and contraction of tensors in the remainder of our paper.

\subsubsection{Symmetry of tensors}
A tensor exhibits symmetry when its elements remain invariant under index permutations. A tensor is called {\it cubical} if all its modes have the same dimension, i.e., $\tensor{A}\in\mathbb{R}^{I\times I \times \cdots \times I}$. A cubical tensor $\tensor{A}$ of order $d$ is said to be symmetric if its elements remain unchanged under any permutation of indices \cite{comon2008symmetric}. This property is formally defined as follows. 
\begin{definition}
    Suppose $\tensor{A} \in \mathbb{R}^{I \times I \times \cdots \times I}$ is a $d^\mathrm{th}$ order tensor. $\tensor{A}$ is called symmetric if, for any permutation $\pi$ of $\{ 1, \, \dots, \, d \}$, 
    \begin{equation*}
        A[i_1,\, i_2,\, \dots,\, i_d] = A[i_{\pi(1)},\, i_{\pi(2)},\, \dots,\, i_{\pi(d)}].
    \end{equation*}
\end{definition}
More generally, tensors can exhibit \emph{partial symmetry}, meaning they remain invariant under permutations of a specific subset of indices. For example, a tensor of order $d$ can be symmetric in the last $(d-1)^{\mathrm{th}}$ indices but not in all indices.

High-order derivatives of a multivariate function $f: \mathbb{R}^n \to \mathbb{R}$ can be naturally represented as cubical tensors. Specifically, the $d^\mathrm{th}$ order derivative of $f \in C^{d}(\mathbb{R}^n)$ at a point $\vector{x}:=(x_1,\,\ldots,\,x_n)^{\mathrm T}$ is a $d^\mathrm{th}$ order cubical tensor, denoted as $f^{(d)}(\vector{x}) := {\mathcal D}^{d} f (\vector{x})\in\mathbb{R}^{n\times n\times\cdots \times n}$, whose components are given by
\begin{equation*}
     f^{(d)}[i_1,\,i_2,\,\dots,\,i_d](\vector{x}) = \frac{\partial^d f(\vector{x})}{\partial x_{i_1} \partial x_{i_2} \cdots \partial x_{i_d}}, \quad 1 \leq i_1,\,i_2,\,\dots,\,i_d \leq n.
\end{equation*}
By the commutativity of mixed partial derivatives, the tensor $f^{(d)}(\vector{x})$ is symmetric. 
Throughout this article, we frequently consider high-order derivatives of a vector-valued multivariate function $\vector{f}: \mathbb{R}^n \to \mathbb{R}^m$. For $\vector{f}$ with elements being $C^{d}(\mathbb{R}^n)$ smoothness, its $d^\mathrm{th}$ order derivative at $\vector{x}$ is a $(d+1)^{\mathrm{th}}$ order tensor $\vector{f}^{(d)}(\vector{x})\in\mathbb{R}^{m\times n\times\cdots \times n}$ with elements given by
\begin{equation*}
    \vector{f}^{(d)}[i,i_1,\,i_2,\,\dots,\,i_d](\vector{x}) = \frac{\partial^d \vector{f}[i](\vector{x})}{\partial x_{i_1} \partial x_{i_2} \cdots \partial x_{i_d}}, \quad 1 \leq i \leq m,\; 1 \leq i_1,\,i_2,\,\dots,\,i_d \leq n,
\end{equation*}
which exhibits partial symmetry in the last $d$ indices.

For tensors with symmetry or partial symmetry, contractions over symmetric indices commute. For example, for a symmetric matrix $\matrix{H} \in \mathbb{R}^{n\times n}$, it holds that $\vector{x}^\top \matrix{H} \vector{y} = \vector{y}^\top \matrix{H} \vector{x}$ for any two vectors $\vector{x}, \vector{y} \in \mathbb{R}^n$. More generally, according to \Cref{notation:contraction}, for vectors $\vector{v}_1,\, \dots, \,\vector{v}_d $ $\in \mathbb{R}^n$ and any permutation $\pi$ of $\{1,\,\dots,\, d\}$, we have 
\begin{equation}
\label{symmetryproperty}
    \vector{f}^{(d)}(\vector{x})\times^1 \vector{v}_1 \times^1\cdots\times^1 \vector{v}_d:=\vector{f}^{(d)}(\vector{x}) \vector{v}_1 \cdots \vector{v}_d = \vector{f}^{(d)}(\vector{x}) \vector{v}_{\pi(1)} \cdots \vector{v}_{\pi(d)},
\end{equation}
where $\vector{f}^{(d)}(\vector{x}) \vector{v}_1 \cdots \vector{v}_d \in\mathbb{R}^m$.
The property \eqref{symmetryproperty}, which allows us to contract the tensor $ \vector{f}^{(d)}(\vector{x})$ with vectors in any order, will be utilized in \Cref{sec:3}. Occasionally, we simplify the notation for $\vector{f}^{(d)}(\vector{x}) \vector{v}_1\cdots \vector{v}_d$ as follows:
\begin{equation}
\label{symmetryvectorpower}
    \vector{f}^{(d)}(\vector{x})\times^1 \vector{v}_1 \times^1\cdots\times^1 \vector{v}_d:=\vector{f}^{(d)}(\vector{x}) \vector{v}_1 \cdots \vector{v}_d: = \vector{f}^{(d)}(\vector{x}) \prod_{i=1}^d\vector{v}_i.
\end{equation}

\subsection{Tree tensor networks}

TTNs were first used by physicists. They were originally introduced in \cite{wang2003multilayer} to construct the multilayer multiconfiguration time-dependent Hartree theory of chemical physics. However, it wasn't until \cite{shi2006classical} the term ``tree tensor network'' was formally coined. Subsequently, it gained significant popularity in quantum systems \cite{murg2010simulating, nakatani2013efficient}. In \cite{bachmayr2023low, hackbusch2012tensor}, the low-rank approximations and the corresponding decomposition algorithms of binary trees, known as hierarchical tensors, were proposed. TTNs are a powerful and versatile tool for representing the decomposition of multivariate functions (tensors) into nested summations over contraction indices. We will present the basic concepts of TTNs following \cite{bachmayr2016tensor} and refer readers to \cite{bachmayr2016tensor,ceruti2021time,falco2015geometric,falco2021tree} for additional properties of TTNs.

Denote $i_\mu = 1,\,2,\,\ldots,\, n_\mu$ as the physical indices and $k_\nu=1,\,2,\,\ldots,\, r_\nu$ as the contraction indices. For a $d^\mathrm{th}$ order tensor $\tensor{U}$, we can define a multilinear parametrization of $\tensor{U}$ that separates the physical indices as the following form \cite{bachmayr2016tensor}:
\begin{equation}\label{eqn:multilinear-parameterization}
    \tensor{U}[i_1,\, \dots,\, i_d] = \sum_{k_1 = 1}^{r_1} \cdots \sum_{k_E = 1}^{r_E} \prod_{\alpha = 1}^{V} \tensor{C}_{\alpha}[i_1,\,\dots,\,i_d,\, k_1,\, \dots,\, k_E],
\end{equation}
where each component $\tensor{C}_\alpha$ potentially depends on all physical indices $i_1,\, \dots,\, i_d$ and contraction indices $k_1,\, \dots,\, k_E$. In \eqref{eqn:multilinear-parameterization}, $V$ is called the number of components. In the case of low-rank, $\tensor{C}_\alpha$ usually does not depend on all indices. If $\tensor{C}_\alpha$ does or does not depend on a certain index $i_\mu$ or $k_\nu$, then the index is called an \emph{active} or \emph{inactive index}.

\begin{definition}[{\cite[Definition 2.2]{bachmayr2016tensor}}] A \emph{tensor network} is defined as a particular type of multilinear parameterization where: 
\begin{enumerate}[(i), nosep]
    \item Each physical index $i_\mu$ is active in exactly one component $\tensor{C}_\alpha$;
    \item Each contraction index $k_\nu$ is active in precisely two components $\tensor{C}_{\alpha_1}$ and $\tensor{C}_{\alpha_2}$.
\end{enumerate}
\end{definition}

For a clearer description of contraction, a graph is introduced to present a tensor network \cite{bachmayr2016tensor}. For example, the tensor network $\tensor{U}$ defined in \eqref{eqn:multilinear-parameterization} is represented by a graph with vertices $\alpha=1,\,\ldots,\, V$, corresponding to components $\tensor{C}_\alpha$. These vertices are connected by edges $\nu=1,\,\ldots,\, E$. Each edge represents a summation over the corresponding contraction variable $k_\nu$. If a physical index $i_\mu$, $\mu=1,\,\ldots,\,d$ is active in component $\tensor{C}_\alpha$, an additional open edge is connected to the corresponding vertex. Thus, the number of open edges in the graph determines the order of tensor $\tensor{U}$. This kind of graphical representation is called tensor network diagrams \cite{cichocki2016tensor, holtz2012alternating, lee2018fundamental}.  Similar diagrammatic representations, such as Feynman and Goldstone diagrams \cite{goldstone1957derivation, mattuck1992guide}, are also commonly used in quantum physics to track summations. In \Cref{fig:tree-tensor-network-diagram}, some simple examples of tensor network diagrams are displayed to represent the contraction of tensors. 

\begin{figure}[htb]
    \centering
\ifmytrue
    $\vector{x}=\begin{forest}
    midtree
       [,fill=none  [,label=right:$\vector{x}$
        ] ]
    \end{forest}$
    \quad
    $\vector{x}\cdot \vector{x}=\begin{forest}
    midtree
       [,label=right:$\vector{x}$  [,label=right:$\vector{x}$
        ] ]
    \end{forest}$
    \quad
    $\matrix{A}\vector{x}=
    \begin{forest}
    midtree
       [,fill=none  [,label=right:$\matrix{A}$
            [,label=right:$\vector{x}$]
        ] ]
    \end{forest}$
    \quad
    $\vector{y}^\top \matrix{A}\vector{x} =\begin{forest}
    midtree
       [,label=right:$\vector{y}$ [,label=right:$\matrix{A}$
            [,label=right:$\vector{x}$]
        ] ]
    \end{forest}$
    \quad 
    $\tensor{B}=
        \begin{forest}
    midtree
       [,fill=none  [,label=right:$\tensor{B}$
            [,fill=none]
            [,fill=none]
        ] ]
    \end{forest}  $ 
    \quad 
    $ \tensor{B}\vector{x}= 
        \begin{forest}
    midtree
       [,fill=none  [,label=right:$\tensor{B}$
            [,fill=none]
            [,label=above:$\vector{x}$]
        ] ]
    \end{forest} \in\mathbb{R}^{n_2\times n_3}$  
    \quad
    $(\tensor{B}\vector{x})(\matrix{A}\vector{x}) =  
        \begin{forest}
    midtree
       [,fill=none  [,label=right:$\tensor{B}$
            [,label=right:$\matrix{A}$ [,label=above:$\vector{x}$]]
            [,label=above:$\vector{x}$]
        ] ]
    \end{forest}\in\mathbb{R}^{ n_3}$  
    \quad 
    $ \vector{z}^\top (\tensor{B} \vector{x})(\matrix{A} \vector{x})=
        \begin{forest}
    midtree
       [,label=right:$\vector{z}$  [,label=right:$\tensor{B}$
            [,label=right:$\matrix{A}$ [,label=above:$\vector{x}$]]
            [,label=above:$\vector{x}$]
        ] ]
    \end{forest}$ 
\fi
    \caption{The tensor network diagram. Here $\vector{x}\in\mathbb{R}^{n_1}$, $\vector{y}\in\mathbb{R}^{n_2}$, $\vector{z}\in\mathbb{R}^{n_3}$, $\matrix{A} \in \mathbb{R}^{n_2\times n_1}$, and $\tensor{B} \in \mathbb{R}^{n_3\times n_2\times n_1}$, respectively. The notation of contracted products follow \Cref{notation:contraction}, i.e., $\tensor{B} \vector{x}:=\tensor{B}\times^1 \vector{x}$. }
    \label{fig:tree-tensor-network-diagram}
\end{figure}

A tensor network is referred to as a \emph{tree tensor network} (TTN) if its graph structure is a tree, meaning it contains no loops or cycles \cite{bachmayr2016tensor}. The widely used tensor networks, i.e., the Tucker, hierarchical Tucker, and TT formats all are TTNs. By assigning a root to the tree, a TTN can be viewed as a multilevel Tucker tensor \cite{ceruti2021time} or tree-based Tucker tensor \cite{falco2021tree}. Without loss of generality, we always assign the component $\tensor{C}_1$ as the root for the TTN defined by \eqref{eqn:multilinear-parameterization}. For notational convenience, we remove all inactive indices from the indices of components. Denote the set of active contraction indices and active physical indices in component $\tensor{C}_\alpha$ as $\mathbb{E}_\alpha$ and $\mathbb{E}_\alpha^o$, respectively. Let us denote $\mathbb{E}=\cup_{\alpha}\mathbb{E}_\alpha$ and $\mathbb{E}^o=\cup_{\alpha}\mathbb{E}^o_\alpha$. Using these notations, TTNs can be written in a more compact way. A TTN of a scalar $u$ dose not include any open active index and can be written as
\begin{equation*}
    u = \sum_{\mathbb{E}} \prod_{\alpha=1}^{V} \mathbf{C}_{\alpha}[\mathbb{E}_\alpha],
\end{equation*}
where $\mathbb{E}_\alpha \subset \mathbb{E} =\{k_1,\,\ldots,\,k_E\}$, $\alpha=1,\,\dots,\,d$, are index sets of the contraction edges connected with the vertex $\alpha$. Here, $\sum\limits_{\mathbb{E}}$ is defined as $\sum\limits_{\mathbb{E}}:=\sum\limits_{k_1=1}^{r_1}\cdots \sum\limits_{k_E=1}^{r_E}$ with $E=|\mathbb{E}|$. For a $d^\mathrm{th}$ order tensor $\tensor{U}$, the TTN can be denoted as: 
\begin{equation}
    \tensor{U}[\mathbb{E}^o] = \sum_{\mathbb{E}} \prod_{\alpha=1}^{V} \tensor{C}_{\alpha}[\mathbb{E}_{\alpha}^o,\, \mathbb{E}_\alpha],
\end{equation}
where $\mathbb{E}^o:=\{i_1,\,\ldots,\,i_d\}$ with $|\mathbb{E}^o|=d$. 

For instance, assume that $\tensor{U}\in \mathbb{R}^{n_1\times n_2}$ is a $2^\mathrm{nd}$-order tensor defined as follows:
\begin{align*}
    \tensor{U}[i_1,i_2] &= \sum_{k_1,\dots,k_5} \tensor{C}_1[i_1,k_1,k_2] \tensor{C}_2[i_2,k_1,k_3] \tensor{C}_3[k_3,k_4,k_5] \tensor{C}_4[k_4] \tensor{C}_5[k_5] \tensor{C}_6[k_2] \\
    &:= \sum\limits_{\mathbb{E}} \prod\limits_{\alpha=1}^{6} \tensor{C}_\alpha[\mathbb{E}_\alpha^o,\,\mathbb{E}_\alpha] = \tensor{U}[\mathbb{E}^o],
\end{align*}
where $\tensor{C}_1 \in \mathbb{R}^{n_1 \times r_1 \times r_2}$, $\tensor{C}_2 \in \mathbb{R}^{n_2 \times r_1 \times r_3}$, $\tensor{C}_3 \in \mathbb{R}^{r_3 \times r_4 \times r_5}$, $\tensor{C}_4 \in \mathbb{R}^{r_4}$, $\tensor{C}_5 \in \mathbb{R}^{r_5}$, and $\tensor{C}_6 \in \mathbb{R}^{r_2}$, respectively. The configuration of $\tensor{U}$ is given in \Cref{fig:example-tree-tensor-a} and the TTN diagram is displayed in \Cref{fig:example-tree-tensor-b}, respectively. In this work, for a TTN diagram, the edges connected to each vertex are arranged from left to right in increasing order of contraction indices or physical indices.
For clarity and conciseness, as shown in \Cref{fig:example-tree-tensor-c}, we omit the contraction sizes and physical sizes in the TTN diagrams when no ambiguity arises.

\begin{figure}[htb]
    \centering
    \subcaptionbox{Configuration \label{fig:example-tree-tensor-a}}[0.253\linewidth]{
\ifmytrue
    \begin{forest}
        bigtree
        [,fill=none 
        [,label=left:$\tensor{C}_1$ , EL={left=-2pt}{$i_1$}
            [,label=right:$\tensor{C}_6$, EL={right=-2pt}{$k_2$}]
            [,label=left:$\tensor{C}_2$, EL={left=-2pt}{$k_1$}
                [,fill=none, EL={right=-2pt}{$i_2$}]
                [,label=left:$\tensor{C}_3$, EL={left=-2pt}{$k_3$}
                    [,label=above:$\tensor{C}_5$, EL={right=-2pt}{$k_5$}] 
                    [,label=above:$\tensor{C}_4$, EL={left=-2pt}{$k_4$}]
                ]
            ]
        ]
        ]
    \end{forest}
\fi    
    }
    \subcaptionbox{TTN diagram \label{fig:example-tree-tensor-b}}[0.253\linewidth]{
\ifmytrue
    \begin{forest}
        bigtree
        [,fill=none 
        [,label=left:$\tensor{C}_1$ , EL={left=-2pt}{$n_1$}
            [,label=right:$\tensor{C}_6$, EL={right=-2pt}{$r_2$}]
            [,label=left:$\tensor{C}_2$, EL={left=-2pt}{$r_1$}
                [,fill=none, EL={right=-2pt}{$n_2$}]
                [,label=left:$\tensor{C}_3$, EL={left=-2pt}{$r_3$}
                    [,label=above:$\tensor{C}_5$, EL={right=-2pt}{$r_5$}] 
                    [,label=above:$\tensor{C}_4$, EL={left=-2pt}{$r_4$}]
                ]
            ]
        ]
        ]
    \end{forest}
\fi
    }
    \subcaptionbox{Simplified TTN diagram \label{fig:example-tree-tensor-c}}[0.3\linewidth]{
\ifmytrue
    \begin{forest}
        bigtree
        [,fill=none 
        [,label=left:$\tensor{C}_1$ , EL={left=-2pt}{}
            [,label=right:$\tensor{C}_6$, EL={right=-2pt}{}]
            [,label=left:$\tensor{C}_2$, EL={left=-2pt}{}
                [,fill=none, EL={right=-2pt}{}]
                [,label=left:$\tensor{C}_3$, EL={left=-2pt}{}
                    [,label=above:$\tensor{C}_5$, EL={right=-2pt}{}] 
                    [,label=above:$\tensor{C}_4$, EL={left=-2pt}{}]
                ]
            ]
        ]
        ]
    \end{forest}
\fi
    }
    \caption{An example of a $2^\textrm{nd}$-order tree tensor network. In the configuration diagram, edge labels are physical and contraction indices. In the TTN diagram, edge labels are dimensions of the corresponding mode.}
    \label{fig:example-tree-tensor}
\end{figure}

\subsection{The functional tree tensor networks and their derivatives}
\label{TTNderivatives}

Similar to TT or other tensor representations \cite{dolgov2021functional, gorodetsky2019continuous}, the TTN also possesses a continuous analogue. By replacing $\tensor{C}_\alpha$ with tensor-valued functions $\tensor{f}_\alpha $, we extend a TTN to a \emph{functional tree tensor network}. Typically, for $\vector{x}_\mu\in\mathbb{R}^{m_\mu}$ with $\mu = 1,\,\ldots,\,d$, we denote elements of the functional TTN $\tensor{F}(\vector{x}_1,\, \dots,\, \vector{x}_d)$ as
\begin{equation}\label{eqn:functional-TTN}
    \tensor{F}[\mathbb{E}^o](\vector{x}_1,\, \dots,\, \vector{x}_d) = \sum_{\mathbb{E}} \prod_{\alpha=1}^{V} \tensor{f}_{\alpha}[\mathbb{E}_{\alpha}^o,\, \mathbb{E}_{\alpha}](\Tilde{\vector{x}}_\alpha),
\end{equation}
where $\Tilde{\vector{x}}_\alpha \in \mathbb{R}^{\Tilde{m}_\alpha}$ is a vector with elements selected from $(\vector{x}_1,\,\ldots,\,\vector{x}_d)$. In the rest of this paper, we sometimes simplify TTN $\tensor{F}(\vector{x}_1, \, \cdots, \, \vector{x}_d)$ as $\tensor{F}$. 
For ease of explanation, assume $V=d$, $ \tilde{\vector{x}}_\alpha = \vector{x}_\alpha$, and introduce the definition of derivatives for the functional TTN $\tensor{F}$. This definition can be extended to more complex cases, but we omit the details due to page limitations.

Due to the separable representation of $\tensor{F}$, the partial derivative of $\mathbf{F}$ with respect to $\vector{x}_{\alpha_0}$ is equivalent to differentiate on the $\alpha_0^\mathrm{th}$ component function $\tensor{f}_{\alpha_0}$, and then adding an additional open mode to the resulting component $\tensor{f}'_{\alpha_0}$. We denote the resulting partial derivative by $\mathfrak{D}_{\alpha_0}\tensor{F}$. This operation corresponds to adding a physical index to $\mathbb{E}_{\alpha_0}^o$. The new set of physical indices of component $\tensor{f}'_{\alpha_0}$ and $\mathfrak{D}_{\alpha_0}\tensor{F}$ is denoted as $\Tilde{\mathbb{E}}_{\alpha_0}^o = \mathbb{E}_{\alpha_0}^o \cup \{ i_{|\mathbb{E}^o|+1} \}$ and $\Tilde{\mathbb{E}}^o = \mathbb{E}^o \cup \{ i_{|\mathbb{E}^o|+1} \}$, respectively. With these notations, we can express elements of partial derivative $\mathfrak{D}_{\alpha_0}\tensor{F}$ as:
\begin{equation}\label{eqn:tensor-derivative}
    (\mathfrak{D}_{\alpha_0} \tensor{F})[\Tilde{\mathbb{E}}^o](\vector{x}_1,\, \dots,\, \vector{x}_d) = \sum_{\mathbb{E}} \left[ \tensor{f}'_{\alpha_0}[\Tilde{\mathbb{E}}_{\alpha_0}^o,\, \mathbb{E}_{\alpha_0}](\vector{x}_{\alpha_0}) \prod_{ \substack{\alpha=1 \\ \alpha \neq \alpha_0}}^{d} \tensor{f}_{\alpha}[\mathbb{E}_{\alpha}^o,\, \mathbb{E}_\alpha](\vector{x}_\alpha) \right].
\end{equation}
It is important to note that we can obtain the tensor $\mathfrak{D}_{\alpha_0}\tensor{F}$ by only adjusting the $\alpha_0^\mathrm{th}$ component of $\tensor{F}$ as defined in \eqref{eqn:tensor-derivative}. 

The partial derivative of a TTN can be easily understood and implemented using the TTN diagram. Let us take a second-order tensor-valued function $\tensor{F}$ as an example. The TTN diagram of $\tensor{F}$ is displayed in \Cref{fig:example-partial-derivative-TTN-a}. By taking the derivative of the variable at the $6^\mathrm{th}$ core, the resulting $\mathfrak{D}_6 \tensor{F}$ is a third-order tensor-valued function, represented by the TTN diagram in \Cref{fig:example-partial-derivative-TTN-b}. The dimension of the new physical mode is the same as the dimension of the variable we are differentiating on.

\begin{figure}[htb]
    \centering
    \subcaptionbox{$\tensor{F}$ \label{fig:example-partial-derivative-TTN-a}}[0.3\linewidth]{
\ifmytrue
        \begin{forest}
            bigtree
            [,fill=none 
            [,label=left:$\tensor{f}_1$ , EL={left=-2pt}{$n_1$}
                [,label=right:$\tensor{f}_6$, EL={right=-2pt}{$r_2$}]
                [,label=left:$\tensor{f}_2$, EL={left=-2pt}{$r_1$}
                    [,fill=none, EL={right=-2pt}{$n_2$}]
                    [,label=left:$\tensor{f}_3$, EL={left=-2pt}{$r_3$}
                        [,label=above:$\tensor{f}_5$, EL={right=-2pt}{$r_5$}] 
                        [,label=above:$\tensor{f}_4$, EL={left=-2pt}{$r_4$}]
                    ]
                ]
            ]
            ]
        \end{forest}
\fi
    }
    \subcaptionbox{$\mathfrak{D}_6 \tensor{F}$ \label{fig:example-partial-derivative-TTN-b}}[0.3\linewidth]{
\ifmytrue
        \begin{forest}
            bigtree
            [,fill=none 
            [,label=left:$\tensor{f}_1$ , EL={left=-2pt}{$n_1$}
                [,label=right:$\tensor{f}'_6$, EL={right=-2pt}{$r_2$}
                    [,fill=none, EL={right=-2pt}{$m_6$}]
                ]
                [,label=left:$\tensor{f}_2$, EL={left=-2pt}{$r_1$}
                    [,fill=none, EL={right=-2pt}{$n_2$}]
                    [,label=left:$\tensor{f}_3$, EL={left=-2pt}{$r_3$}
                        [,label=above:$\tensor{f}_5$, EL={right=-2pt}{$r_5$}] 
                        [,label=above:$\tensor{f}_4$, EL={left=-2pt}{$r_4$}]
                    ]
                ]
            ]
            ]
        \end{forest}
\fi
    }
    \subcaptionbox{$(\mathfrak{D}_6 \tensor{F}) \vector{y}'$ \label{fig:example-partial-derivative-TTN-c}}[0.3\linewidth]{
\ifmytrue
        \begin{forest}
            bigtree
            [,fill=none 
            [,label=left:$\tensor{f}_1$ , EL={left=-2pt}{$n_1$}
                [,label=right:$\tensor{f}'_6$, EL={right=-2pt}{$r_2$}
                    [,label=above:{$\vector{y}'$}, EL={right=-2pt}{$m_6$} ]
                ]
                [,label=left:$\tensor{f}_2$, EL={left=-2pt}{$r_1$}
                    [,fill=none, EL={right=-2pt}{$n_2$}]
                    [,label=left:$\tensor{f}_3$, EL={left=-2pt}{$r_3$}
                        [,label=above:$\tensor{f}_5$, EL={right=-2pt}{$r_5$}] 
                        [,label=above:$\tensor{f}_4$, EL={left=-2pt}{$r_4$}]
                    ]
                ]
            ]
            ]
        \end{forest}
\fi
    }
    \caption{An example of the partial derivative of functional TTNs.}
    \label{fig:derivative-tree-tensor}
\end{figure}

\subsubsection{The differential of functional tree tensor networks}

With the definition of the partial derivative of functional TTNs, we can extend the concept of differential from scalar-valued or vector-valued functions to functional TTNs. For a functional TTN $\tensor{F}$ as defined in \eqref{eqn:functional-TTN}, the differential of $\tensor{F}$ at $(\vector{x}_1, \,\ldots\,\vector{x}_d)$, denoted by $\dd \tensor{F}$, can be defined as follows
\begin{equation}\label{eqn:differential-d-var}
    \dd \tensor{F} := \dd \tensor{F}(\vector{x}_1, \,\ldots\,\vector{x}_d) = \sum_{\alpha=1}^d \mathfrak{D}_\alpha \tensor{F} \dd \vector{x}_\alpha.
\end{equation}
For each $\alpha$, $\mathfrak{D}_\alpha \tensor{F}$ includes a new physical mode with dimension $m_\alpha$ and $\dd \vector{x}_\alpha $ is a differential that is a vector of dimension $m_\alpha$. The contraction of the TTN $\mathfrak{D}_\alpha \tensor{F}$ and $\dd \vector{x}_\alpha $ results in a TTN, with the same order as $\tensor{F}$. Therefore, summing over the $\alpha$'s, the differential $\dd \tensor{F}$ defined by \eqref{eqn:differential-d-var} is also a TTN of the same order as $\tensor{F}$. Next, using the definition of differential, we establish the relationship between the total derivatives and partial derivatives of functional TTNs. By assuming $\vector{x}_\alpha = \vector{y}$, $\forall \alpha = 1,\dots,d$, we can rewrite the elements of $|\mathbb{E}^o|^{\mathrm{th}}$ order tensor $\tensor{F}$ given in \eqref{eqn:functional-TTN} as
\begin{equation}\label{eqn:original-tensor}
    \tensor{F}[\mathbb{E}^o](\bm{y}) = \sum_{\mathbb{E}} \prod_{\alpha=1}^{d} \tensor{f}_\alpha[\mathbb{E}_{\alpha}^o, \, \mathbb{E}_\alpha](\vector{y}).
\end{equation}
The total derivative of $\tensor{F}$ with respect to $\vector{y}$ is defined as 
\begin{equation}\label{eqn:differential-1-var}
    \tensor{F}'(\vector{y}) := \frac{\dd \tensor{F}}{\dd \vector{y}} = \sum_{\alpha=1}^{d} (\mathfrak{D}_{\alpha} \tensor{F})(\vector{y}),
\end{equation}
which is a $(|\mathbb{E}^o|+1)^{\mathrm{th}}$ order tensor with a new physical mode of dimension $\mathbb{R}^{m_1}$ as compared to $\tensor{F}$. The $\alpha^\mathrm{th}$ term $(\mathfrak{D}_{\alpha} \tensor{F})(\vector{y})$ in the summation is a $(|\mathbb{E}^o| + 1)^\text{th}$ order TTN with elements $(\mathfrak{D}_{\alpha} \tensor{F})[\mathbb{E}^o \cup \{ i_{|\mathbb{E}^o|+1} \}](\vector{y})$, which can be represented as a TTN diagram with a new open edge extended from the vertex $\alpha$. Note that the total derivative $\tensor{F}'(\vector{y})$ is a $(|\mathbb{E}^o|+1)^{\mathrm{th}}$ order tensor, which can be expressed as a sum of $d$ TTNs using \eqref{eqn:differential-1-var}. However, the resulting tensor $\tensor{F}'(\vector{y})$ may not necessarily retain the TTN structure.

By recursion, we can define higher-order total derivatives of $\tensor{F}$ with respect to $\vector{y}$ as: 
\begin{equation}\label{eqn:high-order-total-derivative}
\begin{aligned}
    \tensor{F}^{(k)}(\vector{y}) &:= \frac{\dd^k}{\dd \vector{y}^k} \left( \tensor{F}(\vector{y}) \right) 
    = \frac{\dd^{k-1}}{\dd \vector{y}^{k-1}} \sum_{\alpha_1} (\mathfrak{D}_{\alpha_1} \tensor{F})(\vector{y}) \\
    &= \cdots 
    = \sum_{\alpha_1,\,\dots,\,\alpha_k} (\mathfrak{D}_{\alpha_k} \cdots \mathfrak{D}_{\alpha_1} \tensor{F})(\vector{y}),
\end{aligned}
\end{equation}
where the summation is over $\alpha_j \in \{1,\,\dots,\,d\}$, $j=1,\,\dots,\,k$. So the summation on the right hand side contains $d^k$ terms. The sequence of $\alpha_1, \dots, \alpha_k$ determines the sequence of differentiation. For different sequences of $\alpha$'s, the resulting partial derivatives may be the same. We do not distinguish between them or calculate their multiplicities here. This issue will be addressed in \Cref{sec:3}. Throughout the derivation process in \eqref{eqn:high-order-total-derivative}, the active contraction indices of each term in the right hand side remain unchanged. As a result, the TTN diagrams of functional TTNs in the summation retain the same structure as that of $\tensor{F}$, except the open edges.

Finally, let us consider a functional TTN, whose components are composite functions. The simplest case is to substitute $\vector{y}$ with $\vector{y}(t)$ in \eqref{eqn:original-tensor}. We then compute the total derivative of $\tensor{F}$ with respect to the scale variable $t$. This derivation process can  be extended to more general situations, where $\vector{y}$ is a vector-valued function of a vector variable. By application of chain rule, we obtain the first order total derivative of $\mathbf{F}$ with respect to $t$ as follows:
\begin{equation}
\label{eq:derivativeforcompositefunctional}
\begin{aligned}
    \frac{\dd}{\dd t} \left( \mathbf{F}(\bm{y}(t)) \right) &:= \frac{\dd}{\dd t} \left( \tensor{F}(\vector{y}(t)) \right) \\
    &= \sum_{\alpha=1} ^d\big((\mathfrak{D}_{\alpha} \tensor{F})(\vector{y}(t))   \big) \;\vector{y}'(t) \\
    &=: \sum_{\alpha=1}^d \tilde{\tensor{F}}_\alpha(\vector{y}(t))
    .
\end{aligned}
\end{equation}
where 
$\tilde{\tensor{F}}_\alpha(\vector{y}(t)) = \big((\mathfrak{D}_{\alpha} \tensor{F})(\vector{y}(t))\big) \;  \vector{y}'(t)$ 
is a TTN with the same number of physical index as $\tensor{F}$, but one more contraction index than $\tensor{F}$, and $\vector{y}'$ can be denoted as the $(d+1)^{\mathrm{th}}$ component of $\tilde{\tensor{F}}_\alpha$. In this case, taking the derivative of a TTN is equivalent to contracting the new physical mode of $\mathfrak{D}_\alpha \tensor{F}$ with $\vector{y}'$, forming a new contraction index. An example illustrating this process is provided in \Cref{fig:example-partial-derivative-TTN-c}. High order derivatives of $\mathbf{F}$ with respect to $t$ can be obtained by recursive application of this process. 

\section{The derivative of tree tensor networks with ODE constraints}\label{sec:3}

Based on the representation of TTNs and their derivatives, we compute the high order derivatives and the Taylor expansion of vector-valued functions satisfying ODE constraints in this section.  
Consider the following ODEs system: 
\begin{equation}\label{eqn:ode-system}
    \vector{y}'(t) = \vector{f}(\vector{y}(t)),
\end{equation}
where $\vector{y}: \mathbb{R} \to \mathbb{R}^d$ is the unknown function and $\vector{f}: \mathbb{R}^d \to \mathbb{R}^d$ is a given functional. This type of ODEs has widely applications in fluid dynamics \cite{chilla2012new, temam2024navier}, nonlinear dynamics and chaos \cite{lorenz1963deterministic, rossler1976equation}, nonlinear optics \cite{agrawal2000nonlinear, akhmediev1997nonlinear}, quantum systems \cite{schrodinger1926undulatory, thaller2013dirac}, plasma physics \cite{chen1984introduction}, condensed matter physics \cite{ginzburg2009theory}, etc. The aim of this subsection is to compute high-order derivatives of $\vector{y}$ with respect to $t$, and represent it in terms of the derivatives of $\vector{f}$ with respect to $\vector{y}$.



\subsection{Differentiation following leaf-wise growth of trees}\label{sec:3.1}
As mentioned in \Cref{TTNderivatives}, when a partial derivative is taken at a vertex in the given TTN, a new edge is introduced and connected from the upper right. If the partial derivative is applied to a composite function, a new vertex is subsequently introduced. The new edge and new vertex are labelled immediately after the respective current largest labels. The $k^\mathrm{th}$ order derivative of $\vector{f}$ respect to $t$ can be recursively written out as follows:
\begin{equation}\label{eqn:high-order-derivative-path-1}
    \begin{aligned}
        \frac{\dd^k}{\dd t^k} \left( \vector{f}(\vector{y}) \right) &= \frac{\dd^{k-1}}{\dd t^{k-1}} \left( (\mathfrak{D}_1 \vector{f}(\vector{y}) ) \vector{f}(\vector{y}) \right) \\
        &= \frac{\dd^{k-2}}{\dd t^{k-2}} \left[ \mathfrak{D}_1 ((\mathfrak{D}_1 \vector{f}(\vector{y}) ) \vector{f}(\vector{y})) \vector{f}(\vector{y}) + \mathfrak{D}_2 ((\mathfrak{D}_1 \vector{f}(\vector{y}) ) \vector{f}(\vector{y})) \vector{f}(\vector{y}) \right] 
        \\
        &= \cdots,
    \end{aligned}
\end{equation}
which is ultimately decomposed into a summation of $k!$ TTNs. This process describes how a TTN grows from the root, adding one leaf at a time, to eventually form a TTN with $k+1$ components.

To write down this process clearly, let us define $\tensor{F}_1 = (\mathfrak{D}_1 \vector{f}(\vector{y}) ) \vector{f}(\vector{y})\in \mathbb{R}^d$ and
\begin{equation}
\label{TTNfordifferentiationpath:eq}
    \tensor{F}_{\alpha_j,\,\ldots,\,\alpha_1} = \left(\mathfrak{D}_{\alpha_j} \tensor{F}_{\alpha_{j-1},\,\ldots,\,\alpha_1}\right) \vector{f}(\vector{y})\in \mathbb{R}^d, \quad \text{for } j > 1.
\end{equation}
Written element-wise, this yields
\begin{equation}
    \begin{aligned}
        \tensor{F}_1[i_1] &= (\mathfrak{D}_1 \vector{f})[i_1, k_1](\vector{y})\; \vector{f}[k_1](\vector{y}), \\
        \tensor{F}_{\alpha_j,\,\ldots,\,\alpha_1}[i_1] &= (\mathfrak{D}_{\alpha_j} \tensor{F}_{\alpha_{j-1},\,\ldots,\,\alpha_1})[i_1, k_j] \;\vector{f}[k_j](\vector{y}) , \quad \text{for } j > 1,
    \end{aligned}
\end{equation}
where $i_1=1,\,2,\,\ldots,\,d$, the subscript $\alpha_i$ denotes the path of differentiation, and the contraction index set for $\tensor{F}_{\alpha_j,\,\ldots,\,\alpha_1}$ is $\{ k_1, \, \ldots, \, k_j \}$. The subscripts $\alpha_j,\,\ldots,\,\alpha_1$ with $\alpha_i\leq i$ are called a {\it differentiation path}, which determines how a tree grows from the root by adding one leaf at a time. Using this notation, \eqref{eqn:high-order-derivative-path-1} can be expressed as follows:
\begin{equation}\label{eqn:high-order-derivative-path-2}
    \begin{aligned}
        \frac{\dd^k}{\dd t^k} \left( \vector{f}(\vector{y}) \right) &= \frac{\dd^{k-1}}{\dd t^{k-1}} \tensor{F}_1  
        = \frac{\dd^{k-2}}{\dd t^{k-2}} \left[ (\mathfrak{D}_1 \tensor{F}_1)\; \vector{f}(\vector{y}) + (\mathfrak{D}_2 \tensor{F}_1) \;\vector{f}(\vector{y})\right] \\
        &= \frac{\dd^{k-2}}{\dd t^{k-2}} \left( \tensor{F}_{1,1} + \tensor{F}_{2,1} \right) \\
        &= \cdots 
        = \sum_{\alpha_1 \leq 1, \, \ldots, \, \alpha_k \leq k} \tensor{F}_{\alpha_k,\,\ldots,\,\alpha_1}.
    \end{aligned}
\end{equation}

For each TTN appearing on the right hand side of \eqref{eqn:high-order-derivative-path-2}, the corresponding TTN configuration can be derived from an unlabelled tree by assigning a contraction index $k_j$ to each corresponding edge. When $k=8$, the configurations of the TTN $\matrix{F}_{1,5,5,3,1,2,1,1}$ and $\matrix{F}_{1,5,5,3,1,1,2,1}$ are displayed in \Cref{fig:valid-labelled-tree-a1} and \Cref{fig:valid-labelled-tree-a2}, while the corresponding unlabelled tree is shown in \Cref{fig:valid-labelled-tree-a0}. To simplify the notation $\tensor{F}_{\alpha_k,\,\ldots,\,\alpha_1}$,  for a given unlabelled tree $T$ defined in \cite[Chapter 3]{butcher2016numerical}, we define a corresponding TTN $T(\vector{f})$, whose TTN diagram closely resembles the graph of $T$.

\begin{figure}[htb]
    \centering
    \subcaptionbox{TTN $\matrix{F}_{1,5,5,3,1,2,1,1}$. \label{fig:valid-labelled-tree-a1}}[0.3\textwidth]{
\ifmytrue
        \begin{forest}
        midtree,
        for tree={%
            s sep=18pt,
        },
        EL/.style 2 args={
            edge label={node[midway, font=\sffamily\notsotiny, #1]{#2}},
        },
        [,fill=none
        [,EL={left=-2pt}{$i_1$}
            [,EL={below right=-4pt}{$k_8$}]
            [,EL={above left=-4pt}{$k_4$}
                [,EL={below right=-4pt}{$k_7$}][,EL={below left=-4pt}{$k_6$}]
            ]
            [,EL={above right=-4pt}{\color{red}$k_2$}
                [,EL={left=-2pt}{$k_5$}]
            ]
            [,EL={below left=-4pt}{$k_1$}
                [,EL={left=-2pt}{\color{red}$k_3$}]
            ]
        ]
        ]
        \end{forest}
\fi
    }
    \hfill
    \subcaptionbox{TTN $\matrix{F}_{1,5,5,3,1,1,2,1}$. \label{fig:valid-labelled-tree-a2}}[0.3\textwidth]{
\ifmytrue
    \begin{forest}
        midtree,
        for tree={%
            s sep=18pt,
        },
        EL/.style 2 args={
            edge label={node[midway, font=\sffamily\notsotiny, #1]{#2}},
        },
        [,fill=none
        [,EL={left=-2pt}{$i_1$}
            [,EL={below right=-4pt}{$k_8$}]
            [,EL={above left=-4pt}{$k_4$}
                [,EL={below right=-4pt}{$k_7$}][,EL={below left=-4pt}{$k_6$}]
            ]
            [,EL={above right=-4pt}{\color{blue}$k_3$}
                [,EL={left=-2pt}{$k_5$}]
            ]
            [,EL={below left=-4pt}{$k_1$}
                [,EL={left=-2pt}{\color{blue}$k_2$}]
            ]
        ]
        ]
        \end{forest}
\fi
    }
    \hfill
    \subcaptionbox{Unlabelled tree $T$. \label{fig:valid-labelled-tree-a0}}[0.3\textwidth]{
\ifmytrue
        \begin{forest}
        midtree
        [,label={[yshift=2pt]}
            [,]
            [,
                [,][,]
            ]
            [,
                [,]
            ]
            [,
                [,]
            ]
        ]
        \end{forest}
\fi
    }
    \caption{TTNs and corresponding unlabelled tree of order 9.}
    \label{fig:TTNandunlabelledtree}
\end{figure}

\begin{definition}[A rooted tree $T$. {\cite[Definition 300B]{butcher2016numerical}} ]\label{def:Rootedtree}
    A triplet $T:=(\mathcal{V},\,\mathcal{E},v_1)$  is a rooted tree if  $(\mathcal{V},\,\mathcal{E})$ is a tree and $v_1\in \mathcal{V}$. The ‘root’ of the rooted tree $T$ is $v_1$.

\end{definition}

    For a rooted tree $T$, let the vertex set be denoted by $\mathcal{V} = \{ v_1, \dots, v_V \}$ and the edge set by $\mathcal{E} = \{ e_1, \dots, e_E \}$. A 7th-order rooted tree is shown as an example in \Cref{fig:example-TTN-Tf-a}. As illustrated in \Cref{fig:example-TTN-Tf-b}, for a given rooted tree $T$, we sometimes add an open edge to the root from below, resulting in a rooted tree with an open edge. In this case, the edge set becomes $\mathcal{E} = \{e_0, e_1, \dots, e_E \}$. 
    The edges incident to vertex $v_\alpha$ are denoted by $\mathcal{E}_\alpha = \mathcal{E}_\alpha^p \cup \mathcal{E}_\alpha^c$, where $\mathcal{E}_\alpha^p$ contains a single edge connecting $v_\alpha$ to its parent vertex, and $\mathcal{E}_\alpha^p$ denotes the set of edges connecting $v_\alpha$ to its child vertices.
    
\begin{definition}[Functional TTN]\label{def:TTN-Tf}
    For a function $\vector{f}(\vector{y}): \mathbb{R}^d \to \mathbb{R}^d$, analytic in a neighbourhood of $\vector{y}$,  we define a corresponding \emph{functional TTN $T(\vector{f})$}, which is itself a vector-valued functional from $\mathbb{R}^d \to \mathbb{R}^d$ and satisfies the following properties:
    \begin{itemize}
        \item[1.] The $T(\vector{f})$ is a TTN consisting $V$ components, whose TTN diagram corresponds to the rooted tree $T$ augmented with an open edge incident to the root from below.
        \item[2.] The $\alpha^\text{th}$ component of $T(\vector{f})$ is denoted by $\vector{f}^{(l_\alpha)}(\vector{y})$, where $l_\alpha=|\mathcal{E}_\alpha^c|$ represents the number of edges incident to the vertex $v_\alpha$ from above (i.e., the number of child edges).
        \item[3.] The first mode of  $\vector{f}^{(l_\alpha)}(\vector{y}) $ corresponds to the edge incident to the parent vertex, while the remaining modes of  $\vector{f}^{(l_\alpha)}(\vector{y}) $ (arising from differentiation) correspond to the edges incident to the child vertices.
        \item[4.] The functional $T(\vector{f})$ is obtained by contracting the functional TTN from the leaves toward the root.
    \end{itemize}
\end{definition}

    For a rooted tree $T$ shown in \Cref{fig:example-TTN-Tf-a}, the TTN diagram of the corresponding TTN $T(\vector{f})$ is illustrated in \Cref{fig:example-TTN-Tf-d}. Since $\vector{f}^{(l_\alpha)}(\vector{y})$ is a symmetry tensor and satisfies \eqref{symmetryvectorpower}, the contraction result is independent of the specific contraction relationships among the indices. Therefore, even without explicitly specifying these contractions, \Cref{def:TTN-Tf} does not lead to any ambiguity.
    
    In order to introduce an algebraic notation for the TTN $T(\vector{f})$, let us denote all contraction indices as $\mathbb{E} = \{ k_1, \dots, k_E \}$ with $ k_j\mapsto e_j$. By replacing $ e_j$ with $k_j$, we obtain the definition of indices configuration $\mathbb{E}_\alpha, \,\mathbb{E}_\alpha^p,\, \mathbb{E}_\alpha^c$, which are corresponding to $\mathcal{E}_\alpha, \,\mathcal{E}_\alpha^p,\, \mathcal{E}_\alpha^c$, respectively. For the root with $\alpha=1$, the physical index is denoted by $i_1$ and the set of physical indices in the root is $\mathbb{E}_1=\{i_1\}$.   
    Using this notations, for a function $\vector{f}: \mathbb{R}^d \to \mathbb{R}^d$, we write the elements of $T(\vector{f})$ as
    \begin{equation}\label{eqn:TTN-Tf-def}
        T(\vector{f})[\mathbb{E}_1^o]:=T(\vector{f})[\mathbb{E}_1^o](\vector{y}) = \sum_{\mathbb{E}} \vector{f}^{(l_1)}[\mathbb{E}_1^o, \mathbb{E}_1](\vector{y}) \prod_{\alpha=2}^{V} \vector{f}^{(l_\alpha)}[\mathbb{E}_\alpha](\vector{y}),
    \end{equation}
    where $l_\alpha = |\mathbb{E}_\alpha|-1$. It is important to note that the TTN $T(\vector{f})$ is uniquely determined by the rooted tree $T$ and the vector-valued function $\vector{f}$. The specific choice of contraction indices does not affect the contraction result of $T(\vector{f})$. This claim will be rigorously proved by \Cref{proposition:TTN-Tf}, \Cref{tensorsymlemma}, and \Cref{lemma:relationshipbetweenTTNandF}.
    
    
    
    

\begin{example}
    We provide a concrete example to illustrate \Cref{def:TTN-Tf}. Suppose $\vector{f}: \mathbb{R}^d \to \mathbb{R}^d$ is a smooth vector-valued function and $T$ is a rooted tree with seven vertices, whose graph is drawn in \Cref{fig:example-TTN-Tf-a}. Then the elements of the functional TTN $T(\vector{f})$ induced by the tree $T$ are given by
    \begin{equation}
    \begin{aligned}
        T(\vector{f})[i_1]:=~&T(\vector{f})[i_1](\vector{y}) = \sum_{k_1, \dots, k_6} \vector{f}^{(3)}[i_1, k_1, k_2, k_3](\vector{y}) \;\vector{f}''[k_1, k_4, k_5](\vector{y})\;\\ \qquad &  \vector{f}[k_2](\vector{y})\; \vector{f}[k_3]\vector{f}'[k_4, k_6](\vector{y})\;\vector{f}[k_5](\vector{y})\; \vector{f}[k_6](\vector{y}).        
    \end{aligned}     
    \end{equation}
    By defining $\mathbb{E}_1 = \{ k_1, k_2, k_3 \}$, $\mathbb{E}_2 = \{ k_1, k_4, k_5 \}$, $\mathbb{E}_3 = \{ k_2 \}$, $\mathbb{E}_4 = \{ k_3 \}$, $\mathbb{E}_5 = \{ k_4, k_6 \}$, $\mathbb{E}_6 = \{ k_5 \}$, and $\mathbb{E}_7 = \{ k_6 \}$, the elements of $T(\vector{f})$ can be written in the general form given in \eqref{eqn:TTN-Tf-def}. 
    The configuration of $T(\vector{f})$ and its corresponding TTN diagram are illustrated in \Cref{fig:example-TTN-Tf-c} and \Cref{fig:example-TTN-Tf-d}, respectively.
\end{example}


\begin{figure}[htb]
    \centering
    \subcaptionbox{A rooted tree $T$ \label{fig:example-TTN-Tf-a}}[0.24\linewidth]{
\ifmytrue
        \begin{forest}
            bigtree
            [,label=below:$v_1$ , EL={left=-2pt}{$n_1$}
                [,label={[yshift=-2pt]above:$v_4$}, EL={right=-2pt}{$e_3$}]
                [,label={[yshift=-2pt]above:$v_3$}, EL={left=-3pt,yshift=2pt}{$e_2$}]
                [,label=left:$v_2$, EL={left=-2pt}{$e_1$}
                    [,label={[yshift=-2pt]above:$v_6$}, EL={right=-2pt}{$e_5$}]
                    [,label=left:$v_5$, EL={left=-2pt}{$e_4$}
                        [,label={[yshift=-2pt]above:$v_7$}, EL={right=-2pt}{$e_6$}]
                    ]
                ]
            ]
        \end{forest}
\fi
    }
    \subcaptionbox{A rooted tree $T$ with an open edge \label{fig:example-TTN-Tf-b}}[0.24\linewidth]{
\ifmytrue
        \begin{forest}
            bigtree
            [,fill=none
            [,label=left:$v_1$ , EL={left=-2pt}{$e_0$}
                [,label={[yshift=-2pt]above:$v_4$}, EL={right=-2pt}{$e_3$}]
                [,label={[yshift=-2pt]above:$v_3$}, EL={left=-3pt,yshift=2pt}{$e_2$}]
                [,label=left:$v_2$, EL={left=-2pt}{$e_1$}
                    [,label={[yshift=-2pt]above:$v_6$}, EL={right=-2pt}{$e_5$}]
                    [,label=left:$v_5$, EL={left=-2pt}{$e_4$}
                        [,label={[yshift=-2pt]above:$v_7$}, EL={right=-2pt}{$e_6$}]
                    ]
                ]
            ]
            ]
        \end{forest}
\fi
    }
    \subcaptionbox{Configuration of $T(\vector{f})$ \label{fig:example-TTN-Tf-c}}[0.24\linewidth]{
\ifmytrue
        \begin{forest}
            bigtree
            [,fill=none 
            [,label=left:$\vector{f}^{(3)}$ , EL={left=-2pt}{$i_1$}
                [,label={[yshift=-2pt]above:$\vector{f}$}, EL={right=-2pt}{$k_3$}]
                [,label={[yshift=-2pt]above:$\vector{f}$}, EL={left=-3pt,yshift=2pt}{$k_2$}]
                [,label=left:$\vector{f}''$, EL={left=-2pt}{$k_1$}
                    [,label={[yshift=-2pt]above:$\vector{f}$}, EL={right=-2pt}{$k_5$}]
                    [,label=left:$\vector{f}'$, EL={left=-2pt}{$k_4$}
                        [,label={[yshift=-2pt]above:$\vector{f}$}, EL={right=-2pt}{$k_6$}]
                    ]
                ]
            ]
            ]
        \end{forest}
\fi
    }
    \subcaptionbox{TTN diagram of $T(\vector{f})$ \label{fig:example-TTN-Tf-d}}[0.24\linewidth]{
\ifmytrue
        \begin{forest}
            bigtree
            [,fill=none 
            [,label=left:$\vector{f}^{(3)}$ , EL={left=-2pt}{$d$}
                [,label={[yshift=-2pt]above:$\vector{f}$}, EL={right=-2pt}{$d$}]
                [,label={[yshift=-2pt]above:$\vector{f}$}, EL={left=-3pt}{$d$}]
                [,label=left:$\vector{f}''$, EL={left=-2pt}{$d$}
                    [,label={[yshift=-2pt]above:$\vector{f}$}, EL={right=-2pt}{$d$}]
                    [,label=left:$\vector{f}'$, EL={left=-2pt}{$d$}
                        [,label={[yshift=-2pt]above:$\vector{f}$}, EL={right=-2pt}{$d$}]
                    ]
                ]
            ]
            ]
        \end{forest}
\fi
    }
    \caption{An example of functional TTN $T(\vector{f})$.}
    \label{fig:example-TTN-Tf}
\end{figure}



We now present an alternative perspective for interpreting $T(\vector{f})$, which will be used to further reveal the relationship between $T(\vector{f})$ and the TTNs appearing on the right-hand side of \eqref{eqn:high-order-derivative-path-2}. To this end, we first recall the recursive definition of rooted trees.

\begin{definition}[{\cite[III.1 Definition 1.1]{hairer2006geometric}}]
    The set of (rooted) trees $\mathbb{T}$ is recursively defined as follows:
    \begin{enumerate}[(a)]
        \item The graph $\;\begin{forest} [] \end{forest} = [1]$ with only one vertex (called the root) belongs to $\mathbb{T}$;
        \item If $T_1, \ldots,  T_m \in \mathbb{T}$, then the graph obtained by grafting the roots of $T_1, \ldots, T_m$ to a new vertex also belongs to $\mathbb{T}$. It is denoted by 
        \begin{equation*}
            T = [T_1 \dots T_m],
        \end{equation*}
        and the new vertex is the root of $T$.
    \end{enumerate}
\end{definition}

With this definition of trees, the `elementary differentials' is defined as follows.

\begin{definition}[{\cite[Definition 310A]{butcher2016numerical}}]
    Given a tree $T$ and a function $\vector{f}: \mathbb{R}^d \to \mathbb{R}^d$, analytic in a neighbourhood of $\vector{y}$, the `elementary differential' $\vector{F}(T)(\vector{y})$ is defined by 
    \begin{equation}
        \begin{aligned}
            \vector{F}([1])(\vector{y}) &= \vector{f}(\vector{y}), \\
            \vector{F}([T_1 \dots T_m])(\vector{y}) &= \vector{f}^{(m)}(\vector{y}) \left(\vector{F}(T_1)(\vector{y}),\, \dots,\, \vector{F}(T_m)(\vector{y}) \right).
        \end{aligned}
    \end{equation}
\end{definition}

In the above definition, the term $\vector{f}^{(m)}(\vector{y}) \left(\vector{F}(T_1)(\vector{y}),\, \dots,\, \vector{F}(T_m)(\vector{y}) \right)$ is understood by viewing $\vector{f}^{(m)}(\vector{y})$ as a $m$-linear operator acting on $m$ vectors $\vector{F}(T_1)(\vector{y}),\, \dots,\,\allowbreak \vector{F}(T_m)(\vector{y})$. Elementary differentials can be interpreted as TTNs of $\vector{f}$, with the underlying TTN diagram induced by the tree $T$. This is revealed by the following proposition.

\begin{proposition} 
\label{proposition:TTN-Tf}
For a function $\vector{f}: \mathbb{R}^d \to \mathbb{R}^d$, analytic in a neighbourhood of $\vector{y}$, according to \eqref{symmetryproperty}, the TTN $T(\vector{f})\in\mathbb{R}^d$ satisfies the following recursive properties:
\begin{equation} \label{eqn:TTN-Tf}
\begin{aligned}
    T(\vector{f}) &= \vector{f}(\vector{y})\in\mathbb{R}^d,\quad\hbox{ if } T=[1],\\ 
    T(\vector{f}) &:= \vector{f}^{(l)}(\vector{y})\times^1 T_1(\vector{f})\times^1 T_2(\vector{f})\cdots \times^1 T_l(\vector{f})\\ 
    &=\vector{f}^{(l)}(\vector{y})\, T_1(\vector{f}) T_2(\vector{f})\cdots T_l(\vector{f}) \in\mathbb{R}^d,\quad\hbox{ if } T=[T_1 T_2\dots T_l].
\end{aligned}
\end{equation}
\end{proposition}

\begin{proof}
    The first property follows directly from \Cref{def:TTN-Tf} when $T=[1]$. The second property is based on the fact that TTN $T(\vector{f})$ is constructed by contracting components from the leaves toward the root.
    Since $T = [T_1 \dots T_l]$, we first contract the edges within the subtrees $T_1,\, \dots,\, T_l$, yielding TTNs $T_1(\vector{f}),\, \dots, \, T_l(\vector{f})$, respectively. We then contract these TTNs with the root component $\vector{f}^{(l)}(\vector{y})$, which establishes the second property. 
\end{proof}


Next, we study the relationship between TTN $T(\vector{f})$ and TTNs $\tensor{F}_{\alpha_k,\,\ldots,\,\alpha_1}$ in \eqref{eqn:high-order-derivative-path-2}.
Due to the partial symmetry of $\vector{f}^{(l)}(\vector{y})$ as stated in \eqref{symmetryproperty}, we obtain symmetry property  of TTN $T(\vector{f})$ in \Cref{tensorsymlemma}.


\begin{lemma}
\label{tensorsymlemma}
    For an unlabelled tree $T=[T_1 \dots T_l]$ and any permutation $\pi$ of $\{1,\,\ldots,$ $\,l\}$, the corresponding TTN $T(\vector{f})$ satisfies:
    \begin{equation}
    \label{tensorsymlemma:eq1}
    T(\vector{f})=\vector{f}^{(l)}(\vector{y}) T_1(\vector{f}) \cdots T_l(\vector{f})=\vector{f}^{(l)}(\vector{y}) T_{\pi(1)}(\vector{f}) \cdots T_{\pi(l)}(\vector{f}).
\end{equation}
    
\end{lemma}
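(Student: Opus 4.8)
The plan is to recognize that \eqref{tensorsymlemma:eq1} is a direct instance of the symmetry property \eqref{symmetryproperty} of high-order derivative tensors, once the notation is unpacked. First I would record an auxiliary fact that is implicit in the definition of $T(\vector{f})$: for every unlabelled tree $S$, the TTN $S(\vector{f})$ is a vector in $\mathbb{R}^d$. This is an easy induction on the number of vertices of $S$. The base case $S=[1]$ gives $S(\vector{f})=\vector{f}(\vector{y})\in\mathbb{R}^d$. For the inductive step $S=[S_1\ldots S_m]$, each $S_j(\vector{f})\in\mathbb{R}^d$ by the induction hypothesis, and $\vector{f}^{(m)}(\vector{y})$ is an $(m+1)^{\mathrm{th}}$ order tensor; contracting its last $m$ modes successively with the $m$ vectors $S_1(\vector{f}),\ldots,S_m(\vector{f})\in\mathbb{R}^d$ leaves exactly the first mode of dimension $d$, so $S(\vector{f})\in\mathbb{R}^d$.

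Next, I would apply this fact to the children of $T=[T_1,\ldots,T_l]$, so that $\vector{v}_i:=T_i(\vector{f})\in\mathbb{R}^d$ for $i=1,\ldots,l$. By the recursive definition, $T(\vector{f})=\vector{f}^{(l)}(\vector{y})\,T_1(\vector{f})\cdots T_l(\vector{f})=\vector{f}^{(l)}(\vector{y})\,\vector{v}_1\cdots\vector{v}_l$, where — by the convention in \eqref{symmetryvectorpower} and the definition of the $(\cdot,1)$-contracted product — the right-hand side is the $(l+1)^{\mathrm{th}}$ order tensor $\vector{f}^{(l)}(\vector{y})$, which by the earlier discussion is partially symmetric in its last $l$ modes, contracted in those modes with the vectors $\vector{v}_1,\ldots,\vector{v}_l$. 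Property \eqref{symmetryproperty} then gives, for any permutation $\pi$ of $\{1,\ldots,l\}$,
\begin{equation*}
    \vector{f}^{(l)}(\vector{y})\,\vector{v}_1\cdots\vector{v}_l = \vector{f}^{(l)}(\vector{y})\,\vector{v}_{\pi(1)}\cdots\vector{v}_{\pi(l)},
\end{equation*}
and re-substituting $\vector{v}_i=T_i(\vector{f})$ yields exactly \eqref{tensorsymlemma:eq1}.

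I do not expect a serious obstacle: the content is essentially bookkeeping. The one point that needs care is making explicit that the ``product'' $T_1(\vector{f})\cdots T_l(\vector{f})$ in the definition of $T(\vector{f})$ denotes iterated contraction of the (mutually symmetric) last $l$ modes of $\vector{f}^{(l)}(\vector{y})$, rather than any entrywise operation, and that the order in which these contractions are performed is therefore immaterial — which is precisely what \eqref{symmetryproperty} formalizes. Once this identification is pinned down, the lemma follows in a single line, and no induction on the internal structure of the $T_i$ is required, since only the top-level children are being permuted.
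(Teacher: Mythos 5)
Your proposal is correct and matches the paper's argument: the paper likewise dispatches this lemma by noting it "follows directly from \eqref{symmetryproperty}," which is exactly the reduction you carry out after observing that each $T_i(\vector{f})$ is a vector in $\mathbb{R}^d$. Your version merely makes explicit the bookkeeping the paper leaves implicit.
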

The proof of \Cref{tensorsymlemma} follows directly from \eqref{symmetryproperty}. 

\begin{lemma}
\label{lemma:relationshipbetweenTTNandF}
    For any TTN $\tensor{F}_{\alpha_k,\,\ldots,\,\alpha_1}$ corresponding to an unlabelled tree $T$ of order $k+1$, the following relationship holds: 
    \begin{equation}
    \label{lemma:relationshipbetweenTTNandF:eq1}
    \tensor{F}_{\alpha_k,\,\ldots,\,\alpha_1} = T(\vector{f}).
\end{equation}
    
\end{lemma}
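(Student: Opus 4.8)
The plan is to argue by induction on $k$, the number of differentiations (equivalently $|T|=k+1$), using throughout the correspondence implicit in \eqref{eqn:high-order-derivative-path-2} between an admissible differentiation path $(\alpha_1\le 1,\ldots,\alpha_k\le k)$ and a rooted tree carrying a monotone vertex labelling: vertex $1$ is the root (the initial component $\vector{f}$ being differentiated), and at step $j$ the operator $\partial_{\alpha_j}$ creates a new vertex $j+1$ attached as a child of vertex $\alpha_j$; forgetting the labels yields the unlabelled tree $T$ in the statement. For the base case $k=1$, the chain rule gives $\tensor{F}_1[\mathbb{E}_1]=(\partial_1\vector{f}(\vector{y}))\,\vector{f}(\vector{y})=\vector{f}'(\vector{y})\,\vector{f}(\vector{y})$ since $\partial_1$ acts on the composite $\vector{f}(\vector{y}(t))$ with $\vector{y}'=\vector{f}$; the associated unlabelled tree is $T=[\,[1]\,]$, and the recursive definition of $T(\vector{f})$ gives $\vector{f}^{(1)}(\vector{y})\,[1](\vector{f})=\vector{f}'(\vector{y})\,\vector{f}(\vector{y})$, so the two agree.

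For the inductive step, fix a path $(\alpha_1,\ldots,\alpha_k)$ and let $S$ be the order-$k$ unlabelled tree of the truncated path, so that by the inductive hypothesis $\tensor{F}_{\alpha_{k-1},\ldots,\alpha_1}[\mathbb{E}_{k-1}]=S(\vector{f})$. Applying $\partial_{\alpha_k}$ acts only on the $\alpha_k^{\mathrm{th}}$ component of this TTN, as in \eqref{eqn:tensor-derivative}; that component is the composite $\vector{f}^{(i)}(\vector{y}(t))$, where $i$ is the number of children of vertex $\alpha_k$ in $S$, so the chain-rule computation of \eqref{eq:derivativeforcompositefunctional} raises its derivative index to $i+1$ and contracts the new physical mode with $\vector{y}'=\vector{f}(\vector{y})$ — i.e.\ it attaches a fresh leaf carrying the component $\vector{f}=\vector{f}^{(0)}$ to vertex $\alpha_k$, while (as noted after \eqref{eqn:high-order-total-derivative}) leaving all existing contraction indices untouched. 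Hence $\partial_{\alpha_k}(S(\vector{f}))$ is the TTN built on the tree $T$ obtained from $S$ by adding one leaf at vertex $\alpha_k$, and it remains to verify $\partial_{\alpha_k}(S(\vector{f}))=T(\vector{f})$ against the recursive definition of $T(\vector{f})$.

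This I would handle by a case split on the position of vertex $\alpha_k$. Write $S=[S_1,\ldots,S_l]$, so $S(\vector{f})=\vector{f}^{(l)}(\vector{y})\,S_1(\vector{f})\cdots S_l(\vector{f})$. If $\alpha_k$ is the root, then $T=[S_1,\ldots,S_l,[1]]$ and $\partial_{\alpha_k}$ bumps $\vector{f}^{(l)}$ to $\vector{f}^{(l+1)}$ and inserts the extra argument $[1](\vector{f})=\vector{f}(\vector{y})$; using Lemma~\ref{tensorsymlemma} to move the new factor into the last slot, this is exactly $T(\vector{f})$. If $\alpha_k$ lies in the subtree $S_j$, then $T=[S_1,\ldots,\widehat{S}_j,\ldots,S_l]$, where $\widehat{S}_j$ is $S_j$ with a leaf attached at the relevant vertex; since $\partial_{\alpha_k}$ touches only the factor $S_j(\vector{f})$, the inductive hypothesis applied to the subtree rooted at that vertex gives $\partial_{\alpha_k}(S_j(\vector{f}))=\widehat{S}_j(\vector{f})$, and substituting back into $S(\vector{f})$ yields $T(\vector{f})$.

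The main obstacle I anticipate is bookkeeping rather than depth: the path-to-tree correspondence is many-to-one — different orderings of a vertex's children arise from different paths, yet the partial symmetry of $\vector{f}^{(i)}$ makes the resulting TTNs coincide — so the argument must repeatedly invoke Lemma~\ref{tensorsymlemma} to normalize the order of subtrees, in particular to bring the branch being differentiated, or the newly spawned leaf, into the ``last child'' position so that the recursive formula for $T(\vector{f})$ applies verbatim. A secondary point requiring care is making the chain-rule step precise via \eqref{eq:derivativeforcompositefunctional}, so that differentiating a composite component $\vector{f}^{(i)}(\vector{y}(t))$ is seen to both raise the derivative index and create exactly one new $\vector{f}$-leaf, i.e.\ the tree grows by a single leaf at the prescribed vertex and nothing else changes.
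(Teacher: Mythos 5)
Your proof is correct in substance but takes a genuinely different route from the paper's. The paper proves the lemma by a single structural induction that decomposes the tree at the root, $T=[T_1,\ldots,T_l]$: it observes that contracting $\tensor{F}_{\alpha_k,\ldots,\alpha_1}$ from top to bottom reproduces each $T_i(\vector{f})$ by the induction hypothesis (each subtree corresponding to a sub-differentiation-path), and then invokes \Cref{tensorsymlemma} exactly once to absorb the permutation $\pi$ describing the contraction order at the root. You instead induct on the path length $k$, peel off the final operator $\partial_{\alpha_k}$, and track the single new leaf down to its attachment vertex via a case split (root versus interior of some $S_j$), recursing into the subtree in the latter case. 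Your version buys an explicit account of the chain-rule mechanics of \eqref{eq:derivativeforcompositefunctional} — that one differentiation raises one component's derivative index and spawns exactly one $\vector{f}$-leaf — which the paper leaves implicit; the paper's version buys brevity and avoids your case split. One point to tighten if you write this up: the inner claim $\partial_{\alpha_k}(S_j(\vector{f}))=\widehat{S}_j(\vector{f})$ is not literally an instance of your stated inductive hypothesis (which is about full differentiation paths), so you should either formulate the induction as the stronger statement ``attaching a leaf at any vertex $v$ of $S(\vector{f})$ yields $\widehat{S}(\vector{f})$,'' proved by structural induction on $S$, or note that the equality furnished by the induction hypothesis is an equality of TTN representations (component by component), not merely of contracted values, so that $\partial_{\alpha_k}$ may legitimately be applied to $S(\vector{f})$ in place of $\tensor{F}_{\alpha_{k-1},\ldots,\alpha_1}$.
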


\begin{proof}
    We prove this statement by mathematical induction. For $k=1$, the conclusion follows directly. Now, assume that the statement holds for any unlabelled tree with order less than $k+1$. For a tree $T$ with $|T|=k+1$, we express it as $T=[T_1 \dots T_l]$, where $T_i$ for $i=1,\,\ldots,\,l$ are subtrees of $T$ of order less than $k+1$. We contract $\tensor{F}_{\alpha_k,\,\ldots,\,\alpha_1}$ from top to bottom. Based on the induction hypothesis, for the $i^{\mathrm{th}}$ subtree, we obtain $T_i(\vector{f})\in\mathbb{R}^d$. Suppose the contraction order in $\tensor{F}_{\alpha_k,\,\ldots,\,\alpha_1}$ at the root vertex is given by a permutation $\pi$, then we have
\begin{equation}
    \tensor{F}_{\alpha_k,\,\ldots,\,\alpha_1} = \vector{f}^{(l)}(\vector{y}) T_{\pi(1)}(\vector{f}) \cdots T_{\pi(l)}(\vector{f})=\vector{f}^{(l)}(\vector{y}) T_1(\vector{f}) \cdots T_l(\vector{f})=T(\vector{f}),
\end{equation}
which completes the proof of the lemma.
\end{proof}

The \Cref{lemma:relationshipbetweenTTNandF} describes the `commutativity' of tensor partial derivatives. For example, $\matrix{F}_{1,5,5,3,1,2,1,1}=\matrix{F}_{1,5,5,3,1,1,2,1}$ indicates that the second and third partial derivatives can be interchanged. In \cite[Chapter 3]{butcher2016numerical}, a ‘forest' is defined as a collection of trees, potentially with repetitions, such as $T_1\, T_2 \dots T_l$. In tensor notation, a forest is represented as the product of $T_1(\vector{f})$, $ \ldots ,\,T_l(\vector{f})\in\mathbb{R}^d$, which is subsequently contracted with a symmetric tensor of order greater than or equal to $l$, as exemplified in \eqref{tensorsymlemma:eq1}. For $T=[T_1 \dots T_l]$, the Butcher product of trees $T$ and $T_{l+1}$, defined as $\tilde{T}=T\circ {T_{l+1}}:=[T_1 \dots T_l\, T_{l+1}]$, can be  expressed in terms of TTNs as follows:
\begin{equation}
    \tilde{T}(\vector{f}):={T}(\vector{f})\circ{T}_{l+1}(\vector{f})=\vector{f}^{(l+1)}(\vector{y}) T_1(\vector{f}) \cdots T_l(\vector{f})T_{l+1}(\vector{f}).
\end{equation}
If trees are denoted by TTNs $\tensor{F}_{\alpha_k,\,\ldots,\,\alpha_1}$ and ${\tensor{F}}_{\tilde{\alpha}_l,\,\ldots,\,\tilde{\alpha}_1}$, the Butcher product can be represented as:
\begin{equation}
    \label{Butchertreeproduct}
    \tensor{F}_{\alpha_k,\,\ldots,\,\alpha_1} \circ \tensor{F}_{\tilde{\alpha}_l,\,\ldots,\,\tilde{\alpha}_1} = \tensor{F}_{\tilde{\alpha}_l+k+1,\,\ldots,\,\tilde{\alpha}_1+k+1,\,1,\,\alpha_k,\,\ldots,\,\alpha_1}.
\end{equation}

Next, let us introduce several definitions and functions about unlabelled tree $T$.
\begin{definition}[{\cite[Theorem 304A and 305A]{butcher2016numerical}}] For an unlabelled tree $T = [T_1^{m_1}\, \allowbreak T_2^{m_2} \dots T_k^{m_k}]$, where $T_1,\,T_2,\,\ldots,\,T_k$ are distinct trees. Then we define
\begin{equation}
\begin{aligned}
    |T| &= 1 + \sum_{i=1}^km_i|T_i|,\\
    \sigma(T) & = \prod_{i=1}^km_i!\sigma(T_i)^{m_i},\\
    \gamma(T)&=T!= |T|  \prod_{i=1}^k (T_i!)^{m_i},\\
    \alpha(T)&=\frac{|T|!}{\sigma(T)T!}.
\end{aligned}
\end{equation}
\end{definition}

According to \Cref{lemma:relationshipbetweenTTNandF}, different differentiation paths result in distinct TTN configurations, yet they may correspond to the same unlabelled tree.  Motivated by this observation, we introduce the concept of a labelled tree to simplify TTN configurations and systematically analyze differentiation paths. In a labelled tree, the root vertex is assigned the value $1$, and each vertex connected from below by an edge with index $k_j$ is assigned by the value $j+1$. An example of such a labelled tree is provided in \Cref{fig:valid-labelled-tree}. Since the labelled trees are simplified representations of TTN configurations generated by differentiation, they must satisfy the following property:

(1) Each vertex receives one and only one label from $\{1,\,2,\,\ldots,\,|T|\}$.

(2) The vertices connected to a given vertex from above are assigned labels in increasing order from left to right.

(3) If $ (i,j)$ is a labelled edge then $i<j$.

Labelled trees satisfying these three properties are called \emph{valid labelled trees}. The definition of valid labelled trees here is closely related to the labelled trees defined in \cite[Chapter 3]{butcher2016numerical}. In \cite{butcher2016numerical}, the second property is replaced by: equivalent labellings under the symmetry group are counted only once. In this work, we select a specific labelled tree from the symmetry group using criterion (2). For example, $\begin{forest}
        midtree
        [,label={[yshift=0pt]left:$1$}
            [,label={right:$3$}]
            [,label={left:$2$}
            ]
        ]
        \end{forest}$ is chosen from the symmetry group $\left\{\begin{forest}
        midtree
        [,label={[yshift=0pt]left:$1$}
            [,label={right:$3$}]
            [,label={left:$2$}
            ]
        ]
        \end{forest},\begin{forest}
        midtree
        [,label={[yshift=0pt]left:$1$}
            [,label={right:$2$}]
            [,label={left:$3$}
            ]
        ]
        \end{forest}\right\}$.

\begin{theorem}
\label{numberofvalidlabelledtree}
The number of distinct ways of labelling the given tree $T$ under these conditions (1)-(3) is $\alpha(T)$.
\end{theorem}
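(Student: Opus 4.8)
The plan is to prove $\alpha(T) = |T|!/(\sigma(T)\,T!)$ is the number of valid labellings by a two-pronged strategy. First, I would give a recursive proof following the tree structure $T=[T_1^{m_1}\cdots T_k^{m_k}]$, directly counting the valid labellings in terms of those of the subtrees. Second, I would connect this combinatorial count back to the TTN/differentiation-path picture developed in the excerpt, showing that the labellings enumerate exactly the distinct differentiation paths producing a given $T$ (accounting for the interchange multiplicities from \Cref{lemma:relationshipbetweenTTNandF}).

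For the recursive core, fix a valid labelling of $T$. The root gets label $1$. The remaining $|T|-1$ labels $\{2,\dots,|T|\}$ must be distributed among the $m_1+\cdots+m_k$ subtrees; within each subtree the induced labelling must (after order-preserving relabelling to $\{1,\dots,|T_i|\}$) be a valid labelling of that subtree, and the root of each subtree must carry the smallest label among that subtree's labels (by property (3), since the subtree root sits below all its descendants but above nothing within the subtree except... actually one checks property (3) forces the subtree root label to be minimal within the subtree). So the count factors as: (number of ways to partition $\{2,\dots,|T|\}$ into blocks of the prescribed sizes $|T_1|,\dots$ with $m_i$ copies of size $|T_i|$) times (a product over subtrees of the number of valid labellings of that subtree), with a correction for property (2). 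The multinomial coefficient is $\binom{|T|-1}{|T_1|,\dots}$, which equals $(|T|-1)!/\prod_i (|T_i|!)^{m_i}$; each subtree contributes $\alpha(T_i)$ by induction; and property (2) — that among the $m_i$ identical copies of $T_i$ the blocks must appear in increasing order of their minimum element — divides by $m_i!$ for each repeated family. Assembling: the count is
\begin{equation*}
\frac{(|T|-1)!}{\prod_i (|T_i|!)^{m_i}}\cdot \prod_i \frac{\alpha(T_i)^{m_i}}{m_i!}.
\end{equation*}
Then I would substitute $\alpha(T_i) = |T_i|!/(\sigma(T_i)\,T_i!)$, use $|T|-1 = \sum m_i|T_i|$ is not needed directly but $|T|!/( |T|\cdot(|T|-1)!)=1$, and recognize $T! = |T|\prod_i (T_i!)^{m_i}$ and $\sigma(T)=\prod_i m_i!\,\sigma(T_i)^{m_i}$, to collapse everything to $|T|!/(\sigma(T)\,T!) = \alpha(T)$. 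This is bookkeeping once the recursion is set up.

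The main obstacle is getting property (2) exactly right in the recursion — precisely arguing that the tie-breaking rule "equivalent labellings under the automorphism group counted once" (Butcher's convention) coincides with "blocks of identical subtrees ordered by least label" (this paper's convention (2)), so that the division by $\prod_i m_i!$ is correct and not an over- or under-count. I would handle this by noting that the automorphism group of $T$ permuting the $m_i$ identical copies of $T_i$ acts freely on the set of labellings that ignore condition (2), since any nontrivial such permutation strictly changes which block has the least element; hence condition (2) selects exactly one representative per orbit, giving the factor $1/\prod_i m_i!$. A secondary subtlety is confirming the base case and that property (3) genuinely forces subtree roots to be label-minimal within their subtrees, which follows because every other vertex of a subtree is connected upward (transitively) to that subtree's root, so an edge chain forces strictly larger labels. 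With these two points nailed down, the induction closes cleanly.
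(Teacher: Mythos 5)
Your proof is correct, but it takes a genuinely different route from the paper's proof of \Cref{numberofvalidlabelledtree}. The paper does not derive the count directly: it shows that conditions (1)--(3) select exactly one representative from each orbit of the labellings under the symmetry group $\mathbb{A}(T)$ (existence by sorting subtree-root labels layer by layer, uniqueness by induction on layers), and then simply cites Butcher's Theorem 305A for the value $\alpha(T)$. You instead give a self-contained recursive derivation: root gets label $1$, the multinomial $\tfrac{(|T|-1)!}{\prod_i(|T_i|!)^{m_i}}$ distributes the remaining labels, each subtree contributes $\alpha(T_i)$ by induction, and condition (2) contributes the factor $\prod_i 1/m_i!$ because the automorphisms permuting identical copies act freely (distinct blocks have distinct minima). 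The algebra collapses correctly to $|T|!/(\sigma(T)\,T!)$ using $T!=|T|\prod_i(T_i!)^{m_i}$ and $\sigma(T)=\prod_i m_i!\,\sigma(T_i)^{m_i}$, and your two supporting observations (subtree roots are label-minimal by transitivity of property (3); the free action justifies dividing by exactly $\prod_i m_i!$) are the right ones to close the induction. Your argument is in fact much closer in spirit to the paper's \emph{alternative} counting in \Cref{subsubsec:labelling-trees}, which also partitions label sets hierarchically, though there the symmetry correction $1/\sigma(T)$ is applied globally at the end rather than level by level as in your induction. What your approach buys is independence from Butcher's Theorem 305A; what the paper's approach buys is a clean statement that its convention (2) and Butcher's orbit-representative convention are interchangeable, which it reuses when connecting to the classical theory. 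The second prong you sketch (identifying labellings with differentiation paths) is not needed for this theorem --- it is the content of \Cref{OnetoonemapbetweenlabelledtreeandTTN}.
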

\begin{proof}
    By removing the condition (2), we obtain all labelled trees satisfying (1) and (3) within the symmetry group $\mathbb{A}(T)$. For a labelled tree in $\mathbb{A}(T)$ with $l$ subtrees denoted as $T_1,\,\ldots,\,T_l$, assume the root of $T_i$ is assigned a value $a_i$. There exists a permutation $\pi$ such that $a_{\pi(i)}$ is in increasing order. Then the tree $T=[T_{\pi(1)}\ldots T_{\pi(l)}]$ belongs to $\mathbb{A}(T)$ and satisfies condition (2) at the first layer. By repeating this process, we eventually obtain a labelled tree $\hat{T}$ that satisfies conditions (1)-(3) in $\mathbb{A}(T)$. To prove the uniqueness, assume there exist two distinct labelled trees $\hat{T}_1$ and $\hat{T}_2$ satisfying (1)-(3). Since the root of both $\hat{T}_1$ and $\hat{T}_2$ is assigned the value 1, and both trees belong to $\mathbb{A}(T)$, the sets of labelled values in the first layer of $\hat{T}_1$ and $\hat{T}_2$ must be identical. Furthermore, by condition (2), we find that the topologies of $\hat{T}_1$ and $\hat{T}_2$ in the first layer are also identical. By induction, each subsequent layer of $\hat{T}_1$ and $\hat{T}_2$ must also be identical, implying $\hat{T}_1=\hat{T}_2$. Thus, the conditions (1)-(3) are equivalent to those used in \cite{butcher2016numerical}. By \cite[Theorem 305A]{butcher2016numerical}, the number of distinct ways of labelling the given tree $T$ is given by $\alpha(T)$.
\end{proof}

\begin{theorem}
\label{OnetoonemapbetweenlabelledtreeandTTN}
There exists a bijection between valid labelled trees with order $k+1$ and differentiation paths ${\alpha_k,\,\ldots,\,\alpha_1}$, where $\alpha_i\leq i$.
\end{theorem}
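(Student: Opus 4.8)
The plan is to exhibit an explicit pair of mutually inverse maps between the two sets. Fix $k\ge 1$. Given a differentiation path $(\alpha_1,\,\ldots,\,\alpha_k)$ with $\alpha_i\le i$, recall from \Cref{TTNderivatives} that $\tensor{F}_1$ already carries two components — the root, with $\partial_1\vector{f}$, and a leaf, with $\vector{f}$, joined by the contraction edge $k_1$ — so that the step $i=1$ is forced ($\alpha_1=1$), and that for $i\ge 2$ the operator $\partial_{\alpha_i}$ differentiates the component sitting at vertex $\alpha_i$, attaches a fresh open mode to it and immediately contracts that mode against $\vector{y}'=\vector{f}$; diagrammatically this adds a new contraction edge $k_i$ from the upper right of vertex $\alpha_i$ together with a new leaf. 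After relabelling — root $\mapsto 1$, and the endpoint of edge $k_j$ farther from the root $\mapsto j+1$ — this produces a rooted ordered tree $\Phi(\alpha_1,\,\ldots,\,\alpha_k)$ on $\{1,\,\ldots,\,k+1\}$ in which, for each $i=1,\,\ldots,\,k$, the parent of vertex $i+1$ is the vertex labelled $\alpha_i$. Because at step $i$ only vertices $1,\,\ldots,\,i$ exist and the new edge is drawn to the right of all existing ones, the children of any vertex are created in order of increasing label from left to right.

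First I would check that $\Phi$ lands in the set of valid labelled trees: condition (1) is immediate; condition (2) is exactly the ``upper right'' convention just noted; condition (3) holds since the parent of vertex $i+1$ is $\alpha_i\le i<i+1$, so every edge joins a smaller label to a larger one. Conversely, I would define $\Psi$ on a valid labelled tree $T$ of order $k+1$ by $\Psi(T)=(\alpha_1,\,\ldots,\,\alpha_k)$ with $\alpha_i$ the label of the parent of vertex $i+1$ in $T$; this is well defined because every non-root vertex has a unique parent, and $\alpha_i\le i$ follows from condition (3). The key observation is thus that the constraint $\alpha_i\le i$ on differentiation paths is precisely the translation of the ``ancestors have smaller labels'' condition (3) under the convention that vertex $i+1$ is the vertex born at the $i$-th differentiation.

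It then remains to verify $\Psi\circ\Phi=\mathrm{id}$ and $\Phi\circ\Psi=\mathrm{id}$. The first is immediate: by construction the parent of vertex $i+1$ in $\Phi(\alpha_1,\,\ldots,\,\alpha_k)$ is $\alpha_i$, so reading off parents returns the path. For the second, one argues that the parent function of $T$ together with condition (2) determines $T$ as an ordered labelled tree, so that rebuilding the tree from $\Psi(T)$ — attaching each vertex $i+1$ as the rightmost child of its recorded parent — reproduces $T$ exactly; here one uses that, under condition (2), the left-to-right order of the children of a vertex coincides with the increasing order of their labels, which is precisely how $\Phi$ orders them. I expect this last bookkeeping of the planar (left-to-right) structure, together with the careful matching of the index shift ($\alpha_i\leftrightarrow$ vertex $i+1\leftrightarrow$ edge $k_i$) against the two labelling conventions, to be the only delicate point; everything else is a direct unwinding of definitions. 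As a sanity check, the constraint $\alpha_i\le i$ yields exactly $k!$ paths, while \Cref{numberofvalidlabelledtree} together with the classical identity $\sum_{|T|=k+1}\alpha(T)=k!$ gives the same count for valid labelled trees of order $k+1$.
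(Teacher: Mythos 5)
Your proposal is correct and takes essentially the same route as the paper: your $\Psi$ is the paper's map $T\mapsto(\varphi_T(k),\ldots,\varphi_T(1))$ reading off the parent of vertex $i+1$, and your $\Phi$ is the paper's inductive construction of a preimage by attaching vertex $i+1$ to vertex $\alpha_i$. The only organizational difference is that you verify the two maps are mutually inverse directly, whereas the paper argues injectivity and surjectivity separately; the substance, including the index shift $\alpha_i\leftrightarrow$ parent of vertex $i+1$ and the role of condition (2) in fixing the planar representative, is the same.
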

\begin{proof}
    For a labelled tree $T$, let $\mathbb{E}$ denote the set of all edges $(i,j)$, ordered such that $i<j$. For any $j\in\{1,\,2,\,\ldots,k\}$, if $(i,j+1)\in\mathbb{E}$, let us define $\varphi_T(j)=i$. Due to $j+1>i$, we have $\varphi_T(j)\leq j$. Since each $j\in\{1,\,2,\,\ldots,k\}$ corresponds to a unique parent vertex $i$ (connected from below to the vertex labelled $j+1$), and because $T$ follows an upward-growing representation, the function $\varphi_T(j)$ is well-defined and satisfies $\varphi_T(j)\leq j$. Moreover, distinct labelled trees $T_1\neq T_2$ yield distinct functions $\varphi_{T_1}\neq \varphi_{T_2}$. 
    
    Define a mapping $\phi$ that assigns each valid labelled tree $T$ of order $k+1$ to the sequence ${\varphi_T(k),\,\ldots,\,\varphi_T(1)}$. It is easy to verify that the mapping $\phi$ is well-defined. Since $\varphi_T(j)\leq j$,  the sequence ${\varphi_T(k),\,\ldots,\,\varphi_T(1)}$ represents a valid differentiation path. The injectivity of $\phi$ follows directly from the fact that $\varphi_{T}$ is distinct for different trees $T$.
    We now prove that $\phi$ is surjective by induction. For the base case, when $k=1$, the differentiation path $\alpha_1=1$ clearly corresponds to a valid labelled tree of order 2. Next, let us construct a preimage $T$ for any given differentiation path ${\alpha_{k-1},\,\ldots,\,\alpha_1}$. Assume as the induction hypothesis that there exists a labelled tree $\tilde{T}$ of order $k$ being preimage of the differentiation path ${\alpha_{k-1},\,\ldots,\,\alpha_1}$. Let $\varphi_{\tilde{T}}:\,\{1,\,2,\,\ldots,k-1\}\rightarrow \{1,\,2,\,\ldots,k-1\}$  be the associated mapping. To construct a tree $T$ of order $k+1$ corresponding to the extended differentiation path ${\alpha_k,\,\alpha_{k-1},\,\ldots,\,\alpha_1}$, we attach a new leaf with label $k+1$ to the $(\alpha_k)^{\mathrm{th}}$ vertex of $\tilde{T}$.
    The resulting mapping $\varphi_T$ is well-defined as
    $$
    \varphi_T(j)=\left\{\begin{aligned}
        \varphi_{\tilde{T}}(j),\,j<k,\\
        \alpha_k,~~~\,j=k.
    \end{aligned} 
    \right.
    $$
    Thus, the labelled tree $T$ is the preimage of the differentiation path ${\alpha_k,\,\alpha_{k-1},\,\ldots,\,\alpha_1}$, and $\phi$ is a surjective.
    Since $\phi$ is both injective and surjective, it is a bijection, completing the proof.
\end{proof}

\begin{corollary}
    As there are $k!$ distinct different differentiation paths, and $a_{k+1}$ counts valid labelled trees of order $k+1$ (satisfying conditions (1)-(3)),  the bijection implies: $$a_{k+1}=\sum_{|T|=k+1}\alpha(T)=k!.$$
\end{corollary}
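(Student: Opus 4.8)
The plan is to read the identity off directly from the two results just established, so the argument is essentially bookkeeping. First I would count differentiation paths without reference to any tree: a differentiation path of length $k$ is a sequence $(\alpha_k,\,\ldots,\,\alpha_1)$ with $\alpha_i\in\{1,\,\ldots,\,i\}$, and the choices in the successive coordinates are independent, so there are exactly $\prod_{i=1}^{k} i = k!$ such paths.

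Next I would invoke \Cref{OnetoonemapbetweenlabelledtreeandTTN}, which provides a bijection $\phi$ between valid labelled trees of order $k+1$ and differentiation paths $(\alpha_k,\,\ldots,\,\alpha_1)$ with $\alpha_i\le i$. Since $a_{k+1}$ is by definition the number of valid labelled trees of order $k+1$, this bijection gives $a_{k+1}=k!$ at once.

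For the remaining equality I would partition the set of valid labelled trees of order $k+1$ according to the underlying unlabelled tree obtained by erasing all labels. Each valid labelled tree has a unique such shadow $T$, necessarily with $|T|=k+1$ (it has $k+1$ vertices); conversely, the valid labelled trees with shadow $T$ are exactly the labellings of $T$ that satisfy conditions (1)–(3). By \Cref{numberofvalidlabelledtree} there are $\alpha(T)$ of these, so summing over the finitely many unlabelled trees of order $k+1$ yields $a_{k+1}=\sum_{|T|=k+1}\alpha(T)$. Equating the two expressions for $a_{k+1}$ gives $\sum_{|T|=k+1}\alpha(T)=k!$.

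The only point needing a word of care—and the closest thing to an obstacle—is verifying that the label-erasing map is a well-defined map onto unlabelled trees of order $k+1$ whose fibres are precisely the objects counted by \Cref{numberofvalidlabelledtree}; this is immediate, since conditions (1)–(3) are stated for labellings of a fixed tree. No induction and no further computation beyond what the two theorems supply are required.
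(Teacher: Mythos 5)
Your proposal is correct and follows exactly the route the paper intends: count the differentiation paths directly as $\prod_{i=1}^k i = k!$, use the bijection of \Cref{OnetoonemapbetweenlabelledtreeandTTN} to identify this with $a_{k+1}$, and partition the valid labelled trees by their underlying unlabelled tree so that \Cref{numberofvalidlabelledtree} gives $a_{k+1}=\sum_{|T|=k+1}\alpha(T)$. The paper leaves these steps implicit in the corollary statement itself, so your write-up simply makes the same argument explicit.
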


According to \Cref{lemma:relationshipbetweenTTNandF}, two differentiation paths are equivalent if there corresponding to the same unlabelled tree. It is also interesting to obtain this equivalence from the `commutativity' between tensor partial derivatives or `commutativity' in the sequence of differentiation path. 
To investigate this, we define a modified Butcher product of trees as
$$
[T_1] \hatcirc [T_2] = [T_1\,T_2]=[T_1] \circ T_2,
$$ 
which means that the root of $[T_2]$ is removed, and the subtree $T_2$ is attached to the root of $[T_1]$. Similarly, we have 
$$ [T]\hatcirc[\tilde{T}_1\, \tilde{T}_2 \dots \tilde{T}_l] = [T\, \tilde{T}_1 \dots \tilde{T}_l] = [T]\hatcirc[\tilde{T}_1]\hatcirc[\tilde{T}_2] \hatcirc \cdots \hatcirc[\tilde{T}_l]. 
$$
When the trees are represented as TTNs $\tensor{F}_{\alpha_k,\,\ldots,\,\alpha_1}$ and ${\tensor{F}}_{\tilde{\alpha}_l,\,\ldots,\,\tilde{\alpha}_1}$, the modified Butcher product can be represented as:
\begin{equation}
    \label{modifiedButchertreeproduct}
    \tensor{F}_{\alpha_k,\,\ldots,\,\alpha_1} \hatcirc \tensor{F}_{\tilde{\alpha}_l,\,\ldots,\,\tilde{\alpha}_1}=\tensor{F}_{\hat{\alpha}_l,\,\ldots,\,\hat{\alpha}_1,\,\alpha_k,\,\ldots,\,\alpha_1},
\end{equation}
where
$
\hat{\alpha}_i = \left\{\begin{aligned}
    &1 ,\quad \hbox{if }\, \tilde{\alpha}_i=1,\\
    &\tilde{\alpha}_i+ k,\quad \hbox{else if }\,\tilde{\alpha}_i>1.
\end{aligned}\right.
$

For $i_0\in[1,\,k]$, the modified Butcher product $\hatcirc_{i_0} $ can be represented as:
\begin{equation}
    \label{modifiedButchertreeproduct-1}
    \tensor{F}_{\alpha_k,\,\ldots,\,\alpha_1}\hatcirc_{i_0}\,{\tensor{F}}_{\tilde{\alpha}_l,\,\ldots,\,\tilde{\alpha}_1}=\tensor{F}_{\hat{\alpha}_l,\,\ldots,\,\hat{\alpha}_1,\,\alpha_k,\,\ldots,\,\alpha_1},
\end{equation}
where
$
\hat{\alpha}_i = \left\{\begin{aligned}
    &i_0 ,\quad \hbox{if }\, \tilde{\alpha}_i=1,\\
    &\tilde{\alpha}_i+ k,\quad \hbox{else if }\,\tilde{\alpha}_i>1.
\end{aligned}\right.
$
The modified Butcher product $\hatcirc_{i_0} $ means that we remove the root of the second TTN and connect all sub-TTNs to the $i_0^{\mathrm{th}}$ leaf of the first one. It is clear that $\hatcirc=\hatcirc_{1}$. 

\begin{lemma}
For any tree $T=[T_1\ldots T_l]$, we have 
\begin{equation}
T=[T_1]\circ T_2\circ \cdots \circ T_l=[T_1]\hatcirc [T_2]\hatcirc \cdots \hatcirc [T_l],
\end{equation}
and 
\begin{equation}
T(\vector{f})=\left(\vector{f}'(\vector{y})T_1(\vector{f})\right)\circ T_2(\vector{f})\circ \cdots \circ T_l(\vector{f}).
\end{equation}
Furthermore, assuming that the TTNs for trees $T$, $T_1,\,\ldots,\,T_l$, are $\tensor{F}$,  $\tensor{F}^1,\,\ldots,$ and $\tensor{F}^l $, we get
\begin{equation}
\label{decompositionofbutcherproduct}
\tensor{F} = \hat{\tensor{F}} \circ \tensor{F}^1 \circ \cdots \circ \tensor{F}^l,
\end{equation}
where $\hat{\tensor{F}}$ corresponds to unlabelled tree $\bullet $.
\end{lemma}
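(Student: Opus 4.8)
The plan is to obtain all three identities by unfolding the definitions of the Butcher product $\circ$ and the modified Butcher product $\hat\circ$, combined with a short induction on the number $l$ of principal subtrees of $T$; I first establish the purely combinatorial (tree-level) equalities and then transport them to TTNs through the definition of $T(\vector{f})$ and the TTN Butcher product \eqref{Butchertreeproduct}. Throughout I adopt the convention that an iterated product $a\circ b\circ\cdots\circ z$ is evaluated from left to right, a choice that must be made explicit because $\circ$ is not associative.

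\emph{Tree-level identities.} By definition $[S]\circ T_j=[S\,T_j]$, so starting from the tree $[T_1]$ and appending $T_2,\ldots,T_l$ one subtree at a time gives, by induction on $l$, $\bigl(([T_1]\circ T_2)\circ\cdots\bigr)\circ T_l=[T_1T_2\cdots T_l]=T$; the degenerate case $l=0$ is $T=\bullet$ with the empty product. For the second equality, the iterated rule $[S]\hat\circ[\tilde T_1\cdots\tilde T_m]=[S\,\tilde T_1\cdots\tilde T_m]$ recorded just above specialises to $[S]\hat\circ[T_j]=[S\,T_j]=[S]\circ T_j$, so the two iterated products agree factor by factor and both equal $T$.

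\emph{TTN identities.} The key observation is that, by \eqref{Butchertreeproduct}, Butcher-multiplying on the right by $T_j(\vector{f})$ sends a TTN whose root component is $\vector{f}^{(j-1)}(\vector{y})$ with attached sub-TTNs $T_1(\vector{f}),\ldots,T_{j-1}(\vector{f})$ to the TTN with root component $\vector{f}^{(j)}(\vector{y})$ and sub-TTNs $T_1(\vector{f}),\ldots,T_j(\vector{f})$. Since $\vector{f}'(\vector{y})T_1(\vector{f})$ is precisely the TTN of the tree $[T_1]$, inductively applying $\circ\,T_2(\vector{f}),\ldots,\circ\,T_l(\vector{f})$ produces $\vector{f}^{(l)}(\vector{y})T_1(\vector{f})\cdots T_l(\vector{f})$, which by the definition of $T(\vector{f})$ for $T=[T_1\ldots T_l]$ equals $T(\vector{f})$; this is the second identity. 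The last identity \eqref{decompositionofbutcherproduct} then follows at once: $\hat{\tensor{F}}$ is the TTN of $\bullet$, i.e. $\hat{\tensor{F}}=\vector{f}(\vector{y})=\vector{f}^{(0)}(\vector{y})$, so by \eqref{Butchertreeproduct} $\hat{\tensor{F}}\circ\tensor{F}^1=\vector{f}'(\vector{y})T_1(\vector{f})$; substituting this into the second identity with $\tensor{F}^j=T_j(\vector{f})$ and $\tensor{F}=T(\vector{f})$, and reading the product left to right, gives $\tensor{F}=\hat{\tensor{F}}\circ\tensor{F}^1\circ\cdots\circ\tensor{F}^l$. If one wishes the statement to be insensitive to the order of $T_1,\ldots,T_l$, one may invoke the symmetry of $\vector{f}^{(l)}(\vector{y})$, i.e. \Cref{tensorsymlemma}, but the argument above never needs to reorder the factors.

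The only real subtlety — and hence the main obstacle — is bookkeeping rather than mathematics: one must pin down the left-to-right evaluation of the non-associative product $\circ$, handle the degenerate case $l=0$, and verify that the TTN Butcher product in \eqref{Butchertreeproduct} is well defined when its arguments are presented merely as TTNs, their root derivative orders and attached sub-TTNs being unambiguously recoverable (as guaranteed by \Cref{lemma:relationshipbetweenTTNandF}). Once these conventions are fixed, every line is a direct substitution of a defining equation, with no estimates or further constructions required.
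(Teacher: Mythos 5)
Your proposal is correct and follows the same route the paper intends: the paper dispenses with the proof in one line ("follows directly from the definition of the Butcher product of trees"), and your argument is simply that unfolding, carried out carefully with the left-to-right evaluation convention and a short induction on $l$. The added attention to non-associativity and to recovering the root data of a TTN is sensible bookkeeping but does not change the substance.
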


The proof of this lemma follows directly from the definition of the Butcher product of trees. We then utilize the (modified) Butcher product to establish the equivalence of differentiation paths, thereby avoiding the need to explicitly construct the underlying unlabelled tree. For a given TTN $\tensor{F}_{\alpha_k,\,\ldots,\,\alpha_1}$, let us move all indices with the largest value to the left. In this procedure, since we only adjust the labels of vertices not involved in differentiation, the topology of the corresponding unlabelled tree remains unchanged. Denote $\beta=\max_{i=1}^k\alpha_i$, and suppose there are $s$ indices $\alpha_i$ equal to $\beta$. We can write down this procedure using modified Butcher product as:
$$
\tensor{F}_{\alpha_k,\,\ldots,\,\alpha_1}=\tensor{F}_{\tilde{\alpha}_{k-s},\,\ldots,\,\tilde{\alpha}_1} \hatcirc_{\beta-1}\tensor{F}_{11\ldots1},
$$
where $\tensor{F}_{11 \ldots 1}$ corresponds to an $s+1$ order unlabelled tree and the subscripts  $\beta-1$ indicate that the root of $\tensor{F}_{11\ldots1}$ is generated by taking the  $(\beta-1)^{\mathrm{th}}$ partial derivative $\partial_{\tilde{\alpha}_{\beta-1}}$ in the differentiation path ${\tilde{\alpha}_{k-s},\,\ldots,\,\tilde{\alpha}_1}$. Here, the sequence ${\tilde{\alpha}_{k-s},\,\ldots,\,\tilde{\alpha}_1}$ is obtained by removing all entries $\alpha_i=\beta$ from the sequence ${\alpha_k,\,\ldots,\,\alpha_1}$. 
By applying this process recursively, we obtain the decomposition given in \eqref{decompositionofbutcherproduct}. Let us use $\tensor{F}_{1,5,5,3,1,1,2,1}$ as an example to illustrate this process. 
$$
\begin{aligned}
\tensor{F}_{1,5,5,3,1,2,1,1}&=\tensor{F}_{1,3,1,2,1,1} \hatcirc_{4}\tensor{F}_{11}\\
&=\tensor{F}_{1,1,2,1,1}\hatcirc_{2}\tensor{F}_{1} \hatcirc_{4}\tensor{F}_{11}=\tensor{F}_{1,1,1,1}\hatcirc_{1}\tensor{F}_{1}\hatcirc_{2}\tensor{F}_{1} \hatcirc_{3}\tensor{F}_{11}\\
&=\tensor{F}_{1}\circ\tensor{F}_1\circ\tensor{F}_{1} \circ\tensor{F}_{11}.    
\end{aligned}
$$
During this procedure, the subscript of $\hatcirc$ should be updated according to the updated number of leaves.

\Cref{OnetoonemapbetweenlabelledtreeandTTN} implies that valid labelled trees uniquely determine the differentiation path. By combining \Cref{lemma:relationshipbetweenTTNandF}, \Cref{numberofvalidlabelledtree}, and \Cref{OnetoonemapbetweenlabelledtreeandTTN}, we conclude that there exist exactly $\alpha(T)$ differentiation paths to obtain the TTN $T(\vector{f})$. Finally, we compute high-order derivatives of $\vector{y}$ in the following theorem.

\begin{theorem}
\label{theorem-exact-high-order-derivative}
Assume that $\vector{y}$ is a solution of ODEs system \eqref{eqn:ode-system}. Then, the $(k+1)^\mathrm{th}$ order derivative of $\vector{y}$ has the following expression 
    \begin{equation}\label{eqn:ode-expansion-a}
    \vector{y}^{(k+1)} =  \sum_{|T|=k+1} \alpha(T) T(\vector{f}).
\end{equation}
\end{theorem}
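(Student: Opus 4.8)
The plan is to combine the two ingredients established earlier in the section: the decomposition of the total derivative into differentiation paths, and the bijective count of such paths. Concretely, I would start from the ODE constraint $\vector{y}'(t) = \vector{f}(\vector{y}(t))$, which reduces computing $\vector{y}^{(k+1)}$ to computing $\frac{\dd^k}{\dd t^k}(\vector{f}(\vector{y}))$. By \eqref{eqn:high-order-derivative-path-2}, this $k^{\mathrm{th}}$ derivative is exactly the sum over all differentiation paths $\alpha_k \le k, \ldots, \alpha_1 \le 1$ of the TTNs $\tensor{F}_{\alpha_k,\ldots,\alpha_1}[\mathbb{E}_k]$, a sum of $k!$ terms.

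Next I would group these $k!$ terms according to the unlabelled tree they represent. \Cref{lemma:relationshipbetweenTTNandF} tells us that each $\tensor{F}_{\alpha_k,\ldots,\alpha_1}[\mathbb{E}_k]$ equals $T(\vector{f})$ for the unlabelled tree $T$ of order $k+1$ obtained from its TTN configuration, so every term in the sum collapses to some $T(\vector{f})$. The remaining task is purely combinatorial: for a fixed unlabelled tree $T$ with $|T| = k+1$, how many differentiation paths yield $T(\vector{f})$? By \Cref{OnetoonemapbetweenlabelledtreeandTTN} the differentiation paths of length $k$ are in bijection with the valid labelled trees of order $k+1$, and by \Cref{numberofvalidlabelledtree} the number of valid labellings of a given $T$ is exactly $\alpha(T)$. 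Hence exactly $\alpha(T)$ of the $k!$ paths produce $T(\vector{f})$, and summing over all unlabelled trees of order $k+1$ gives
\begin{equation*}
\vector{y}^{(k+1)} = \frac{\dd^k}{\dd t^k}\bigl(\vector{f}(\vector{y})\bigr) = \sum_{\alpha_1 \le 1,\ldots,\alpha_k \le k} \tensor{F}_{\alpha_k,\ldots,\alpha_1}[\mathbb{E}_k] = \sum_{|T|=k+1} \alpha(T)\, T(\vector{f}),
\end{equation*}
with the consistency check $\sum_{|T|=k+1}\alpha(T) = k!$ coming from the corollary.

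I do not expect a serious obstacle here, since all the heavy lifting is done by the earlier lemmas and theorems; the proof is essentially an assembly. The one point that deserves care is making the correspondence "TTN configuration $\mapsto$ unlabelled tree" precise enough that \Cref{lemma:relationshipbetweenTTNandF} applies uniformly to every term — i.e., checking that differentiating $\frac{\dd}{\dd t}$ of a composite-function TTN really does produce, for each path, a TTN whose diagram is (the labelled version of) an unlabelled tree of the right order, with each vertex carrying $\vector{f}^{(i)}(\vector{y})$ where $i$ is its number of children. This was already set up in \Cref{sec:3.1} and in \eqref{eq:derivativeforcompositefunctional}, so it amounts to invoking that construction rather than redoing it. A brief induction on $k$ could also be used to organize the argument, but given \Cref{OnetoonemapbetweenlabelledtreeandTTN} and \Cref{numberofvalidlabelledtree} it is cleaner to present the result as a direct counting identity, which is precisely the "no induction needed" point emphasized in the introduction.
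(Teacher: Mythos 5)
Your proposal is correct and follows essentially the same route as the paper's own proof: both start from \eqref{eqn:high-order-derivative-path-2} to write $\vector{y}^{(k+1)}$ as a sum of $k!$ TTNs over differentiation paths, then invoke Lemma~\ref{lemma:relationshipbetweenTTNandF} to identify each term with $T(\vector{f})$ and Theorems~\ref{OnetoonemapbetweenlabelledtreeandTTN} and~\ref{numberofvalidlabelledtree} to count the multiplicity $\alpha(T)$. Your write-up is slightly more explicit about the counting step, but the argument is the same assembly of the earlier results.
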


\begin{proof}
    Using \eqref{eqn:high-order-derivative-path-2}, we have
    \begin{equation}
        \vector{y}^{(k+1)}=\sum_{\alpha_1 \leq 1, \, \ldots, \, \alpha_k \leq k} \tensor{F}_{\alpha_k,\,\ldots,\,\alpha_1},
\end{equation}
which represents $\vector{y}^{(k+1)}$ as a summation of $k!$ TTNs. According to \Cref{lemma:relationshipbetweenTTNandF}, \Cref{OnetoonemapbetweenlabelledtreeandTTN}, and the multiplicity of $T(\vector{f})$, we simplify $\vector{y}^{(k+1)}$ as:
  \begin{equation}
    \vector{y}^{(k+1)} =  \sum_{|T|=k+1} \alpha(T) T(\vector{f}),
\end{equation}
which completes the proof.
\end{proof}
With the definition of $T(\vector{f})$ and the TTN's derivatives introduced in \Cref{TTNderivatives}, we can express any high order derivative of $\vector{y}$ as the summation of TTNs $T(\vector{f})$. This result is formally stated in \Cref{theorem-exact-high-order-derivative}, which is identical to Theorem 311C in \cite[Chapter 3]{butcher2016numerical}. In that context, $T(\vector{f})$ corresponds to the ‘elementary differential' as defined in Definition 310A of \cite[Chapter 3]{butcher2016numerical}. However, it is important to note that while our final result aligns with that of \cite{butcher2016numerical}, our proof methodology follows a different approach.

\subsection{Differentiation following layer-wise growth of trees}
\label{subsec:Differentiation following layer-wise growth of trees}
In this subsection, we utilize TTN derivatives to introduce a layer-wise growth concept and provide an alternative proof for \Cref{theorem-exact-high-order-derivative} based on this insight.
Similar to \eqref{eq:derivativeforcompositefunctional}, taking the $k^{\mathrm{th}}$ order derivative with respect to $t$ on both sides of \eqref{eqn:ode-system} and using the chain rule, the $(k+1)^\mathrm{th}$ order derivative of $\vector{y}$ has the following expression:
\begin{equation}\label{eqn:high-order-derivative-y-general-expression}
    \vector{y}^{(k+1)} = (\vector{f}(\vector{y}))^{(k)} = \sum \vector{f}^{(l)}(\vector{y}) \; \vector{y}^{(j_1)} \cdots \vector{y}^{(j_l)},
\end{equation}
where $l\leq k$, $j_1 + \cdots + j_l = k$ with $j_i\geq 1$.
Here the summation runs over all TTNs generated  recursively using \eqref{eq:derivativeforcompositefunctional}. For $k=0,\,1,\,2,\,3,\,4$, the total number of TTNs in this summation is 1, 1, 2, 5, and 15, respectively. The structures of these TTNs are determined by the multiple indices $j_1,\ldots,j_l$ and some may share the same index combinations. For a given $k$, these TTNs can be visualized using TTN diagrams, as shown in \Cref{fig:tree-tensor-network-ode-constraint}. Due to the partial symmetric property \eqref{symmetryproperty}, the ordering of indices $j_1,\ldots,j_l$ does not affect the final contraction result. Consequently, different feasible index choices for $j_1,\ldots,j_l$, such as $\vector{f}^{(3)}(\vector{y}) \; \vector{y}' \vector{y}'' \vector{y}^{(3)}$ and $\vector{f}^{(3)}(\vector{y}) \; \vector{y}' \vector{y}^{(3)} \vector{y}''$, may lead to identical TTNs. We do not explicitly account for the two types of multiplicity of these TTNs at this stage; instead, we incorporate it after substituting the constraint $\vector{y}'=\vector{f}(\vector{y})$ in \eqref{eqn:high-order-derivative-y-general-expression}.



\begin{figure}[htb]
    \centering
    \subcaptionbox{}{
\ifmytrue
    \begin{forest}
    midtree
       [,fill=none  [,label=right:$\vector{f}(\vector{y})$
        ] ]
    \end{forest}
\fi
    }
    $\rightarrow$
    \subcaptionbox{}{
\ifmytrue
    \begin{forest}
    midtree
       [,fill=none  [,label=right:$\vector{f}'(\vector{y})$
            [,label=above:$\vector{y}'$]
        ] ]
    \end{forest}
\fi
    }
     $\rightarrow$ 
     \subcaptionbox{}{
\ifmytrue
     \begin{forest}
    midtree
       [,fill=none  [,label=right:$\vector{f}''(\vector{y})$
            [,label=above:$\vector{y}'$]
            [,label=above:$\vector{y}'$]
        ] ]
    \end{forest} 
\fi
     }
    \subcaptionbox{}{
\ifmytrue
    \begin{forest}
    midtree
       [,fill=none  [,label=right:$\vector{f}'(\vector{y})$
            [,label=above:$\vector{y}''$]
        ] ]
    \end{forest}
\fi
    }
    $\rightarrow$ 
    \subcaptionbox{}{
\ifmytrue
    \begin{forest}
    midtree
       [,fill=none  [,label=right:$\vector{f}^{(l)}$
            [,label=above right:$\vector{y}^{(j_{l})}$]
            [,label={[xshift=4pt]above:$\vector{y}^{(j_{l-1})}$}]
            [$\ldots$,fill=none,no edge]
            [,label={[xshift=5pt]above:$\vector{y}^{(j_2)}$}]
            [,label={[xshift=10pt]above left:$\vector{y}^{(j_1)}$}]
        ] ]
    \end{forest}
\fi
    }
    \caption{The TTN diagrams of TTNs in the summation of $\vector{y}^{(k+1)}$. (a) $k=0$; (b) $k=1$; (c-d) $k=2$; (e) $k>2$.}
    \label{fig:tree-tensor-network-ode-constraint}
\end{figure}

By recursively substituting the lower order derivatives of $\vector{y}$ into \eqref{eqn:high-order-derivative-y-general-expression}, we can finally represent $\vector{y}^{(k+1)}$ as a summation of TTNs $T(\vector{f})$. This recursive process is equivalent to the growth of a tree, layer by layer, from its root. 
Using the notation $T(\vector{f})$, we further expand \eqref{eqn:high-order-derivative-y-general-expression} as
\begin{equation}\label{eqn:ode-expansion}
    \vector{y}^{(k+1)} = (\vector{f}(\vector{y}))^{(k)} = \sum_{|T|=k+1} \tilde{\alpha}(T) T(\vector{f}),
\end{equation}
where the TTN $ T(\vector{f})\in\mathbb{R}^d $ and the scale $ \tilde{\alpha}(T)$ represents the multiplicity of $T(\vector{f})$. 
This procedure offers an insight into the layer-wise growth of trees. By replacing $ \tilde{\alpha}(T)$ with ${\alpha}(T)$, we can complete the proof of \Cref{theorem-exact-high-order-derivative}.
Next, we introduce an alternative method to determine it, based on the layer-wise growth insight of the tree.

To identify the multiplicity $\tilde{\alpha}(T)$, we employ the symmetry property \eqref{symmetryproperty} and reorganize the terms $\vector{y}^{(j_i)}$ in ascending order of their indices $j_1,\,\ldots,\,j_l$. This ordering procedure transforms \eqref{eqn:high-order-derivative-y-general-expression} into the canonical form:
\begin{equation}
\label{eqn:high-order-derivative-y-multiplicity}
    (\vector{f}(\vector{y}))^{(k)} = \sum_{\vector{m} \in \cup_{l=1}^{k} S_{k,l}} \eta(\vector{m}) \vector{f}^{(l)}(\vector{y}) (\vector{y}')^{m_1} (\vector{y}'')^{m_2} \cdots (\vector{y}^{(k)})^{m_k},
\end{equation}
where $\eta(\vector{m}) \in \mathbb{R}$ is the combinatorial multiplicity factor to be determined and the index sets are defined as:
\begin{equation}
\label{m-set}
S_{k,l} = \left\{ \vector{m} = (m_1,\, \dots,\, m_k) \in \mathbb{N}_0^{k} \Big| \sum_{i=1}^k m_i = l,\, \sum_{i=1}^k i m_i = k \right\}.
\end{equation}
Here $m_i$ represents the multiplicity of $\vector{y}^{(i)}$ in each term of the summation of \eqref{eqn:high-order-derivative-y-general-expression}, with $m_i=0$ indicating the absence of $\vector{y}^{(i)}$ in that term. In \eqref{eqn:high-order-derivative-y-multiplicity} and throughout the remainder of this proof, we employ the compact notation defined in \eqref{symmetryvectorpower} to represent contractions between tensors and vectors.


To determine $\eta(\vector{m})$, we observe that its values remain unchanged for both scalar-valued and vector-valued functions $\vector{y}$. To derive its explicit form, we consider the scalar case with the Taylor expansion: 
\begin{equation*}
    y(t) \equiv \sum_{i=1}^{k} y^{(i)}(0) t^{i} / i!.
\end{equation*}
For the monomial function $f(y)=y^{l_0}$, where $f^{(l)}(y)|_{t=0}={\delta_{l_0,l}}{l!}$, we compute: 
\begin{equation}\label{eqn:special-y}
    \begin{split}
        \frac{\dd^k}{\dd t^k} & (f(y)) \big|_{t=0} 
        = \frac{\dd^k}{\dd t^k} \left[ \left( \sum_{i=1}^{k} y^{(i)}(0) \frac{t^i}{i!} \right)^{l_0} \right] \Bigg|_{t=0} \\
        &= \sum_{\vector{m} \in  S_{k,l_0}}  k!{\binom{l_0}{\vector{m}} } \left(\frac{y'(0)}{1!}\right)^{m_1} \left(\frac{y''(0)}{2!}\right)^{m_2} \cdots \left(\frac{y^{(k)}(0)}{k!}\right)^{m_k} \\
        &= \sum_{\vector{m} \in \cup_{l=1}^{k} S_{k,l}} \delta_{l_0,l} k!{\binom{l}{\vector{m}} } \left(\frac{y'(0)}{1!}\right)^{m_1} \left(\frac{y''(0)}{2!}\right)^{m_2} \cdots \left(\frac{y^{(k)}(0)}{k!}\right)^{m_k} \\
        &= \sum_{\vector{m} \in \cup_{l=1}^{k} S_{k,l}}  \frac{\binom{l}{\vector{m}} k!}{(1!)^{m_1} (2!)^{m_2} \cdots (k!)^{m_k}} \frac{f^{(l)}(y)\big|_{t=0}}{l!} (y'(0))^{m_1} \cdots (y^{(k)}(0))^{m_k},
    \end{split}
\end{equation}
where $\delta$ denotes the Kronecker delta.
Due to the arbitrariness of $y^{(i)}(0)$, by comparing \eqref{eqn:high-order-derivative-y-multiplicity} and  \eqref{eqn:special-y}, we obtain 
\begin{equation}\label{eqn:eta}
    \eta(\vector{m}) =\frac{\tilde{\eta}(\vector{m})}{\hat{\eta}(\vector{m})}\quad\hbox{ for } \vector{m}\in  S_{k,l_0},
\end{equation}
where
\begin{equation}
\begin{aligned}
    \tilde{\eta}(\vector{m}) &= \frac{k!}{(1!)^{m_1} (2!)^{m_2} \cdots (k!)^{m_k} },\\
    \hat{\eta}(\vector{m})&={m_1! m_2! \cdots m_k!}.
    \end{aligned}
\end{equation}

Changing $l_0\in\{1,\,2,\,\ldots\}$, we confirm that \eqref{eqn:eta} holds for all $\vector{m}\in S_{k,l}$,
which coincides exactly with the Fa\`a di Bruno's formula \cite[pages 35-37]{riordan2014introduction}. The function $\eta$, given by \eqref{eqn:eta}, represents the contribution of taking derivatives at the root layer to the multiplicity $\alpha(T)$. 

We proceed by induction to complete the proof for \Cref{theorem-exact-high-order-derivative}. Assume that \eqref{eqn:ode-expansion-a} holds for $\vector{y}^{(j)}$ for $j<k+1$. According to \eqref{eqn:high-order-derivative-y-multiplicity}, we have 
\begin{equation}
\label{eq:theorem-5}
   \vector{y}^{(k+1)} = \sum_{\vector{m} \in \cup_{l=1}^{k} S_{k,l}} \eta(\vector{m}) \vector{f}^{(l)}(\vector{y}) \prod_{i=1}^{k}\left(\sum_{|\tilde{T}|=i}\alpha(\tilde{T})\tilde{T}(\vector{f})\right)^{m_i},
\end{equation}
where $\tilde{T}(\vector{f})\in\mathbb{R}^d$. 
Assume that all distinct subtrees of $T$ have different orders, i.e., $|T_i|\neq |T_j|$ for $i\neq j $, $|T_i|>0$, and $|T_j|>0$. In this case, we can express $T$ as $T=[T_1^{m_1}\, T_2^{m_2}\cdots T_k^{m_k}]$, which leads to
\begin{equation}
\label{eq:theorem-1}
    T(\vector{f}) =  \vector{f}^{(l)}(\vector{y}) (T_1(\vector{f}))^{m_1} (T_2(\vector{f}))^{m_2} \cdots (T_k(\vector{f}))^{m_k}.
\end{equation}
By comparing \eqref{eq:theorem-5}, \eqref{eq:theorem-1}, and \eqref{eqn:ode-expansion}, we obtain
\begin{equation}
\label{eq:theorem-3}
\begin{aligned}
       \tilde{\alpha}(T) &= \eta(\vector{m})\prod_{i=1}^k \left(\alpha(T_i)\right)^{m_i}\\
       &=\frac{|T|!}{|T|\prod_{i=1}^k (i!)^{m_i}}\frac{\prod_{i=1}^k (|T_i|!)^{m_i}}{\prod_{i=1}^k (T_i!)^{m_i}}\frac{1}{\hat{\eta}({\vector{m}})\prod_{i=1}^k \sigma(T_i)^{m_i}}\\
       &=\frac{|T|!}{T!}\frac{1}{\sigma(T)}=\alpha(T).
\end{aligned}
\end{equation}
Now, suppose that some distinct subtrees of $T$ share the same order, i.e., $|T_i|= |T_j|>0$ for $i\neq j $. In this case, we denote the tree $T$ as $$T=[T_{1,1}^{m_{1,1}} \dots T_{1,n_1}^{m_{1,n_1}}\, T_{2,1}^{m_{2,1}}\dots T_{2,n_2}^{m_{2,n_2}} \ldots T_{k,1}^{m_{k,1}} \dots T_{k,n_k}^{m_{k,n_k}}],$$
and express its TTN representation as
\begin{equation}
\label{eq:theorem-2}
    T(\vector{f}) =  \vector{f}^{(l)}(\vector{y}) \prod_{j=1}^{n_1}(T_{1,j}(\vector{f}))^{m_{1,j}} \prod_{j=1}^{n_2}(T_{2,j}(\vector{f}))^{m_{2,j}} \cdots \prod_{j=1}^{n_k}(T_{k,j}(\vector{f}))^{m_{k,j}},
\end{equation}
where $\sum_{j=1}^{n_i} m_{i,j}=m_i$ and $|T_{i,j}| = |T_{i,j'}|=i$ hold for $i=1,\,2,\,\ldots,\,k$ and $j\neq j'$. Let us denote $\vector{m}_i=(m_{i,1},\,\ldots,\,m_{i,n_i})$. For any symmetric tensor $\matrix{B}\in\mathbb{R}^{d\times \cdots \times d}$ of order greater than $m_i$, we have
\begin{equation}
\begin{aligned}
       \matrix{B}(\vector{y}^{(i)})^{m_i}&= \matrix{B}\left(\sum_{|\tilde{T}_i|=i}\alpha(\tilde{T}_i)\tilde{T}_i(\vector{f})\right)^{m_i}\\&
       = {\binom{m_i}{\vector{m}_i} }\left(\prod_{j=1}^{n_i} \left(\alpha(T_{i,j})\right)^{m_{i,j}}\right)\matrix{B}\prod_{j=1}^{n_i} \left(T_{i,j}(\vector{f})\right)^{m_{i,j}} + \cdots.  
\end{aligned}
\end{equation}
Therefore, $\tilde{\alpha}(T)$ is computed as
\begin{equation}
\begin{aligned}
\label{eq:theorem-4}
       \tilde{\alpha}(T) &= \eta(\vector{m})\prod_{i=1}^k {\binom{m_i}{\vector{m}_i} } \left(\prod_{j=1}^{n_i} \left(\alpha(T_{i,j})\right)^{m_{i,j}}\right)\\
       &=\frac{|T|!}{|T|\prod_{i=1}^k (i!)^{m_i}}\frac{\prod_{i=1}^k \prod_{j=1}^{n_i} (|T_{i,j}|!)^{m_{i,j}}}{\prod_{i=1}^k \prod_{j=1}^{n_i}(T_{i,j}!)^{m_{i,j}}\sigma(T_{i,j})^{m_{i,j}}}\frac{\prod_{i=1}^k {\binom{m_i}{\vector{m}_i} }}{\hat{\eta}({\vector{m}})}\\
       &=\frac{|T|!}{|T|\prod_{i=1}^k (i!)^{m_i}}\frac{\prod_{i=1}^k (i!)^{m_i}}{\prod_{i=1}^k \prod_{j=1}^{n_i}(T_{i,j}!)^{m_{i,j}}}\frac{1}{\prod_{i=1}^k \prod_{j=1}^{n_i}m_{i,j}! \sigma(T_{i,j})^{m_{i,j}}}\\
       &=\frac{|T|!}{T!}\frac{1}{\sigma(T)}=\alpha(T),
\end{aligned}
\end{equation}
where the fact  $|T_{i,j}| =i$ is used. 

By combining \eqref{eq:theorem-3} and \eqref{eq:theorem-4}, we complete the proof of \Cref{theorem-exact-high-order-derivative} via mathematical induction.

\subsection{Labelling trees}\label{subsubsec:labelling-trees}

In \Cref{numberofvalidlabelledtree}, we established that the number of valid labelled trees corresponding to a given unlabelled tree $T$ is $\alpha(T)$, utilizing the enumeration of labelled trees provided in \cite[Theorem 305A]{butcher2016numerical}. In this subsection, we present an alternative, independent approach to counting the number of valid labelled trees.

For any given tree $T$, the generation of tree can be viewed hierarchically. For example, suppose that $T$ with labelled values ranging from $1$ to $9$ is a $9^\mathrm{th}$ order tree (e.g. \Cref{fig:valid-labelled-tree}), arising from the computation of $\vector{y}^{(9)}$. According to \eqref{eqn:high-order-derivative-y-multiplicity}, the first laryer of $T$ is generated with the structure shown in \Cref{subfig:computing-alpha-2(a)}. At the same time, the smallest label is assigned to the root, and the rest of the labels are partitioned into disjoint subsets, where the number of elements in each subset is determined by the order of the corresponding subtree (which is the order of derivatives taken on $\vector{y}$). A possible labelling scheme for $T$ at this stage is illustrated in \Cref{subfig:computing-alpha-2(b)}.


\begin{figure}[htb]
    \centering
    \subcaptionbox{Valid labelled tree for $\matrix{F}_{1,5,5,3,1,2,1,1}$: hierarchical separation of sets. \label{fig:valid-labelled-tree}}[0.3\textwidth]{
\ifmytrue
        \begin{forest}
        midtree
        [,label={[yshift=2pt]below:$1$}
            [,label={right:$9$}]
            [,label={[xshift=2pt]left:$5$}
                [,label=above:$8$][,label=above:$7$]
            ]
            [,label={[xshift=3pt]left:$3$}
                [,label=above:$6$]
            ]
            [,label={left:$2$}
                [,label=above:$4$]
            ]
        ]
        \end{forest}
\fi
    }
    \hfill
    \subcaptionbox{First layer of $T$ with the leaf denoted by $\vector{y}^{(k_j)}$.  \label{subfig:computing-alpha-2(a)} }[0.3\textwidth]{
\ifmytrue
        \begin{forest}
        bigtree
            [,fill=none
            [,label={[yshift=-3pt]right:$\vector{f}^{(4)}$}
                [,label=above:$\vector{y}'$][,label=above:$\vector{y}'''$][,label=above:$\vector{y}''$][,label=above:$\vector{y}''$]
            ]
            ]
        \end{forest}
\fi        
    }
    \hfill
    \subcaptionbox{A labelling scheme for $T$ at the first stage. \label{subfig:computing-alpha-2(b)} }[0.3\textwidth]{
\ifmytrue
        \begin{forest}
        bigtree
            [,fill=none
            [,label={[yshift=-3pt]right:{\small $1$}}
                [,label={[xshift=9pt]above:{\small $\{9\}$}}]
                [,label={[xshift=4pt]above:{\small $\{5,7,8\}$}}]
                [,label={[xshift=-5pt]above:{\small $\{3,6\}$}}]
                [,label={[xshift=-9pt]above:{\small $\{2,4\}$}}]
            ]
            ]
        \end{forest}
\fi
    }
    \caption{A labelled tree in the summation of $\vector{y}^{(9)}$.}
    \label{fig:computing-alpha-2}
\end{figure}

In general, when computing $\vector{y}^{(k+1)}$ with \eqref{eqn:high-order-derivative-y-general-expression}, if the underlying tree structure is $T = [T_1 \, \ldots \, T_l]$ with $|T| = k+1$, the smallest label is assigned to the root, and the rest $k$ labels are divided into $l$ sets, with size $|T_1|,\, \dots,\, |T_l|$, respectively.
To count the number of all valid labelled trees, let us define
\begin{equation*}
    \mathbf{K} = \big[ \overbrace{k_1,\, \dots,\, k_1}^{e_1},\, \overbrace{k_2,\, \dots,\, k_2}^{e_2},\, \dots,\, \overbrace{k_j,\,\dots,\,k_j}^{e_j} \big],
\end{equation*}
which is a rearrangement of $[|T_1|, \,\dots,\,|T_l|]$ by grouping identical elements together. Here $1\leq k_i \leq k$ denotes the order of the derivative of $\vector{y}$ in \eqref{eqn:high-order-derivative-y-general-expression}, and $e_1,\,\ldots,\,e_j$ correspond to a rearrangement of all nonzero elements of $\vector{m}$. According to \eqref{m-set}, we have $|T| = k_1 e_1 + \cdots + k_j e_j + 1$, $l = e_1 + \cdots e_j$, and
\begin{equation*}
    \mathbf{K}! = (k_1!)^{e_1} (k_2!)^{e_2} \cdots (k_j!)^{e_j}.
\end{equation*}
Then, the number of ways of partitioning $|T|-1$ different labels into $l$ distinguishable sets, corresponding to $l$ subtrees connected with the root, whose sizes are $\mathbf{K}$, is 
\begin{equation*}
    \tilde{\eta}(\mathbf{K}) = \frac{k!}{\mathbf{K}!}.
\end{equation*}
Proceed recursively, the sets of labels in $\vector{y}^{(k_1)},\, \vector{y}^{(k_2)},\, \dots,\, \vector{y}^{(k_j)}$ undergo the above process respectively. In this sense, trees can be viewed as a hierarchical separation of sets (\Cref{fig:valid-labelled-tree}). During this process, each node of the tree structure receives a label, which is the smallest number in the current label set. The final result is a tree whose leaves are labelled. The number of ways of partitioning is 
\begin{equation*}
    \tilde{\eta}(T)=\frac{|T|!}{T!}.
\end{equation*}

In this procedure, all sets (subtrees) are temporarily treated as distinguishable at each stage, even when they are isomorphic. This artificial distinguishability leads to overcounting, as each tree $T$ is enumerated $\sigma(T)$ times (where $\sigma(T)$ is the order of its symmetry group \cite[page 154]{butcher2016numerical}). After compensating for this overcounting, we obtain the final count of valid labelled trees $\alpha(T)$ for a given tree $T$ as
\begin{equation*}
    \alpha(T) = \frac{|T|!}{\sigma(T) T!}.
\end{equation*}

\subsection{Taylor expansion}

To derive the Taylor expansion of $\vector{y}(t)$ satisfying the ODEs system \eqref{eqn:ode-system}, it is equivalent to computing the derivatives $\vector{y}^{(k)}$ for $k = 1,\,2,\,3,\, \dots\,$. Using \Cref{theorem-exact-high-order-derivative}, we obtain the Taylor expansion of $\vector{y}(t)$, stated formally in the following theorem.

\begin{theorem}
\label{theorem:taylorexpansionfory}
    The Taylor expansion of $\vector{y}(t)$ satisfying the ODEs system \eqref{eqn:ode-system} at $t = t_0$ is given by:
\begin{equation}\label{eqn:ode-taylor}
    \vector{y}(t) = \vector{y}(t_0) + \sum_{k=1}^{\infty} \frac{1}{k!} (t-t_0)^k \sum_{|T|=k} \alpha(T) \left(T(\vector{f})\right)(\vector{y}(t_0)).
\end{equation}
The equality holds for $t$ within a neighborhood of $t_0$ where the Taylor series converges.
\end{theorem}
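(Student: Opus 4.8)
The plan is to obtain \eqref{eqn:ode-taylor} by combining the classical Taylor series of $\vector{y}$ about $t_0$ with the closed form of its derivatives supplied by \Cref{theorem-exact-high-order-derivative}. Since $\vector{f}$ is analytic in a neighbourhood of $\vector{y}(t_0)$ (the standing hypothesis under which $T(\vector{f})$ is defined), the solution $\vector{y}$ of \eqref{eqn:ode-system} is analytic at $t_0$, so on some interval around $t_0$ one has the convergent expansion
\begin{equation*}
    \vector{y}(t) = \vector{y}(t_0) + \sum_{k=1}^{\infty} \frac{1}{k!} (t-t_0)^k\, \vector{y}^{(k)}(t_0),
\end{equation*}
and it then remains only to rewrite each coefficient $\vector{y}^{(k)}(t_0)$.

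First I would apply \Cref{theorem-exact-high-order-derivative} with the order index shifted by one: replacing $k+1$ by $k$ in \eqref{eqn:ode-expansion-a} gives, for every $k \ge 1$,
\begin{equation*}
    \vector{y}^{(k)} = \sum_{|T|=k} \alpha(T)\, T(\vector{f}),
\end{equation*}
which for $k=1$ reduces to $\vector{y}' = \alpha([1])\,[1](\vector{f}) = \vector{f}(\vector{y})$ because the only tree of order $1$ is $[1]$ with $\alpha([1])=1$, consistent with \eqref{eqn:ode-system}. Evaluating both sides at $t=t_0$ substitutes $\vector{y}(t_0)$ into every $T(\vector{f})$, so that $\vector{y}^{(k)}(t_0) = \sum_{|T|=k}\alpha(T)\,\bigl(T(\vector{f})\bigr)(\vector{y}(t_0))$. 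Inserting this into the Taylor series and interchanging the finite inner sum with the outer series yields exactly \eqref{eqn:ode-taylor}.

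I expect the only genuine point requiring care to be the analyticity/convergence step invoked at the start: one must cite the standard theorem that an autonomous ODE with analytic right-hand side admits an analytic solution, which guarantees both that the Taylor series represents $\vector{y}$ and that it converges on a neighbourhood of $t_0$. This is mild, and it is also the precise regularity assumption already imposed when $T(\vector{f})$ was introduced, so no new hypothesis is needed. Everything else is bookkeeping, since \Cref{theorem-exact-high-order-derivative} has already carried out the combinatorial work of identifying the derivatives $\vector{y}^{(k)}$ with the weighted sums over trees.
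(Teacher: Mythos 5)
Your proposal is correct and matches the paper's own (very brief) argument: the paper likewise obtains \eqref{eqn:ode-taylor} by substituting the tree expansion of $\vector{y}^{(k)}$ from \Cref{theorem-exact-high-order-derivative} (with the index shifted from $k+1$ to $k$) into the classical Taylor series at $t_0$. Your additional remark on analyticity of the solution for analytic $\vector{f}$ is a reasonable justification of the convergence caveat that the theorem statement itself already builds in.
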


\section{The derivative of tree tensor networks with constraints driven by Runge--Kutta method} \label{subsec:constraint-driven-by-runge-kutta}

RK method is a well-known single step method for numerically solving ODEs system \eqref{eqn:ode-system} \cite{butcher2015runge,hairer1987solving,hairer2006numerical,humphries1994runge,verwer1996explicit}. In each step of an RK method, the stage values and the next step solution can all be viewed as functions of step size $h$. 

\subsection{The derivative of vector-valued functions with algebraic constraints}\label{sec:The derivative of vector-valued functions with algebraic constraints}
With some abstraction, we consider the following constraint which captures the core feature of the RK method:
\begin{equation}\label{eqn:RK-constraint}
    \vector{y} = \vector{y}_0 + h \vector{f}(\vector{y}), 
\end{equation}
where $\vector{y} = \vector{y}(h) \in \mathbb{R}^d$ is a function of $h$, and $\vector{y}_0:=\vector{y}(0)$ or $\vector{y}_0:=\vector{y}(t_0)$ is independent of $h$. The $k^{\mathrm{th}}$ order derivative of $\vector{y}$ at $h=0$ is given by the following theorem.  

\begin{theorem}
\label{theorem-algebraic-high-order-derivative}
Assume that $\vector{y}$ is a solution of the algebraic system \eqref{eqn:RK-constraint}. Then, the $k^\mathrm{th}$ order derivative of $\vector{y}$ at $h=0$ is given by 
    \begin{equation}\label{eqn:RK-expansion}
    \vector{y}^{(k)} |_{h=0} = \sum_{|T| = k} \gamma(T) \alpha(T) T(\vector{f}) \big|_{h=0}.
\end{equation}
\end{theorem}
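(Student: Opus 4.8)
The plan is to mimic the strategy used for \Cref{theorem-exact-high-order-derivative} but now tracking the extra combinatorial weight introduced by the algebraic fixed-point structure \eqref{eqn:RK-constraint}. First I would differentiate \eqref{eqn:RK-constraint} repeatedly with respect to $h$. Writing $\vector{y} = \vector{y}_0 + h\vector{f}(\vector{y})$, the Leibniz rule gives, for $k\ge 1$,
\begin{equation*}
    \vector{y}^{(k)} = k\,\frac{\dd^{k-1}}{\dd h^{k-1}}\bigl(\vector{f}(\vector{y})\bigr) + h\,\frac{\dd^{k}}{\dd h^{k}}\bigl(\vector{f}(\vector{y})\bigr),
\end{equation*}
so that at $h=0$ the second term drops and $\vector{y}^{(k)}|_{h=0} = k\,(\vector{f}(\vector{y}))^{(k-1)}|_{h=0}$. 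Now I would expand $(\vector{f}(\vector{y}))^{(k-1)}$ exactly as in \Cref{subsec:Differentiation following layer-wise growth of trees} via \eqref{eqn:high-order-derivative-y-multiplicity}, obtaining a sum over $\vector{f}^{(l)}(\vector{y})$ contracted with products of lower-order derivatives $\vector{y}^{(j_1)},\dots,\vector{y}^{(j_l)}$ with the Fa\`a di Bruno coefficient $\eta(\vector{m})$; these derivatives are again evaluated at $h=0$.

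The core of the proof is then an induction on $k$. Assume \eqref{eqn:RK-expansion} holds for all orders less than $k$. Substituting the inductive hypothesis $\vector{y}^{(i)}|_{h=0} = \sum_{|\tilde T|=i}\gamma(\tilde T)\alpha(\tilde T)\tilde T(\vector{f})|_{h=0}$ into the layer-wise expansion, and using the relation $\vector{y}^{(k)}|_{h=0}=k\,(\vector{f}(\vector{y}))^{(k-1)}|_{h=0}$, I get for a tree $T=[T_1^{m_1}\cdots T_k^{m_k}]$ (treating first the case of distinct subtree orders) a predicted coefficient
\begin{equation*}
    \beta(T) = k\,\eta(\vector{m})\prod_{i}\bigl(\gamma(T_i)\alpha(T_i)\bigr)^{m_i},
\end{equation*}
and one checks this equals $\gamma(T)\alpha(T)$. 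Here the key arithmetic is that $k=|T|-1$, that $\eta(\vector{m})\prod_i(\alpha(T_i))^{m_i}$ already equals $\alpha(T)$ by the computation \eqref{eq:theorem-3}, and that the recursion $\gamma(T)=|T|\prod_i(\gamma(T_i))^{m_i}$ supplies exactly the missing factor: $k\cdot\prod_i\gamma(T_i)^{m_i}=(|T|-1)\prod_i\gamma(T_i)^{m_i}$ — wait, this must be reconciled carefully, since $\gamma(T)=|T|\prod_i(\gamma(T_i))^{m_i}$, not $(|T|-1)\prod_i(\gamma(T_i))^{m_i}$. The resolution is that the extra factor $k$ should be paired not with the definition of $\gamma$ directly but absorbed through the consistency of the index sum $\sum_i i m_i = k$ together with the $\eta$-normalization; a cleaner route is to prove by the same induction the scalar identity for $\vector{f}$ a monomial, comparing Taylor coefficients of the unique analytic solution $y(h)$ of $y=y_0+h f(y)$, exactly as \eqref{eqn:special-y} was used. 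I would therefore replace the direct coefficient-matching by the substitution argument: plug the known B-series form of $y(h)$ into both sides, use the arbitrariness of the derivatives $y^{(i)}(0)$ (equivalently, of $\vector{f}$ and its derivatives), and read off that the coefficient of $T(\vector{f})$ must be $\gamma(T)\alpha(T)$.

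For the case where several distinct subtrees share the same order, I would repeat the refinement from \eqref{eq:theorem-2}–\eqref{eq:theorem-4}: introduce the finer multi-indices $\vector{m}_i=(m_{i,1},\dots,m_{i,n_i})$, use the multinomial expansion of $(\vector{y}^{(i)})^{m_i}$ against a symmetric tensor, and verify that the binomial factors $\binom{m_i}{\vector{m}_i}$ combine with $\hat\eta(\vector{m})$ and the $\sigma(T_{i,j})$ precisely to reproduce $\sigma(T)$, while the $\gamma$-factors multiply correctly because $\gamma$ is multiplicative over the subtree decomposition. The main obstacle, as flagged above, is bookkeeping the single extra factor of $k=|T|-1$ coming from differentiating the product $h\vector{f}(\vector{y})$ and confirming it matches the ``$|T|$ versus $|T|-1$'' in the recursion for $\gamma$; the scalar-monomial comparison circumvents any sign- or off-by-one trap, so I would lean on that as the rigorous backbone and present the tree-coefficient computation as the interpretation. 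Finally I would note \eqref{eqn:RK-expansion} is exactly Butcher's classical result (cf. the growth function $\gamma(T)$), and that the TTN framework yields it without separately invoking Butcher's recursion, using only \Cref{theorem-exact-high-order-derivative} and the layer-wise expansion already established.
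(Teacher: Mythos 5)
Your overall strategy is the paper's: differentiate the constraint with the Leibniz rule to get $\vector{y}^{(k)}|_{h=0}=k\,(\vector{f}(\vector{y}))^{(k-1)}|_{h=0}$, expand $(\vector{f}(\vector{y}))^{(k-1)}$ via \eqref{eqn:high-order-derivative-y-multiplicity}, and recurse. The gap is in how you close the coefficient count. The ``$|T|$ versus $|T|-1$'' tension you flag is not real: the trees produced by substituting the inductive hypothesis into $(\vector{f}(\vector{y}))^{(k-1)}$ satisfy $\sum_i i m_i = k-1$, hence $|T| = 1+\sum_i i m_i = k$, so the Leibniz factor is $k=|T|$, not $|T|-1$ (you appear to have carried over the indexing of the ODE case, where $\vector{y}^{(k+1)}=(\vector{f}(\vector{y}))^{(k)}$ yields trees of order $k+1$). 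Your predicted coefficient $k\,\eta(\vector{m})\prod_i\bigl(\gamma(T_i)\alpha(T_i)\bigr)^{m_i}$ then equals $\bigl(|T|\prod_i\gamma(T_i)^{m_i}\bigr)\cdot\bigl(\eta(\vector{m})\prod_i\alpha(T_i)^{m_i}\bigr)=\gamma(T)\,\alpha(T)$, using \eqref{eq:theorem-3} and the recursion $\gamma(T)=|T|\prod_i\gamma(T_i)^{m_i}$; the direct matching you abandoned closes the induction with no residue, and the equal-order-subtree case goes through verbatim as in \eqref{eq:theorem-4} because the $\gamma$-factors are multiplicative and untouched by the symmetry bookkeeping.

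The fallback you propose to ``lean on as the rigorous backbone'' does not work. For scalar $f$, distinct trees can produce identical elementary differentials --- e.g.\ $[[\bullet\,\bullet]]$ and $[[\bullet]\,\bullet]$ both reduce to $f''f'f^2$, yet have $\gamma=12$ and $\gamma=8$ respectively --- so comparing Taylor coefficients of the scalar solution of $y=y_0+hf(y)$ determines only sums of weights over such classes, never the individual coefficient of each $T(\vector{f})$. Moreover, unlike in \eqref{eqn:special-y}, the values $y^{(i)}(0)$ are not free parameters here: they are fixed by the constraint, so the ``arbitrariness'' you invoke has nothing left to vary. Drop the fallback, correct the off-by-one, and your first computation is essentially the paper's proof (the paper states the recursion tersely and identifies $\gamma(T)$ as the accumulated Leibniz multiplicity; your induction makes that bookkeeping explicit).
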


\begin{proof}
    Using the Leibniz rule, the high order derivative of $\vector{y}$ with respect to $h$ is
\begin{equation*}
    \vector{y}^{(k)} = (h \vector{f}(\vector{y}))^{(k)} = k (\vector{f}(\vector{y}))^{(k-1)} + h (\vector{f}(\vector{y}))^{(k)}.
\end{equation*}
Using \eqref{eqn:high-order-derivative-y-multiplicity}, the $(k-1)^\mathrm{th}$ order derivative of $\vector{f}(\vector{y})$ is 
\begin{equation*}
    (\vector{f}(\vector{y}))^{(k-1)} = \sum_{\vector{m} \in \cup_{l=1}^{k-1} S_{k-1,l}} \eta(\vector{m}) \vector{f}^{(l)}(\vector{y}) (\vector{y}')^{m_1} (\vector{y}'')^{m_2} \cdots (\vector{y}^{(k-1)})^{m_{k-1}}.
\end{equation*}
Thus, 
\begin{equation*}
    \vector{y}^{(k)} = k \left[ \sum_{\vector{m} \in \cup_{l=1}^{k-1} S_{k-1,l}} \eta(\vector{m}) \vector{f}^{(l)}(\vector{y}) (\vector{y}')^{m_1} (\vector{y}'')^{m_2} \cdots (\vector{y}^{(k-1)})^{m_{k-1}} \right] + h (\vector{f}(\vector{y}))^{(k)}.
\end{equation*}
Taking the limit $h\to 0$, we have
\begin{equation*}
    \vector{y}^{(k)} \big|_{h=0} = k  \sum_{\vector{m} \in \cup_{l=1}^{k-1} S_{k-1,l}} \eta(\vector{m}) \left[\vector{f}^{(l)}(\vector{y}) (\vector{y}')^{m_1} (\vector{y}'')^{m_2} \cdots (\vector{y}^{(k-1)})^{m_{k-1}} \right]_{h=0}.
\end{equation*}

Recursively applying the above procedure, one eventually gets
\begin{equation}\label{eqn:RK-expansion-a}
    \vector{y}^{(k)} |_{h=0} = \sum_{|T| = k} \gamma(T) \alpha(T) T(\vector{f}) \big|_{h=0},
\end{equation}
where $\gamma(T)$ is the multiplicity of $\alpha(T)T(\vector{f})$ introduced by the use of Leibniz rule.

\end{proof}

The value of $\gamma(T)$ is computed by contracting a particular TTN, where each vertex is assigned a value $k$ if the subtree rooted at that vertex has $k$ vertices, as shown in \Cref{fig:representation-numerical-tree-c}.

\begin{remark*}
{\it
    The constraint \eqref{eqn:RK-constraint} can be transformed into ODE constraints with a similar form as \eqref{eqn:ode-system}. By differentiating both sides of \eqref{eqn:RK-constraint} with respect to $h$, we obtain
    \begin{equation*}
        \vector{y}' = \vector{f}(\vector{y}) + h \vector{f}'(\vector{y}) \vector{y}'.
    \end{equation*}
    For small enough $h$, this gives 
    \begin{equation*}
        \vector{y}' = \left(\mathbf{I}  - h\vector{f}'(\vector{y}) \right)^{-1} \vector{f}(\vector{y}).
    \end{equation*}
    With the definition $\vector{Y} = \begin{bmatrix} \vector{y} \\ h \end{bmatrix}$, the above equation is equivalent to 
    \begin{equation*}
        \vector{Y}' = \vector{F}(\vector{Y}),
    \end{equation*}
    where 
    \begin{equation*}
        \vector{F}(\vector{Y}) := \begin{bmatrix}
            \left( \mathbf{I}  - h\vector{f}'(\vector{y}) \right)^{-1} \vector{f}(\vector{y}) \\ 1
        \end{bmatrix}.
    \end{equation*}
    By employing this transformation, we reformulate \eqref{eqn:RK-constraint} as ODE constraints. Following the procedures outlined in \Cref{sec:3}, we can derive derivatives of  $\vector{Y}$ to any desired order. However, since the computation of high order derivatives of $\vector{F}$ with respect to $\vector{Y}$ is rather complicated, we do not elaborate on this procedure in this paper.
    }
\end{remark*}

\begin{remark*}
{\it
    The findings derived in this subsection can be extended to other general algebraic constraints, such as,
    \begin{equation}\label{eqn:extended-constraint}
        \vector{y}' = \vector{y}_0 + p(h) \vector{f}(\vector{y}),
    \end{equation}
    where $\vector{y} = \vector{y}(h)$ is still a vector-valued function of $h$, and $p(h)$ is some known polynomial of $h$. Due to the separation of $h$ and $\vector{y}$ on the right hand side of \eqref{eqn:extended-constraint}, the process of computing high order derivatives of $\vector{y}$ that satisfying constraints \eqref{eqn:extended-constraint} is quite similar to that of $\vector{y}$ satisfying constraints \eqref{eqn:RK-constraint}. 
    }
\end{remark*}



\subsection{Taylor expansion with tensor algebraic constraints}

We consider algebraic constraints driven by the RK methods and compute the Taylor expansion of a vector-valued function subject to these constraints. This forms a fundamental component in establishing the order conditions of RK methods.

Let $\vector{Y}_i$, $i=1,\,\dots,\,s$, be vector-valued functions of $h$. Consider 
\begin{equation}\label{eqn:RK-stages}
    \vector{Y}_i = \vector{y}_0 + h \sum_{j=1}^{s} a_{ij} \vector{f}(\vector{Y}_j), \quad i = 1, \dots, s,
\end{equation}
where $\vector{y}_0:=\vector{y}(0)$ or $\vector{y}_0:=\vector{y}(t_0) \in \mathbb{R}^d$, $\vector{Y}_i: \mathbb{R} \to \mathbb{R}^d$, and $\vector{f}: \mathbb{R}^d \to \mathbb{R}^d$.

Rewrite the system \eqref{eqn:RK-stages} in the following matrix form
\begin{equation*}
    \begin{bmatrix} \vector{Y}_1 \\ \vdots \\ \vector{Y}_s \end{bmatrix}
    = \begin{bmatrix} \vector{y}_0 \\ \vdots \\ \vector{y}_0 \end{bmatrix} 
    + h \begin{bmatrix}
        a_{11} \matrix{I} & a_{12} \matrix{I} & \cdots & a_{1s} \matrix{I} \\
        \vdots   & \vdots   &        & \vdots   \\
        a_{s1} \matrix{I} & a_{s2} \matrix{I} & \cdots & a_{ss} \matrix{I} 
    \end{bmatrix} 
    \begin{bmatrix} \vector{f}(\vector{Y}_1) \\ \vdots \\ \vector{f}(\vector{Y}_s) \end{bmatrix},
\end{equation*}
or in a more compact notation
\begin{equation}\label{eqn:RK-stages-matrix-compact}
    \vector{Y} = \vector{Y}_0 + h \Tilde{\matrix{A}} \vector{F}(\vector{Y}),
\end{equation}
where $\vector{Y}: \mathbb{R} \to \mathbb{R}^{sd}$, $\vector{Y}_0 \in \mathbb{R}^{sd}$, $\Tilde{\matrix{A}} = \matrix{A} \otimes \matrix{I} \in \mathbb{R}^{(sd) \times (sd)}$, and $\vector{F}: \mathbb{R}^{sd} \to \mathbb{R}^{sd}$. 

By denoting $\Tilde{\vector{F}} (\vector{Y}) = \Tilde{\matrix{A}} \vector{F}(\vector{Y})$, we represent the constraint \eqref{eqn:RK-stages-matrix-compact} as:
\begin{equation*}
    \vector{Y} = \vector{Y}_0 + h \Tilde{\vector{F}}(\vector{Y}),
\end{equation*}
which is the constraint we have discussed in \Cref{sec:The derivative of vector-valued functions with algebraic constraints}. Using \eqref{eqn:RK-expansion}, we obtain
\begin{equation}\label{eqn:algebraic-constraints-expansion}
    \vector{Y}^{(k)} \big|_{h=0} = \sum_{|T|=k} \gamma(T) \alpha(T) T(\Tilde{\vector{F}}) |_{h=0},
\end{equation}
where $T(\tilde{\vector{F}}) \in \mathbb{R}^{sd}$.

To simplify notation, let us introduce a TTN denoted as $\Phi(T)$. The TTN $\Phi(T)$ shares the same structure of TTN diagram as that as $T$. For each vertex in the TTN diagram of $\Phi(T)$ with $l+1$ connected edges, the corresponding component tensor is $\matrix{A} \matrix{I}_s^l$, where $\matrix{I}_s^l = \operatorname{diag}_l (1,\,\ldots,\,1) \in \mathbb{R}^{s\times \cdots \times s}$ is a diagonal tensor with $l$ nonzero elements, each equal to $1$ (as defined in \cite{kolda2009tensor}). It is evident that $\Phi(T)$ represents a contraction of method-dependent tensors determined by the RK tableau. In \Cref{fig:representation-numerical-tree-b}, a specific TTN $T$ is used to illustrate the definition of $\Phi(T)$. Using this notation, we derive the Taylor expansion for $\vector{Y}$ satisfying \eqref{eqn:RK-stages-matrix-compact}, as stated in the following theorem. 

\begin{theorem}
The Taylor expansion of $\vector{y}(h)$ satisfying the constraints \eqref{eqn:RK-stages} at $h = 0$ is given by:
\begin{equation}
    \vector{Y}(h) = \vector{Y}_0 + \sum_{k=1}^{\infty} \frac{h^k}{k!} \sum_{|T| = k} \alpha(T) \gamma(T) \Phi(T) \otimes T(\vector{f})(\vector{y}_0).
\end{equation}
The equality holds for $h$ within a neighborhood of $0$ where the Taylor series converges.
\end{theorem}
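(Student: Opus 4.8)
The plan is to bootstrap off \Cref{theorem-algebraic-high-order-derivative}, so that the only genuinely new content is a Kronecker‑product factorization of the elementary differentials for the "method‑inflated" vector field. Applying \Cref{theorem-algebraic-high-order-derivative} to the compact constraint \eqref{eqn:RK-stages-matrix-compact} with $\tilde{\vector{F}}(\vector{Y}) = \tilde{\matrix{A}}\vector{F}(\vector{Y})$ gives, exactly as in \eqref{eqn:algebraic-constraints-expansion},
\begin{equation*}
    \vector{Y}^{(k)}\big|_{h=0} = \sum_{|T|=k}\gamma(T)\alpha(T)\, T(\tilde{\vector{F}})\big|_{h=0}.
\end{equation*}
Hence, by Taylor's theorem, the claim reduces to proving that for every unlabelled tree $T$ one has the factorization
\begin{equation*}
    T(\tilde{\vector{F}})\big|_{h=0} = \Phi(T) \otimes T(\vector{f})(\vector{y}_0);
\end{equation*}
summing over $|T|=k$, dividing by $k!$ and multiplying by $h^k$ then assembles the stated series, whose convergence on a neighbourhood of $h=0$ follows from the analyticity of $\vector{f}$ just as in \Cref{theorem:taylorexpansionfory}.

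To establish the factorization I would first analyse the component tensors. Since $\tilde{\matrix{A}} = \matrix{A}\otimes\matrix{I}_d$ is a constant linear map acting on the output mode, differentiation gives $\tilde{\vector{F}}^{(l)}(\vector{Y}) = \tilde{\matrix{A}}\,\vector{F}^{(l)}(\vector{Y})$. Because the $i^{\mathrm{th}}$ block of $\vector{F}(\vector{Y}) = (\vector{f}(\vector{Y}_1),\dots,\vector{f}(\vector{Y}_s))$ depends only on the $i^{\mathrm{th}}$ block $\vector{Y}_i$ of its argument, the derivative tensor $\vector{F}^{(l)}$ is "block‑diagonal"; writing each $(sd)$‑index as a multi‑index $\overline{i a}$ with $i\in\{1,\dots,s\}$, $a\in\{1,\dots,d\}$, and evaluating at the uniform point $\vector{Y}|_{h=0} = (\vector{y}_0,\dots,\vector{y}_0)$, this reads
\begin{equation*}
    \vector{F}^{(l)}(\vector{Y})\big|_{h=0} = \matrix{I}_s^{l} \otimes \vector{f}^{(l)}(\vector{y}_0),
\end{equation*}
with $\matrix{I}_s^l$ the diagonal tensor used in the definition of $\Phi$. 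Combining this with the mixed‑product property of \Cref{lemma:mixed-product-property} yields
\begin{equation*}
    \tilde{\vector{F}}^{(l)}(\vector{Y})\big|_{h=0} = (\matrix{A}\otimes\matrix{I}_d)\big(\matrix{I}_s^{l}\otimes\vector{f}^{(l)}(\vector{y}_0)\big) = (\matrix{A}\matrix{I}_s^{l})\otimes\vector{f}^{(l)}(\vector{y}_0),
\end{equation*}
so that any vertex component of $T(\tilde{\vector{F}})$, at $h=0$, already splits into the $\Phi$‑component $\matrix{A}\matrix{I}_s^l$ on the $s$‑indices and the elementary‑differential component $\vector{f}^{(l)}(\vector{y}_0)$ on the $d$‑indices.

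I would then prove the factorization by induction on $|T|$ along the recursive definition $T(\tilde{\vector{F}}) = \tilde{\vector{F}}^{(l)}(\vector{Y})\,T_1(\tilde{\vector{F}})\cdots T_l(\tilde{\vector{F}})$ for $T = [T_1\cdots T_l]$. Assuming inductively that $T_j(\tilde{\vector{F}})|_{h=0} = \Phi(T_j)\otimes T_j(\vector{f})(\vector{y}_0)$, I substitute the component formula above and apply \Cref{lemma:mixed-product-property} once for each of the $l$ child contractions, peeling off at each step a Kronecker pair (an $s$‑indexed leg against $\matrix{A}\matrix{I}_s^l$ and a $d$‑indexed leg against $\vector{f}^{(l)}(\vector{y}_0)$, with $\matrix{I}_d$ acting trivially on the latter). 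This yields
\begin{equation*}
    T(\tilde{\vector{F}})\big|_{h=0} = \big((\matrix{A}\matrix{I}_s^l)\Phi(T_1)\cdots\Phi(T_l)\big) \otimes \big(\vector{f}^{(l)}(\vector{y}_0)\,T_1(\vector{f})\cdots T_l(\vector{f})(\vector{y}_0)\big),
\end{equation*}
and the two factors are exactly $\Phi(T)$ and $T(\vector{f})(\vector{y}_0)$ by their recursive definitions, completing the induction.

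The main obstacle is precisely the bookkeeping in this last step: one must check that the full contraction pattern defining $T(\tilde{\vector{F}})$ restricts, edge by edge of the tree, to the contraction pattern defining $\Phi(T)$ on the $s$‑indices and the one defining $T(\vector{f})$ on the $d$‑indices, so that the iterated use of the mixed‑product property is legitimate and the Kronecker structure is genuinely preserved through the whole tree rather than just at the root. The partial symmetry of $\vector{f}^{(l)}$ recorded in \eqref{symmetryproperty} is what makes the order in which the children are peeled off immaterial, and it should be invoked explicitly to justify rearranging the contractions.
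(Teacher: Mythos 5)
Your proposal is correct and follows essentially the same route as the paper: both reduce the claim via \eqref{eqn:algebraic-constraints-expansion} to the factorization $T(\tilde{\vector{F}})|_{h=0}=\Phi(T)\otimes T(\vector{f})(\vector{y}_0)$, establish it at the level of the vertex components $\tilde{\matrix{A}}\vector{F}^{(l)}$ using the block structure of $\vector{F}^{(l)}$ at the uniform point and the mixed-product property of \Cref{lemma:mixed-product-property}, and then propagate it through the whole tree. The only difference is that you spell out the tree-recursive induction that the paper compresses into ``the mixed product property and associativity.''
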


\begin{proof}
    Let us use tensor product notations to compute $T(\tilde{\vector{F}})$ as introduced in \eqref{eqn:algebraic-constraints-expansion} and reveal its relationship with $T(\vector{f})$. According to $\Tilde{\vector{F}}(\vector{Y}) = \Tilde{\matrix{A}} \vector{F}(\vector{Y})$, for $l=0,\,1,\,2,\,\ldots,$ we obtain
\begin{equation*}
    \Tilde{\vector{F}}^{(l)} (\vector{Y}) = \Tilde{\matrix{A}} \vector{F}^{(l)}(\vector{Y}),
\end{equation*}
which is the basic component of the TTN $T (\Tilde{\vector{F}})$. For each $l$, the TTN diagram of $\tilde{\vector{F}}^{(l)}(Y)$ is displayed in \Cref{fig:basic-component}. Substituting the basic components $ \Tilde{\matrix{A}} \vector{F}^{(l)}$ in the TTN $T(\Tilde{\vector{F}})$ (e.g. \Cref{fig:representation-numerical-tree-a}), we get a new TTN with components being $\tilde{\tensor{A}}$ and $\vector{F}^{(l)}$ (see \Cref{fig:representation-numerical-tree-b} for an example).

\begin{figure}[htb]
    \centering
    \subcaptionbox{$l=0$}[0.1\linewidth]{
    \begin{forest}
        midtree
        [,fill=none [,label=right:$\Tilde{\matrix{A}}$ [,label=right:$\vector{F}$ [,no edge,fill=none]]] ]
    \end{forest}
    }\quad
    \subcaptionbox{$l=1$}[0.1\linewidth]{
    \begin{forest}
        midtree
        [,fill=none [,label=right:$\Tilde{\matrix{A}}$ [,label=right:$\vector{F}'$ [,rectangle,fill=none] ]] ]
    \end{forest}
    }\quad
    \subcaptionbox{$l=2$}{
    \begin{forest}
        midtree
        [,fill=none [,label=right:$\Tilde{\matrix{A}}$ [,label=right:$\vector{F}''$ [,rectangle,fill=none][,rectangle,fill=none] ]] ]
    \end{forest}
    }\quad
    \subcaptionbox{$l=3$}{
    \begin{forest}
        midtree
        [,fill=none [,label=right:$\Tilde{\matrix{A}}$ [,label=right:$\vector{F}^{(3)}$ [,rectangle,fill=none][,rectangle,fill=none][,rectangle,fill=none] ]] ]
    \end{forest}
    }\quad
    $\cdots$
    \subcaptionbox{$l$}{
    \begin{forest}
        midtree
        [,fill=none [,label=right:$\Tilde{\matrix{A}}$ [,label=right:$\vector{F}^{(l)}$ [,rectangle,fill=none][,rectangle,fill=none][{$\ldots$},no edge,rectangle,fill=none][,rectangle,fill=none][,rectangle,fill=none] ]] ]
    \end{forest}
    }
    \caption{Basic components of TTN $T(\Tilde{\mathbf{F}})$.}
    \label{fig:basic-component}
\end{figure}

\begin{figure}[htb]
    \centering
    \subcaptionbox{Abstract form of $T (\Tilde{\vector{F}})\in\mathbb{R}^{sd}$ \label{fig:representation-numerical-tree-a}}[0.45\textwidth]{
        \begin{forest}
        midtree
        [,fill=none
            [,label=right:$\Tilde{\vector{F}}''$ [,label=right:$\Tilde{\vector{F}}'$ [,label=right:$\Tilde{\vector{F}}$ ]]  [,label=left:$\Tilde{\vector{F}}$ ] ]
        ]
        \end{forest}
    }
    \hfill
    \subcaptionbox{Explicit representation of $T (\Tilde{\vector{F}})\in\mathbb{R}^{sd}$ \label{fig:representation-numerical-tree-b}}[0.5\textwidth]{
        \begin{forest}
            midtree
            [,fill=none 
                [,label=right:$\Tilde{\matrix{A}}$ [,label=right:$\vector{F}''$ 
                    [,label=right:$\Tilde{\matrix{A}}$ [,label=right:$\vector{F}'$ 
                            [,label=right:$\Tilde{\matrix{A}}$ [,label=right:$\vector{F}$]]
                    ]]  
                    [,label=left:$\Tilde{\matrix{A}}$ [,label=left:$\vector{F}$]]
                ]]
            ]
        \end{forest}
    }
    \par
    \subcaptionbox{$\gamma(T)\in\mathbb{R}$ \label{fig:representation-numerical-tree-c}}[0.25\textwidth]{
        \begin{forest}
        midtree
        [,label=right:$4$ [,label=right:$2$ [,label=right:$1$ ]]  [,label=left:$1$ ] ]
        \end{forest}
    }
    \subcaptionbox{$\Phi(T)\in\mathbb{R}^s$ \label{fig:representation-numerical-tree-d}}[0.3\textwidth]{
        \begin{forest}
            midtree
            [, fill=none
            [,label=right:$\matrix{A}$ [,label=right:$\matrix{I}_s^3$ [,label=right:$\matrix{A}$ [,label=right:$\matrix{I}_s^2$ [,label=right:$\matrix{A}$ [,label=right:$\vector{1}_s$] ]]]  [,label=left:$\matrix{A}$ [,label=left:$\vector{1}_s$] ]] ]
            ]
        \end{forest}
    }
    \subcaptionbox{$T(\vector{f})\in\mathbb{R}^d$ \label{fig:representation-numerical-tree-e}}[0.3\textwidth]{
        \begin{forest}
        midtree
        [, fill=none 
            [,label=right:$\vector{f}''$ [,label=right:$\vector{f}'$ [,label=right:$\vector{f}$ ]]  [,label=left:$\vector{f}$ ] ]
        ]
        \end{forest}
    }
    \caption{Representation of $T(\Tilde{\vector{F}})$.}
    \label{fig:representation-numerical-tree}
\end{figure}


Since $\vector{F}(\vector{Y}) = \begin{bmatrix} \vector{f}(\vector{Y}_1) \\ \vdots \\ \vector{f}(\vector{Y}_s) \end{bmatrix}$, by taking the limit $h \to 0$, we have
\begin{equation*}
    \lim_{h \to 0} \vector{F}(\vector{Y}) = \begin{bmatrix}
        \vector{f}(\vector{y}_0) \\ \vdots \\ \vector{f}(\vector{y}_0)
    \end{bmatrix} = \vector{1}_s \otimes \vector{f}(\vector{y}_0) =: \mathbf{1}_s \otimes \vector{f} |_{h=0},
\end{equation*}
where $\mathbf{1}_s = \begin{bmatrix}
    1 \\ \vdots \\ 1
\end{bmatrix} \in \mathbb{R}^s$. Similarly, 
\begin{equation*}
    \lim_{h \to 0} \vector{F}^{(l)}(\vector{Y}) = \matrix{I}_s^{l+1} \otimes \vector{f}^{(l)}(\vector{y}_0) =: \matrix{I}_s^{l+1} \otimes \vector{f}^{(l)} |_{h=0}, \quad \text{for } l \geq 1.
\end{equation*}
Due to $\Tilde{\matrix{A}} = \matrix{A} \otimes \matrix{I}$, we have
\begin{equation}\label{eqn:RK-AF}
    \lim_{h \to 0} \Tilde{\matrix{A}} \vector{F}(\vector{Y}) = (\matrix{A} \otimes \matrix{I})(\vector{1}_s \otimes \vector{f}|_{h=0}) = (\matrix{A} \vector{1}_s) \otimes (\matrix{I}  \vector{f}|_{h=0}) = (\matrix{A} \vector{1}_s) \otimes \vector{f}|_{h=0},
\end{equation}
and
\begin{equation}\label{eqn:RK-AFl}
    \lim_{h \to 0} \Tilde{\matrix{A}} \vector{F}^{(l)}(\vector{Y}) = (\matrix{A} \otimes \matrix{I})(\matrix{I}_s^{l+1} \otimes \vector{f}^{(l)}|_{h=0}) = (\matrix{A} \matrix{I}_s^{l+1}) \otimes (\matrix{I} \vector{f}^{(l)}|_{h=0}) = (\matrix{A} \matrix{I}_s^{l+1}) \otimes \vector{f}^{(l)}|_{h=0}.
\end{equation}
Here, we applied the mixed product property introduced in \Cref{lemma:mixed-product-property}. Using \eqref{eqn:RK-AF}, \eqref{eqn:RK-AFl}, the mixed product property and associativity, the TTN $T(\Tilde{\vector{F}})$ can be fully decoupled as
\begin{equation}\label{eqn:decomposition-phi-T}
    \lim_{h \to 0} T(\Tilde{\vector{F}}) = \Phi(T) \otimes T(\vector{f})|_{h=0},
\end{equation}
where $\Phi(T) \in \mathbb{R}^{s}$ is a TTN independent of $\vector{f}$. 
Equation \eqref{eqn:decomposition-phi-T} establishes the connection between $T(\tilde{\vector{F}})$ and $T(\vector{f})$, showing that $T(\tilde{\vector{F}})$ is the Kronecker product of a method-dependent TTN $\Phi(T)$ and $T(\vector{f})$. In \Cref{fig:representation-numerical-tree}, a specific TTN $T$ is used to illustrate the connection between $T(\tilde{\vector{F}})$ and $T(\vector{f})$.

Using \eqref{eqn:algebraic-constraints-expansion} and \eqref{eqn:decomposition-phi-T}, the $k^\mathrm{th}$ order derivative of $\vector{Y}$ at $h=0$ can be computed by
\begin{equation*}
    \vector{Y}^{(k)} \big|_{h=0} = \sum_{|T|=k} \alpha(T) \gamma(T) \Phi(T) \otimes T (\vector{f})(\vector{y}_0).
\end{equation*}
This decomposition simplify the calculation of $\vector{Y}^{(k)}$. In summary, the Taylor expansion of $\vector{Y}$ at $h = 0$ can be written as:
\begin{equation*}
    \vector{Y}(h) = \vector{Y}_0 + \sum_{k=1}^{\infty} \frac{h^k}{k!} \sum_{|T| = k} \alpha(T) \gamma(T) \Phi(T) \otimes T(\vector{f})(\vector{y}_0).
\end{equation*}

\end{proof}

\section{Application in constructing order conditions of the Runge--Kutta methods} \label{sec:order-condition-RK}

The framework for calculating the derivatives of TTNs under specific constraints was discussed in former sections. As an application, we use this framework in constructing order conditions of the RK methods.

For the ODEs system \eqref{eqn:ode-system}, a single step of an $s$-stage RK method is
\begin{equation}\left\{
\begin{aligned}
\label{RKscheme}
    \vector{Y}_i &= \vector{y}_0 + h \sum_{j=1}^{s} a_{ij} \vector{f}(\vector{Y}_j), \quad i = 1,\, \dots,\, s, \\
    \vector{y}_1 &= \vector{y}_0 + h \sum_{i=1}^{s} b_i \vector{f}(\vector{Y}_i).
\end{aligned}\right.
\end{equation}
Let us rewrite the above RK scheme into a matrix form
\begin{equation*}
    \begin{bmatrix} \vector{Y}_1 \\ \vdots \\ \vector{Y}_s \\ \vector{y}_1 \end{bmatrix}
    = \begin{bmatrix} \vector{y}_0 \\ \vdots \\ \vector{y}_0 \\ \vector{y}_0 \end{bmatrix} 
    + h \begin{bmatrix}
        a_{11} \matrix{I} & a_{12} \matrix{I} & \cdots & a_{1s} \matrix{I} & \matrix{0} \\
        \vdots   & \vdots   &        & \vdots   & \vdots \\
        a_{s1} \matrix{I} & a_{s2} \matrix{I} & \cdots & a_{ss} \matrix{I} & \matrix{0} \\
        b_1 \matrix{I}    & b_2 \matrix{I}    & \cdots & b_s \matrix{I}    & \matrix{0}
    \end{bmatrix} 
    \begin{bmatrix} \vector{f}(\vector{Y}_1) \\ \vdots \\ \vector{f}(\vector{Y}_s) \\ \vector{f}(\vector{y}_1) \end{bmatrix},
\end{equation*}
or a more compact natation
\begin{equation*}
    \Hat{\vector{Y}} = \Hat{\vector{Y}}_0 + h \Hat{\matrix{A}} \Hat{\vector{F}}(\Hat{\vector{Y}}),
\end{equation*}
where $\Hat{\vector{Y}} \in \mathbb{R}^{(s+1)d}$, $\Hat{\matrix{A}} = \begin{bmatrix}
    \Tilde{\matrix{A}} & \matrix{0} \\
    \vector{b}^\top \otimes \matrix{I}  & \matrix{0}
\end{bmatrix} = \begin{bmatrix}
    \matrix{A} \otimes \matrix{I} & \matrix{0} \\
    \vector{b}^\top \otimes \matrix{I}  & \matrix{0}
\end{bmatrix} \in \mathbb{R}^{(s+1)d \times (s+1)d}$, $\Hat{\vector{F}}(\Hat{\vector{Y}}) = \begin{bmatrix} \vector{F}(\vector{Y}) \\ \vector{f}(\vector{y}_1) \end{bmatrix} \in \mathbb{R}^{(s+1)d}$, and $\vector{F}(\vector{Y}) = \begin{bmatrix}
    \vector{f}(\vector{Y}_1) \\ \vdots \\ \vector{f}(\vector{Y}_s)
\end{bmatrix}$.

\subsection{Order conditions for RK methods}
Assume $\Hat{\Phi}(T)\in\mathbb{R}^{s+1}$ is a TTN that shares the same structure as $\Phi(T)$ and can be computed by replacing $\tilde{\matrix{A}} \matrix{I}_{s}^{l+1}$ with $\Hat{\matrix{A}} \tensor{I}_{s+1}^{l+1}$.
The order conditions for RK methods \eqref{RKscheme} are presented in the following theorem, which corresponds to \cite[Theorem 315A]{butcher2016numerical}. In contrast to the proof given in \cite[Theorem 315A]{butcher2016numerical}, our approach provides a constructive derivation of the order conditions, avoiding the use of mathematical induction.


\begin{theorem}
\label{theorem:ordercondition}
    In the case of the TTNs $T(\vector{f})$ are linearly independent, RK methods \eqref{RKscheme} have order $p$ if and only if 
    \begin{equation*}
    \gamma(T) \phi(T) = 1, \quad \forall |T| \leq p,
    \end{equation*}
    where $\phi(T) := (0,\,\dots,\, 0,\, 1)\Hat{\Phi}(T)$.
\end{theorem}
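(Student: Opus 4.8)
The plan is to compare the Taylor expansions of the exact solution $\vector{y}(t_0+h)$ and the numerical solution $\vector{y}_1$, both expressed as sums over unlabelled trees of the elementary differentials $T(\vector{f})$, and to read off the order conditions from the mismatch in coefficients. First I would invoke \Cref{theorem:taylorexpansionfory} to write
\begin{equation*}
    \vector{y}(t_0+h) = \vector{y}_0 + \sum_{k=1}^{\infty} \frac{h^k}{k!} \sum_{|T|=k} \alpha(T) T(\vector{f})(\vector{y}_0),
\end{equation*}
and then apply the Taylor-expansion result of the previous subsection to the augmented system $\Hat{\vector{Y}} = \Hat{\vector{Y}}_0 + h \Hat{\matrix{A}} \Hat{\vector{F}}(\Hat{\vector{Y}})$, whose last block component is exactly $\vector{y}_1$. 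This yields
\begin{equation*}
    \Hat{\vector{Y}}(h) = \Hat{\vector{Y}}_0 + \sum_{k=1}^{\infty} \frac{h^k}{k!} \sum_{|T|=k} \alpha(T)\gamma(T) \Hat{\Phi}(T) \otimes T(\vector{f})(\vector{y}_0),
\end{equation*}
and projecting onto the last of the $s+1$ blocks (i.e. contracting with $(0,\dots,0,1)$ on the $\Hat{\Phi}(T)$ factor) gives
\begin{equation*}
    \vector{y}_1 = \vector{y}_0 + \sum_{k=1}^{\infty} \frac{h^k}{k!} \sum_{|T|=k} \alpha(T)\gamma(T)\phi(T)\, T(\vector{f})(\vector{y}_0).
\end{equation*}

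Next I would subtract the two expansions term by term in $h$. The local truncation error satisfies
\begin{equation*}
    \vector{y}(t_0+h) - \vector{y}_1 = \sum_{k=1}^{\infty} \frac{h^k}{k!} \sum_{|T|=k} \alpha(T)\bigl(1 - \gamma(T)\phi(T)\bigr) T(\vector{f})(\vector{y}_0).
\end{equation*}
The method has order $p$ precisely when this difference is $O(h^{p+1})$, i.e. when all coefficients of $h^k$ for $k \le p$ vanish. Since $\alpha(T) \ne 0$ for every tree, and since we assume the family $\{T(\vector{f})\}$ is linearly independent, the coefficient of $h^k$ vanishes if and only if $\alpha(T)(1-\gamma(T)\phi(T)) = 0$ for each $|T|=k$, which is equivalent to $\gamma(T)\phi(T) = 1$ for all $|T| \le p$. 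This gives the stated equivalence directly, with no induction on the tree structure.

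The step requiring the most care is the bookkeeping that identifies the projected augmented expansion with the genuine numerical expansion. I must verify that the results of \Cref{sec:The derivative of vector-valued functions with algebraic constraints} and the subsequent Taylor-expansion theorem apply verbatim to $\Hat{\matrix{A}}$ even though $\Hat{\matrix{A}}$ is singular and block-lower-triangular (the zero last column) rather than of the form $\matrix{A}\otimes\matrix{I}$ with $\matrix{A}$ generic; the key observations are that $\Hat{\matrix{A}} = \Hat{\matrix{A}}_{\mathrm{block}} \otimes \matrix{I}$ still factors through a Kronecker product with $\matrix{I}$, so the decoupling $\lim_{h\to 0} T(\Hat{\vector{F}}) = \Hat{\Phi}(T) \otimes T(\vector{f})|_{h=0}$ goes through by the same mixed-product argument, and that $\Hat{\vector{F}}$ evaluated at the limit $h\to0$ still collapses to $\vector{1}_{s+1}\otimes\vector{f}(\vector{y}_0)$ (the extra $\vector{f}(\vector{y}_1)$ entry also tends to $\vector{f}(\vector{y}_0)$). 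A secondary point is that the order-$p$ accuracy of a one-step method is equivalent to the local error being $O(h^{p+1})$, which is standard but should be stated. Once these are in place, the linear-independence hypothesis does the rest of the work, converting the vanishing of a linear combination of elementary differentials into the scalar order conditions.
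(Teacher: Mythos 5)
Your proposal is correct and follows essentially the same route as the paper: both derive $\vector{y}_1^{(k)}|_{h=0}=\sum_{|T|=k}\alpha(T)\gamma(T)\phi(T)\,T(\vector{f})|_{h=0}$ by applying the algebraic-constraint expansion to the augmented system with $\Hat{\matrix{A}}=\bigl[\begin{smallmatrix}\matrix{A}&\matrix{0}\\ \vector{b}^\top&0\end{smallmatrix}\bigr]\otimes\matrix{I}$, decouple via the mixed-product property, and compare term by term with the exact expansion \eqref{eqn:ode-expansion-a}, invoking linear independence of the $T(\vector{f})$ for necessity. The bookkeeping point you flag about $\Hat{\matrix{A}}$ being singular is handled in the paper exactly as you suggest, by noting it still factors as a Kronecker product with $\matrix{I}$.
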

\begin{proof}
    By taking the limit $h \to 0$, we have 
\begin{equation*}
    \lim_{h \to 0} \Hat{\vector{F}}(\Hat{\vector{Y}}) = \begin{bmatrix}
        \vector{f}(\vector{y}_0) \\ \vdots \\ \vector{f}(\vector{y}_0) \\ \vector{f}(\vector{y}_0)
    \end{bmatrix}_{(s+1)d,1}
    = \vector{1}_{s+1} \otimes \vector{f}(\vector{y}_0).
\end{equation*}
Similarly, 
\begin{equation*}
    \lim_{h \to 0} \Hat{\vector{F}}^{(l)}(\Hat{\vector{Y}}) = 
    \matrix{I}_{s+1}^{l+1} \otimes \vector{f}^{(l)}(\vector{y}_0), \quad \text{for } l \geq 1.
\end{equation*}
Due to $\Hat{\matrix{A}} = \begin{bmatrix}
    \matrix{A} & \matrix{0} \\ \vector{b}^\top & 0
\end{bmatrix} \otimes \matrix{I}$, we have 
\begin{equation*}
    \lim_{h \to 0} \Hat{\matrix{A}} \Hat{\mathbf{\vector{F}}}(\Hat{\vector{Y}}) = \left( \begin{bmatrix}
    \matrix{A} & \matrix{0} \\ \vector{b}^\top & 0
\end{bmatrix} \otimes \matrix{I} \right) \left( \vector{1}_{s+1} \otimes \vector{f}|_{h=0} \right) 
= \left( \begin{bmatrix}
    \matrix{A} & \matrix{0} \\ \vector{b}^\top & 0
\end{bmatrix} \vector{1}_{s+1} \right) \otimes \vector{f}|_{h=0},
\end{equation*}
and
\begin{equation*}
    \lim_{h \to 0} \Hat{\matrix{A}} \Hat{\vector{F}}^{(l)}(\Hat{\vector{Y}}) = \left( \begin{bmatrix}
    \matrix{A} & \matrix{0} \\ \vector{b}^\top & 0
\end{bmatrix} \otimes \matrix{I} \right) \left( \matrix{I}_{s+1}^{l+1} \otimes \vector{f}^{(l)}|_{h=0} \right) 
= \left( \begin{bmatrix}
    \matrix{A} & \matrix{0} \\ \vector{b}^\top & 0
\end{bmatrix} \matrix{I}_{s+1}^{l+1} \right) \otimes \vector{f}^{(l)}|_{h=0}.
\end{equation*}
Therefore, the $k^\mathrm{th}$ order derivative of $\Hat{\vector{Y}}$ at $h = 0$ can be computed by
\begin{equation*}
    \Hat{\vector{Y}}^{(k)} \big|_{h=0} = \sum_{|T| = k} \alpha(T) \gamma(T) \Hat{\Phi}(T) \otimes T(\vector{f})|_{h=0},
\end{equation*}
where $\Hat{\Phi}(T) \in \mathbb{R}^{s+1}$ is independent of $\vector{f}$. 

With these results, the $k^\mathrm{th}$ order derivative of numerical solutions $\vector{y}_1$ is
\begin{equation}\label{eqn:RK-final-expansion}
\begin{aligned}
    \vector{y}_1^{(k)} \big|_{h=0} &= ((0,\dots,0,1) \otimes \matrix{I}) \Hat{\vector{Y}}^{(k)}|_{h=0} \\
    &= \sum_{|T| = k} \alpha(T) \gamma(T) \left( (0,\,\dots,\, 0,\, 1)\Hat{\Phi}(T) \right) \otimes T(\vector{f})|_{h=0} \\
    &= \sum_{|T| = k} \alpha(T) \gamma(T) \phi(T)\, T(\vector{f})|_{h=0},
\end{aligned}
\end{equation}
where $\phi(T)$ is a scalar defined as $\phi(T) := (0,\,\dots,\, 0,\, 1)\Hat{\Phi}(T)$. By comparing \eqref{eqn:RK-final-expansion} with \eqref{eqn:ode-expansion-a}, a sufficient condition for an $s$-stage RK scheme to be of order $p$ is immediately obtained:
\begin{equation*}
    \gamma(T) \phi(T) = 1, \quad \forall |T| \leq p.
\end{equation*}
It is also a necessary condition, since the TTNs $T(\vector{f})$ are linearly independent, which is consistent with the independence of elementary differentials \cite[Sec. II.2, page 155]{hairer1987solving}. 
\end{proof}


\subsection{Super convergence of RK methods for a specific function {\it\textbf f}}
\label{subsec:Super convergence of RK methods for a specific function}

In \Cref{theorem:ordercondition}, the proof of the necessary condition relies on the linear independence of the TTNs $T(\vector{f})$. While the linear independence of Butcher trees $T$ (elementary differentials) is established in \cite[Theorem 314A]{butcher2016numerical}, the situation differs when considering TTNs $T(\vector{f})$ for a fixed function $\vector{f}$. Specifically, two distinct trees $T_1\neq T_2$ may produce linearly dependent TTNs, i.e., $T_1(\vector{f})$ and $T_2(\vector{f})$, under certain conditions on $\vector{f}$. For example, if $\vector{f}''(\vector{y})=0$ and both $T_1(\vector{f})$ and $T_2(\vector{f})$ involve the second derivative $\vector{f}''(\vector{y})$, then we have $T_1(\vector{f})=T_2(\vector{f})=0$, indicating linear dependence.

Let us define the linear space $\mathbb{T}_p(\vector{f})$ as: 
\begin{equation}
    \mathbb{T}_p(\vector{f}):=\hbox{span}\left\{ T(\vector{f}),\,
    \forall |T|= p\right\}.
\end{equation}
Assume that $\dim \mathbb{T}_p(\vector{f})=n_p\leq N_p$, where $N_p$ represents the number of all distinct unlabelled trees of order $p$. Let $T_1,$ $\ldots$, $T_{N_p}$ denote all distinct trees of order $p$, and let ${\mathcal T}_{p,\,1}$, $\ldots$, ${\mathcal T}_{p,\,n_p}$ be a basis of $\mathbb{T}_p(\vector{f})$. Then we can write 
\begin{equation}
    (T_1(\vector{f}),\,\ldots,\,T_{N_p}(\vector{f}))= ({\mathcal T}_{p,\,1},\,\ldots,\,{\mathcal T}_{p,\,n_p}){\mathcal M}_p,
\end{equation}
where ${\mathcal M}_p\in\mathbb{R}_{n_p\times N_p}$ is a real matrix of rank $n_p$. Define the row vector:
\begin{equation}
    \vector{\alpha}_p=\left(\alpha(T_1),\,\ldots,\,\alpha(T_{N_p})\right)\in\mathbb{R}^{N_p},
\end{equation}
and the diagonal matrix
\begin{equation}
    \tensor{W}_p=\hbox{diag}\left\{\gamma(T_1)\phi(T_1),\,\ldots,\,\gamma(T_{N_p})\phi(T_{N_p})\right\}\in\mathbb{R}^{{N_p}\times{N_p}}.
\end{equation}

With these notations, we present both the sufficient and necessary conditions for RK methods with a given $\vector{f}$, as stated in the following theorem.

\begin{theorem}
\label{theorem:ordercondition-a}
    RK methods \eqref{RKscheme} have order $p$ if and only if 
    \begin{equation*}
    \vector{\alpha}_j {\mathcal M}^\top_j = \vector{\alpha}_j \tensor{W}_j{\mathcal M}^\top_j, \quad \forall j \leq p.
    \end{equation*}
\end{theorem}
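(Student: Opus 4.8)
The plan is to compare the Taylor expansions of the exact solution and the numerical solution term by term, but now working in the coordinates provided by a basis of $\mathbb{T}_j(\vector{f})$ rather than in the full space spanned by all trees of order $j$. From \Cref{theorem-exact-high-order-derivative} the exact derivative is $\vector{y}^{(j)} = \sum_{|T|=j}\alpha(T)T(\vector{f})$, and from \eqref{eqn:RK-final-expansion} the numerical derivative is $\vector{y}_1^{(j)}|_{h=0} = \sum_{|T|=j}\alpha(T)\gamma(T)\phi(T)T(\vector{f})|_{h=0}$. Writing both sums as row-vector/column-vector products against the tuple $\bigl(T_1(\vector{f}),\dots,T_{N_j}(\vector{f})\bigr)^\top$, the exact derivative equals $\vector{\alpha}_j\bigl(T_1(\vector{f}),\dots,T_{N_j}(\vector{f})\bigr)^\top$ while the numerical one equals $\vector{\alpha}_j\tensor{W}_j\bigl(T_1(\vector{f}),\dots,T_{N_j}(\vector{f})\bigr)^\top$, where $\tensor{W}_j$ is exactly the diagonal matrix of the factors $\gamma(T_i)\phi(T_i)$. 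The standard argument (as in \Cref{theorem:ordercondition}) that the method has order $p$ iff $\vector{y}_1^{(j)}|_{h=0} = \vector{y}^{(j)}|_{h=0}$ for all $j\le p$ still applies, so the order condition becomes $(\vector{\alpha}_j - \vector{\alpha}_j\tensor{W}_j)\bigl(T_1(\vector{f}),\dots,T_{N_j}(\vector{f})\bigr)^\top = 0$ for all $j\le p$.

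Next I would substitute the defining relation $\bigl(T_1(\vector{f}),\dots,T_{N_j}(\vector{f})\bigr) = ({\cal T}_{j,1},\dots,{\cal T}_{j,n_j}){\cal M}_j$, so that $\bigl(T_1(\vector{f}),\dots,T_{N_j}(\vector{f})\bigr)^\top = {\cal M}_j^\top({\cal T}_{j,1},\dots,{\cal T}_{j,n_j})^\top$. The order condition then reads $(\vector{\alpha}_j - \vector{\alpha}_j\tensor{W}_j){\cal M}_j^\top({\cal T}_{j,1},\dots,{\cal T}_{j,n_j})^\top = 0$. Since ${\cal T}_{j,1},\dots,{\cal T}_{j,n_j}$ is by construction a \emph{basis} of $\mathbb{T}_j(\vector{f})$ — hence linearly independent — this vanishes iff the coefficient row vector $(\vector{\alpha}_j - \vector{\alpha}_j\tensor{W}_j){\cal M}_j^\top \in \mathbb{R}^{n_j}$ is zero, i.e. iff $\vector{\alpha}_j{\cal M}_j^\top = \vector{\alpha}_j\tensor{W}_j{\cal M}_j^\top$. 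Running this over all $j\le p$ gives the stated equivalence.

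The main obstacle, and the point that needs to be argued carefully rather than waved through, is the reduction of ``the method has order $p$'' to ``$\vector{y}_1^{(j)}|_{h=0} = \vector{y}^{(j)}|_{h=0}$ for all $j \le p$'' in the present setting. In \Cref{theorem:ordercondition} the ``only if'' direction used genuine linear independence of the $T(\vector{f})$ across all trees; here that fails, so one must instead observe that order $p$ is, by definition, equivalent to agreement of the first $p$ Taylor coefficients of $\vector{y}_1(h)$ and of the exact flow $\vector{y}(h)$ as functions of $h$ — a statement about the vectors $\vector{y}_1^{(j)}|_{h=0}$ and $\vector{y}^{(j)}|_{h=0}$ themselves, not about the tree coefficients. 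Once this is in place, the equality of these vectors is exactly the equality $\vector{\alpha}_j\bigl(T_i(\vector{f})\bigr)^\top = \vector{\alpha}_j\tensor{W}_j\bigl(T_i(\vector{f})\bigr)^\top$, and the rest is the linear-algebra manipulation above. A minor secondary point is to confirm that ${\cal M}_j$ having rank $n_j$ (full row rank) is what guarantees the final "iff": since the map $\vector{v}\mapsto \vector{v}{\cal M}_j^\top$ from $\mathbb{R}^{N_j}$ to $\mathbb{R}^{n_j}$ is surjective but \emph{not} injective, one only gets the reduced condition $\vector{\alpha}_j{\cal M}_j^\top = \vector{\alpha}_j\tensor{W}_j{\cal M}_j^\top$ in $\mathbb{R}^{n_j}$, which is precisely — and only — what the theorem claims, so no stronger conclusion is available or needed.
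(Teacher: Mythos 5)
Your proposal is correct and follows essentially the same route as the paper: both compare the Taylor expansions of the exact and numerical solutions, re-express the tree sums through ${\cal M}_j$ in the basis ${\cal T}_{j,1},\dots,{\cal T}_{j,n_j}$, and invoke the linear independence of that basis to reduce the coefficient identity to $\vector{\alpha}_j{\cal M}_j^\top=\vector{\alpha}_j\tensor{W}_j{\cal M}_j^\top$. Your added remarks on why order $p$ reduces to equality of the Taylor coefficients (rather than of tree coefficients) and on the non-injectivity of $\vector{v}\mapsto\vector{v}{\cal M}_j^\top$ are sound clarifications of points the paper leaves implicit.
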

\begin{proof}
According to \Cref{theorem:taylorexpansionfory}, the Taylor expansion of $\vector{y}(t)$ satisfying the ODEs system \eqref{eqn:ode-system} at $t = 0$ is given by:
\begin{equation}
\label{theorem12:eq:1}
\begin{aligned}
    \vector{y}(t)& = \vector{y}(0) + \sum_{k=1}^{\infty} \frac{1}{k!} t^k \sum_{|T|=k} \alpha(T) \left(T(\vector{f})\right)(\vector{y}(0)),\\
    &=\vector{y}(0) + \sum_{k=1}^{\infty} \frac{1}{k!} t^k\left<\vector{\alpha}_k{\mathcal M}^\top_k,  ({\mathcal T}_{k,\,1},\,\ldots,\,{\mathcal T}_{k,\,n_k})\right>_k,
\end{aligned}
\end{equation}
where $\left<\cdot,  \cdot\right>_k$ denotes the standard inner product in $\mathbb{R}^{n_k}$.

On the other hand, from \eqref{eqn:RK-final-expansion}, the Taylor expansion of numerical solution $\vector{y}_1$ generated by the RK scheme \eqref{RKscheme} at $h = 0$ is:
\begin{equation}
\label{theorem12:eq:2}
\begin{aligned}
    \vector{y}_1(h)& = \vector{y}_0 + \sum_{k=1}^{\infty} \frac{1}{k!} h^k \sum_{|T| = k} \alpha(T) \gamma(T) \phi(T)\, T(\vector{f})|_{h=0},\\
    &=\vector{y}_0 + \sum_{k=1}^{\infty} \frac{1}{k!} h^k\left<\vector{\alpha}_k\tensor{W}_k {\mathcal M}^\top_k,  ({\mathcal T}_{k,\,1},\,\ldots,\,{\mathcal T}_{k,\,n_k})\right>_k.
\end{aligned}
\end{equation}

By comparing \eqref{theorem12:eq:1} and \eqref{theorem12:eq:2}, and using the linear independence of ${\mathcal T}_{k,\,j}$, we conclude that the RK method \eqref{RKscheme} achieves order $p$ if and only if 
    \begin{equation*}
    \vector{\alpha}_j {\mathcal M}^\top_j = \vector{\alpha}_j \tensor{W}_j{\mathcal M}^\top_j, \quad \forall j \leq p.
    \end{equation*}
\end{proof}

It should be noted that the choice of basis does not alter the order conditions stated in \Cref{theorem:ordercondition-a}. Assume that three is another basis ${\mathcal T}'_{j,\,1}$, $\ldots$, ${\mathcal T}'_{j,\,n_j}$ such that
\begin{equation}
    (T_1(\vector{f}),\,\ldots,\,T_{N_j}(\vector{f}))= ({\mathcal T}'_{j,\,1},\,\ldots,\,{\mathcal T}'_{j,\,n_j}){\mathcal M}'_j.
\end{equation}
Then the order conditions derived from \Cref{theorem:ordercondition-a} are:
    \begin{equation}
    \label{orderconditionforanotherbasis}
    \vector{\alpha}_j ({\mathcal M}'_j)^\top = \vector{\alpha}_j \tensor{W}_j({\mathcal M}'_j)^\top, \quad \forall j \leq p.
    \end{equation}
Let matrix $\tensor{G}_j\in\mathbb{R}^{n_j\times n_j}$ be the transition matrix from ${\mathcal T}'_{j,\,1}$, $\ldots$, ${\mathcal T}'_{j,\,n_j}$ to ${\mathcal T}_{j,\,1}$, $\ldots$, ${\mathcal T}_{j,\,n_j}$. Then we have 
\begin{equation}
     ({\mathcal T}'_{j,\,1},\,\ldots,\,{\mathcal T}'_{j,\,n_j}){\mathcal M}'_j=({\mathcal T}_{j,\,1},\,\ldots,\,{\mathcal T}_{j,\,n_j})\tensor{G}_j{\mathcal M}'_j=({\mathcal T}_{j,\,1},\,\ldots,\,{\mathcal T}_{j,\,n_j}){\mathcal M}_j,
\end{equation}
which implies $\tensor{G}_j{\mathcal M}'_j={\mathcal M}_j$. Since the transition matrix $\tensor{G}_j$ is invertible, multiplying both sides of \eqref{orderconditionforanotherbasis} by $\tensor{G}_j^\top$, we obtain:
    \begin{equation}
    \vector{\alpha}_j {\mathcal M}_j^\top = \vector{\alpha}_j \tensor{W}_j{\mathcal M}_j^\top, \quad \forall j \leq p,
    \end{equation}
    which shows that the order conditions stated in \Cref{theorem:ordercondition-a} are invariant under the choice of basis.

For $N_k$ distinct unlabelled trees of order $k$, if the corresponding TTNs $T(\vector{f})$ are linearly independent, they can be chosen as the basis of $\mathbb{T}_k$. In this case, we have ${\mathcal M}_k=I_{N_k}$ and \Cref{theorem:ordercondition-a} becomes equivalent to \Cref{theorem:ordercondition}. 
In general, \Cref{theorem:ordercondition-a} potentially imposes fewer restrictions than \Cref{theorem:ordercondition}. This suggests that an RK scheme may attain the order $p$ for arbitrary functions $\vector{f}$, yet exhibit a higher (superior) convergence order for certain specific choices of $\vector{f}$. We illustrate the existence of such super convergence through an example, which is also discussed in \cite{butcher2016numerical}.

\begin{example}
    Consider a scalar problem 
    \begin{equation}\label{eqn:scalar-ode}
        y'(t) = f(y(t), t),
    \end{equation}
    where both $y(t)$ and $f(y,t)$ are scalar-valued functions. The problem \eqref{eqn:scalar-ode} can be reformulated in autonomous form as: 
    \begin{equation}\label{eqn:scalar-ode-autonomous}
    \frac{\dd }{\dd t}
        \begin{bmatrix}
            y(t) \\ t
        \end{bmatrix}
        = \begin{bmatrix}
            f(y(t), t) \\
            1
        \end{bmatrix}.
    \end{equation}
    Let us denote $\vector{Y} := \begin{bmatrix} y(t) \\ t \end{bmatrix}$ and $\vector{F}(\vector{Y}) := \begin{bmatrix} f(y(t), t) \\ 1 \end{bmatrix}$. According to \Cref{theorem:ordercondition-a}, the bases of the linear spaces $\mathbb{T}_j(\vector{F})$, $j=1,\,\ldots,\,p$, determine the order conditions for applying RK methods to this equation. While the TTNs $T(\vector{F})$ are linearly independent for $p \leq 4$, linear dependencies begin to appear when $p \geq 5$. For example, when $p = 5$, consider the trees $T_1 = \begin{forest} [ [[[]]] [] ] \end{forest}$, and $T_2 = \begin{forest} [ [ [[]][] ] ] \end{forest}$. The corresponding TTNs $ T_1(\vector{F})$ and $ T_2(\vector{F})$  are identical and can be written as:
    \begin{equation}
       T_1(\vector{F})= T_2(\vector{F}):=\begin{bmatrix}
            f_y (f_{yy} f + f_{ty}) (f_y f + f_t) \\ 0
        \end{bmatrix}.
    \end{equation}

    If we choose $f(y(t),t)$ such that  $\dim \mathbb{T}_5(\vector{F})=N_5-1$, then by \Cref{theorem:ordercondition-a}, the order conditions for applying RK methods to ODEs system \eqref{eqn:scalar-ode-autonomous} are:
    \begin{equation*}
        \gamma(T) \phi(T) = 1,\, \forall |T|\leq 5,\,\,\,\hbox{ and  } \,T\neq T_1,\, T\neq T_2,  
    \end{equation*}
    and
    \begin{equation*}
        \alpha(T_1) \gamma(T_1) \phi(T_1) + \alpha(T_2) \gamma(T_2) \phi(T_2) = \alpha(T_1) + \alpha(T_2). 
    \end{equation*}
    This order conditions differ from those required for general vector-valued $\vector{f}$ when $p = 5$, which are given by \Cref{theorem:ordercondition} as:
    \begin{equation*}
        \gamma(T) \phi(T) = 1,\,\,\, \forall |T|\leq 5.   
    \end{equation*}
    
    This property can lead to super convergence of RK methods. In \cite[page 176]{butcher2016numerical}, Butcher present the following RK method which has classical order 4 for general vector-valued functions $\vector{F}$, achieves order 5 when applied to the scalar problem \eqref{eqn:scalar-ode}. The tableau of the given RK method is: 
    \begin{equation*}
    \def\arraystretch{1.5}
    \setlength{\arraycolsep}{10pt}
    \begin{array}{c|cccccc}
    0 & & & & & &  \\
    \frac{1}{2} & \frac{1}{2} & & & & &  \\
    1 & -\frac{9}{4} & \frac{13}{4} & & & &  \\
    \frac{1}{4} & \frac{9}{64} & \frac{5}{32} & -\frac{3}{64} & & &  \\
    \frac{7}{10} & \frac{63}{625} & \frac{259}{2500} & \frac{231}{2500} & \frac{252}{625} & &  \\
    1 & -\frac{27}{50} & -\frac{139}{50} & -\frac{21}{50} & \frac{56}{25} & \frac{5}{2} &  \\
    \hline
    & \frac{1}{14} & 0 & 0 & \frac{32}{81} & \frac{250}{567} & \frac{5}{54}  \\
\end{array}.
\end{equation*}

For the trees $T_1$ and $T_2$, we have
    \begin{equation*}
    \label{rkscheme4order}
    \begin{aligned}
        \alpha(T_1) & =4,\quad\gamma(T_1)=30,\quad \hbox{and}\quad\phi(T_1) = \frac 1 {30}+\frac 3{320},\\
        \alpha(T_2) &=3,\quad\gamma(T_2)=40,\quad \hbox{and}\quad\phi(T_2) = \frac 1 {40}-\frac 3{320},
    \end{aligned}
    \end{equation*}
    which implies that $\gamma(T)\phi(T)=1$ does not hold for $T=T_1$ or $T=T_2$. Therefore, the RK scheme described above is of order four for general two-dimensional vector functions $\vector{F}$, such as:
        \begin{equation}
    \frac{\dd }{\dd t}
        \begin{bmatrix}
            x \\ y
        \end{bmatrix}
        = \begin{bmatrix}
            \frac{x+y}{\sqrt{x^2+y^2}} \\
            \frac{x-y}{\sqrt{x^2+y^2}}
        \end{bmatrix}.
    \end{equation}
    However, for the scalar problem \eqref{eqn:scalar-ode}, the following order condition holds:
        \begin{equation*}
        \begin{aligned}
        \alpha(T_1) \gamma(T_1) \phi(T_1) &+ \alpha(T_2) \gamma(T_2) \phi(T_2) \\
        &= 4 \times 30 \times \left(\frac 1 {30}+\frac 3{320}\right ) +3 \times 40 \times \left(\frac 1 {40}-\frac 3{320}\right )\\&=7=\alpha(T_1) + \alpha(T_2), 
        \end{aligned}
    \end{equation*}
    which indicates that the RK scheme \eqref{rkscheme4order} is of fifth order for the scalar problem.

\end{example}

\subsection{Comparison of our framework and Butcher's method.}
\label{subsec:Comparison of our framework and Butcher's method}
We enclose this section with a summary of the similarities and differences between our approach and Butcher's method \cite{butcher1963coefficients, butcher2016numerical}.

\begin{enumerate}
    \item Both methods utilize the summation of trees to represent the high order derivatives of a vector valued function $\vector{y}$ under certain constraints. A key distinction between our approach and Butcher’s method lies in the treatment of trees. Butcher's method introduces trees through graphs and discusses operations, such as Butcher product, on trees within a ``forest'', where the trees are more akin to abstract algebraic symbols. In our framework, the trees, referred to as TTNs, are formulated purely in terms of tensor operations and tensor derivatives, which are treated as tree tensors. As a result, we can compute the products, derivatives, contractions of trees following the tensor framework. This makes the operations on tree easier to understand and more intuitive. 

    \item Both methods introduce decompositions for trees to derive a recursive approach for computing high order derivatives and the multiplication of high order trees. This decomposition can be derived from the defined growth process of the tree. Butcher's method recursively decomposes a tree into several subtrees. This decomposition is straightforward to follow and understand when treating the tree as a graph, but its connection to derivatives acting on the tree is less clear. To establish a more clear connection between the derivatives and the growth processes of trees, we present two perspectives to understand tree growth. The first perspective is given in \autoref{sec:3.1}, which uses \eqref{eqn:high-order-derivative-path-1} to grow a tree from the root, adding one leaf at a time. This one-leaf growth process corresponds to taking one order of derivative and then immediately substituting the constraint into the new leaf. Based on \eqref{eqn:high-order-derivative-y-general-expression}, the second perspective is provided in \autoref{subsec:Differentiation following layer-wise growth of trees}, where the tree grows layer by layer from the root. This one-layer growth process corresponds to taking derivatives without constrains as much as possible and then substituting the constrains into the leaves. These two approaches offer a clear and systematic way to understand differentiation under algebraic or ODE constraints.   

    \item Since the decomposition of high order trees differs between the two methods, the subsequent strategies for computing the multiplication $\alpha(T)$ of tree $T$ also diverge. In the first approach, we take one order of derivative and then immediately substitute the constraint into the new leaf. This introduces a new concept, namely the differentiation path. We study the differentiation path using the valid labelled trees. Consequently, the multiplication $\alpha(T)$ is equal to the number of valid labelled trees that can be derived from an unlabelled tree $T$. In the second approach of our framework, as discussed in \autoref{subsec:Differentiation following layer-wise growth of trees}, we first take derivatives without constraints as much as possible. The contribution of this process to the multiplication $\alpha(T)$ can be easily counted using the method of undetermined coefficients. Unlike Butcher's approach, these two approaches offer new insights into how differentiation operations contribute to the multiplicity $\alpha(T)$, enriching the theoretical understanding from a fresh perspective. 

    \item In the construction of order conditions for the RK methods, Butcher’s classical approach relies on operations involving rooted trees, while our method utilizes the algebraic framework of TTNs. This tensor-based formulation provides a systematic and structured way to handle derivatives arising from the RK methods, offering the following significant advantages compared with Butcher's method. First, tensors are algebraic entities so that plenty of properties of tensors and operations on tensors can be utilized. In our framework,  the contracted product and the matrix-vector description of the RK method are utilized to provide a concise Taylor expansion of the numerical solution for the RK methods.
Using the Kronecker product and mixed product property of tensors, we decompose the TTNs in Taylor expansion of the numerical solution into two parts. The first part  depends solely on the coefficients matrix $A$ in RK method, while the second part is identical to $T(\vector{f})$. 
With this decomposition, the Taylor expansion is continuously simplified in a summation similar to the Taylor expansions of the exact solution, except for the weights of $T(\vector{f})$. Therefore, the second advantage of our method is that the proof of order conditions is achieved by directly comparing the Taylor expansion of both the exact and numerical solutions, eliminating the need for mathematical induction.
Third, tensor operations provide a structured approach for representing and manipulating derivatives, making it easier to generalize to other numerical methods beyond the standard RK framework. 

\item Both of our framework and Butcher's method yield the same uniform order conditions for RK schemes, meaning that the conditions hold for any vector-valued function $\vector{f}(\vector{y})$, as stated in \Cref{theorem:ordercondition}. However, for certain specific choices of $\vector{f}$, the order conditions proposed in \Cref{theorem:ordercondition} are not strictly necessary, as demonstrated by an example in \autoref{subsec:Super convergence of RK methods for a specific function}. This is due to inherent linear dependencies among the TTNs $T(\vector{f})$. To address this, we remove such redundancies by constructing a basis for the corresponding linear space, leading to sharper order conditions tailored to a given $\vector{f}$. By verifying these refined conditions, we can determine whether a standard RK scheme of order $p$ actually exhibits a higher (superior) convergence order for a specific $\vector{f}$.  
\end{enumerate}


In summary, based on the tensor operators and the clear connection between TTNs and derivatives, our framework can be naturally extended to other RK-type schemes, such as additive RK methods \cite{kennedy2003additive} (e.g. the well-known Implicit-Explicit methods \cite{ascher1997implicit}),  partitioned Runge--Kutta \cite{hairer1981order}, and nonlinearly partitioned Runge--Kutta methods \cite{buvoli2024new, tran2024order}. Similar to the standard RK method, we can also approximate the numerical solutions of these RK methods using a Tayloy expansion, which is a summation of TTNs $T(\vector{f})$ with different weights depending on the coefficient matrix of the corresponding RK method. 
This flexibility highlights the broad applicability of our approach, enabling a unified analysis of various RK-type methods.

\section{Conclusion}

In this work, we have developed an innovative mathematical framework for computing partial and total derivatives of functional TTNs, establishing new theoretical foundations with broad applicability in computational mathematics. Through rigorous analysis of constrained vector-valued functions as representative examples, we have demonstrated both the effectiveness and mathematical rigor of our framework in computing high-order derivatives and Taylor expansions. The tensor-algebraic formulation of these expansions provides a powerful tool that significantly simplifies the derivation of order conditions for RK method. Importantly, our framework admits natural extensions to advanced RK variants, including additive RK, partitioned RK, and nonlinear partitioned RK methods, offering a unified approach to order condition analysis across these related numerical schemes. 
In summary, this TTN-based derivatives offers profound theoretical insights into the mathematical structure of tensor networks while simultaneously providing practical tools for addressing real-world scientific computing challenges.


\bibliographystyle{amsplain}

\end{document}



\section{}
\subsection{}

\begin{theorem}[Optional addition to theorem head]
\end{theorem}

\begin{proof}[Optional replacement proof heading]
\end{proof}

\begin{figure}
\includegraphics{filename}
\caption{text of caption}
\label{}
\end{figure}


\begin{equation}
\end{equation}

\begin{equation*}
\end{equation*}

\begin{align}
  &  \\
  &
\end{align}
